\def\blfootnote{\xdef\@thefnmark{}\@footnotetext}
\newtheorem{theorem}{Theorem}[section]
\newtheorem{lemma}[theorem]{Lemma}
\newtheorem{proposition}[theorem]{Proposition}
\newtheorem{corollary}[theorem]{Corollary}
\theoremstyle{remark}
\let\le=\leqslant
\let\ge=\geqslant
\let\leq=\leqslant
\let\geq=\geqslant
\numberwithin{equation}{section}
\begin{document}

\title[Words and pronilpotent subgroups in profinite groups]{Words and pronilpotent subgroups\\ in profinite groups}

\author{E. I. Khukhro}

\address{Sobolev Institute of Mathematics\\ Novosibirsk, 630\,090,
Russia\\ \emph{Email address:} {khukhro@yahoo.co.uk}}
\author{P. Shumyatsky}
\address{Department of Mathematics\\ University of Brasilia\\ DF~70910-900, Brazil\\
\emph{Email address:} {pavel@unb.br}}

\keywords{profinite groups, locally finite, commutator, pronilpotent, periodic}

\begin{abstract}
Let $w$ be a multilinear commutator word, that is, a commutator of weight $n$ in $n$ different group variables. It is proved that if $G$ is a profinite group in which all pronilpotent subgroups generated by $w$-values are periodic, then the verbal subgroup $w(G)$ is locally finite.
\end{abstract}

\maketitle
\blfootnote{American Mathematical Society 2010 MSC: Primary 20E18; Secondary 20D10, 20D20, 20F50\\ 

This work was supported by CNPq-Brazil. The first author thanks  CNPq-Brazil and the University of Brasilia for support and hospitality that he enjoyed during his visits to Brasilia.}

\section{Introduction}

By a \emph{multilinear commutator word} we mean a commutator of some weight $n$ in $n$ different group variables.
Such words (also known as outer commutator words) 
are obtained by nesting commutators, but using always
different variables. For example, the word $[[[x_1, x_2], [[x_3, x_4], x_5]], x_6]$ is a multilinear
commutator, while the Engel word $[[[x_1, x_2], x_2], x_2]$ is not. For any multilinear commutator word $w$ the law $w=1$ defines a soluble variety of groups. Among well-known multilinear commutator  words there are simple commutators $[x_1, \dots  , x_k]$, for which the corresponding verbal subgroups are the terms of the lower central series. Another distinguished sequence of multilinear commutators is formed by
the derived words
which are defined recursively as  $\delta _0 = x_1$, $\delta _k = [\delta _{k-1}(x_1,\dots  , x_{2^{k-1}}),\, \delta _{k-1}(x_{2^{k-1}+1},\dots  , x_{2^k} )]$; the corresponding verbal subgroups are the terms of the derived series.

The purpose of the present paper is to prove the following theorem.

\begin{theorem}\label{main}
Let $w$ be a multilinear commutator word.
Let $G$ be a profinite group in which all pronilpotent subgroups generated by $w$-values are periodic. Then the verbal subgroup $w(G)$ is locally finite.
\end{theorem}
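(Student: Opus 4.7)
The plan is to show that $w(G)$ is periodic; the conclusion then follows from the theorem of Wilson (based on Zelmanov's solution of the restricted Burnside problem) that a periodic profinite group is locally finite. An immediate consequence of the hypothesis is already that every $w$-value has finite order, since the procyclic closed subgroup $\overline{\langle g\rangle}$ generated by a $w$-value $g$ is pronilpotent and topologically generated by the single $w$-value $g$; by hypothesis it is periodic, and a periodic procyclic profinite group is finite.

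The pro-$p$ case is essentially free. If $G$ is pro-$p$, then $G$ itself is pronilpotent, so $w(G)$ is pronilpotent and generated by $w$-values, hence periodic by hypothesis, hence locally finite by Zelmanov. The same reasoning applies to the verbal subgroup $w(P)$ of any Sylow pro-$p$ subgroup $P$ of $G$, yielding local finiteness of $w(G)$ on each Sylow ``slice'' of $G$.

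The main work is to bootstrap these prime-local conclusions into global periodicity of $w(G)$, which is the principal obstacle: a product of two $w$-values whose $p$-parts live in different primes need not itself be torsion, so a prime-by-prime argument alone does not suffice. A plausible reduction proceeds by induction on the derived-length invariant attached to the multilinear commutator $w$ (since $w$ defines a soluble variety, $w(G)\le G^{(d)}$ for some $d$ depending only on $w$), factoring $G$ in the inductive step by a normal pronilpotent subgroup $N$ such as a Fitting-type radical: the hypothesis passes to $G/N$ because images of $w$-values are again $w$-values, while the kernel $N$ is covered by the pronilpotent case already treated. The stronger content of the hypothesis --- that subgroups generated by several $w$-values, provided they are pronilpotent, are periodic (not merely that each individual $w$-value is torsion) --- must be deployed to handle products: producing sufficiently many pairwise commuting $w$-values that generate pronilpotent subgroups capturing the target products, exploiting the multilinear structure of $w$, is where I expect the technical heart of the argument to lie.
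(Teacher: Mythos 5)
Your opening observations are correct but essentially free: each $w$-value $g$ has finite order because $\overline{\langle g\rangle}$ is a pronilpotent subgroup generated by a $w$-value, and the pronilpotent (in particular pro-$p$) case follows at once from Zelmanov's theorem (it is Zelmanov's theorem, not Wilson's, that a periodic profinite group is locally finite; Wilson supplied the reduction to pro-$p$ groups). Everything after that is a plan rather than a proof, and the plan has two concrete defects. First, the hypothesis of the theorem is not known to be inherited by quotients: if $N$ is a closed normal subgroup and $\bar H\leq G/N$ is a pronilpotent subgroup generated by $w$-values of $G/N$, there is no reason why $\bar H$ should lift to a pronilpotent subgroup of $G$ generated by $w$-values, so the step ``the hypothesis passes to $G/N$ because images of $w$-values are again $w$-values'' does not work. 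The paper avoids exactly this by replacing the hypothesis with membership in the larger classes $\mathscr X$ and $\mathscr Y$ (periodicity of $w(P)$, respectively $W_G(P)$, for Sylow subgroups $P$), whose closure under quotients rests on Lemma~\ref{l-w}(b). Second, a profinite group has no usable Fitting-type radical to factor out: even a prosoluble profinite group can have trivial Fitting subgroup, and the existence of a finite pronilpotent series (finiteness of $ph(G)$) is itself one of the deepest steps of the paper (Proposition~\ref{pi-prosol}), requiring the Hall--Higman-type $p$-length bounds of Theorem~\ref{solu}, Thompson's theorem on fixed points of coprime automorphisms, Baire category arguments, and an intricate construction of infinitely many pairwise commuting finite subgroups of coprime orders on which $w$ does not vanish.

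Moreover, your proposed induction on the derived length attached to $w$ tacitly assumes $G$ is prosoluble. For a general profinite group one must first control the nonabelian simple composition factors; the paper does this via the bound on non-$p$-soluble length (Theorem~\ref{gera}, which uses the classification of finite simple groups), Wilson's structure lemmas, the Feit--Thompson theorem, and Jones's theorem that an infinite family of pairwise non-isomorphic finite simple groups generates the variety of all groups. None of these obstacles is addressed in the proposal, and your closing sentence concedes that the technical heart is missing; as it stands, the argument establishes only the trivial reductions and cannot be completed along the lines sketched without essentially rebuilding the machinery of the paper.
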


Theorem~\ref{main} can be regarded as a generalization of the theorem on local finiteness of periodic profinite groups, which corresponds to the case of $w=x_1$ being a group variable. That theorem was proved in Zelmanov's paper \cite{zel92} on pro-$p$ groups, while the reduction to pro-$p$ groups was obtained by Wilson \cite{wil83} (using the classification of finite simple groups).
The proof of Theorem~\ref{main} also uses the classification and Zelmanov's theorem. If the hypothesis on all pronilpotent subgroups generated by $w$-values being periodic is replaced by being locally finite, then the proof of Theorem~\ref{main} becomes independent of Zelmanov's theorem; in this sense it is a generalization of Wilson's theorem.

The case of Theorem~\ref{main} where $w=[x,y]$ was earlier proved in \cite{dms2}.

Theorem~\ref{main} is related to the conjecture that if, for some group word $w$,  all $w$-values in a profinite group $G$ have finite order, then the  verbal subgroup $w(G)$ must be locally finite (cf. special cases 
stated in \cite{shu01} and similar problems 15.104 and 17.126 in Kourovka Notebook \cite{kour}).  This conjecture is a natural generalization of the aforementioned Zelmanov's theorem on local finiteness of periodic profinite groups. Note that in this conjecture the orders of $w$-values are not assumed to be bounded.  Also  in Theorem~\ref{main} the exponents of pronilpotent subgroups generated by $w$-values are not assumed to be bounded, just like  in the theorems of Wilson and Zelmanov periodic profinite groups are not assumed to be of bounded exponents. There is, however, a well-known 
conjecture that a periodic profinite group must in fact have bounded exponent.

There is a similar conjecture for finite groups, which of course must involve bounds for the exponent: if a finite group $G$ satisfies a law $w^e=1$ for some group word $w$, then the exponent of the verbal subgroup $w(G)$ must be bounded in terms of $w$ and $e$ only.
In relation to this problem it was proved in \cite{austral} that if, for a multilinear commutator $w$,  all nilpotent subgroups in a finite group $G$ satisfy the law $w^e=1$, then the exponent of the verbal subgroup  $w(G)$ is bounded in terms of $w$ and $e$ only. A~similar result was also proved in \cite{dms1} for the case of $w$ being an Engel word and for some other cases.

 The proof of Theorem~\ref{main}  relies on several reductions, some of which are firstly performed for finite groups. In order to state the corresponding results we introduce some notation. As in \cite{khu-shu13} we introduce the \emph{non-$p$-soluble length} $\lambda _p(G)$ of a finite group $G$ as the minimum 
 number of non-$p$-soluble factors in a normal series each of whose factors either is $p$-soluble or is a direct product of non-abelian simple groups of order divisible by~$p$. For $p=2$ we naturally use the term \emph{nonsoluble length} and denote it by $\lambda (G) =\lambda _2(G)$. Bounding the non-$p$-soluble length is an important tool in reductions to (pro-)$p$-soluble groups, for example, implicitly in the Hall--Higman paper \cite{ha-hi}, or in the aforementioned Wilson's paper \cite{wil83}. In this paper we obtain bounds for non-$p$-soluble length using, in particular,  our recent results in \cite{khu-shu13}.

 Throughout the paper, $w$ is a multilinear commutator word of weight~$n$. For a (profinite) group $G$ we denote by $G_w$ the set of all values of $w$ on elements of~$G$. Clearly, $G_w$ is a normal subset of $G$; 
 it is easy to see that $G_w$ is also closed under taking  
 inverses. We denote by $w(G)=\langle G_w\rangle$ the corresponding (closed) verbal subgroup. 
If $P$ is a subgroup of a (profinite)
group $G$, we denote by $W_G(P)$ the (closed) subgroup of $P$ generated by all elements of $P$ that are conjugate in $G$ to $w$-values on elements of $P$, that is, $W_G(P)=\langle P_w^{\;G}\cap P\rangle$. When it causes no confusion, we will write $W(P)$ in place of $W_G(P)$.

\begin{theorem}\label{gera}
Let $H$ be a normal subgroup of a finite group $G$, and let $P$ be a Sylow $p$-subgroup of~$H$. Suppose that for some $t\in G $
 the coset $tW_G(P)$ is of exponent dividing~$p^a$. Then the non-$p$-soluble length of $H$ is bounded in terms of $a$ and $n$ only.
 \end{theorem}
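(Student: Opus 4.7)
The plan is to bound $\lambda _p(H)$ by combining a bound on the exponent of Sylow $p$-subgroups of the non-$p$-soluble chief factors of $H$ with the main result of \cite{khu-shu13}, which bounds $\lambda _p$ of a finite group in terms of the exponent of a Sylow $p$-subgroup of its non-$p$-soluble part. The role of the multilinear commutator hypothesis is to ensure that the subgroup $W_G(P)$, although defined by a restrictive set of generators, has large image in every such non-$p$-soluble chief factor.

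First I would carry out the standard reductions. Since $\lambda _p$ is unchanged on passing to $H/O_{p'}(H)$ and the coset hypothesis on $tW_G(P)$ passes to quotients in which the image of $P$ is still a Sylow $p$-subgroup, one may assume $O_{p'}(H)=1$. Fix a chief factor $M/N\cong S_1\times \dots \times S_k$ of $H$ with the $S_i$ non-abelian finite simple groups of order divisible by~$p$. Using the key fact that $w(S)=S$ for every multilinear commutator word $w$ and every non-abelian finite simple group $S$, one argues that every element of a Sylow $p$-subgroup $\bar P_0$ of $M/N$ is, modulo $N$, a product of $w$-values on elements of $M$; a Sylow conjugation argument inside $H$ shows that these $w$-values can be arranged to lie in $P$, so the image of $W_G(P)$ in $M/N$ contains $\bar P_0$.

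The core analytic step is to convert the coset condition into an exponent bound on $\bar P_0$. For every $u\in W_G(P)$, expanding $(tu)^{p^a}$ and using $t^{p^a}=1$ yields the identity
\[
u\cdot u^{t^{-1}}\cdot u^{t^{-2}}\cdots u^{t^{-(p^a-1)}}=1,
\]
which collapses to $u^{p^a}=1$ whenever $u$ commutes with $t$. Since the set of $w$-values in each $S_i$ is preserved by the full automorphism group of $S_i$, and since $t$ is a $p$-element permuting the $S_i$, one can exhibit a supply of $t$-fixed $w$-values in $\bar P_0$ (for instance by summing along $t$-orbits on $\{S_1,\dots ,S_k\}$ and averaging over the action of $t$ on each individual $S_i$) sufficient to show that $\bar P_0$ has exponent dividing some $p^{f(a,n)}$. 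The theorem of \cite{khu-shu13} then bounds $\lambda _p(H)$ in terms of $a$ and~$n$.

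The main obstacle is the last assertion. Because $t$ need not normalise $W_G(P)$ and may act on each $S_i$ by a nontrivial $p$-automorphism, the subgroup of $\bar P_0$ generated by the $t$-fixed images of $w$-values need not obviously exhaust $\bar P_0$, and one has to use the rich conjugacy properties of $w$-values in finite simple groups (together with the closure of $G_w$ under conjugation and inversion and the surjectivity $w(S_i)=S_i$) to see that the exponent of this $t$-fixed subgroup already dominates the exponent of $\bar P_0$. Overcoming this, rather than the reductions or the invocation of \cite{khu-shu13}, is where the genuine work lies.
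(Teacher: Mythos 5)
There is a genuine gap, and in fact the overall strategy cannot work even if the step you flag as the ``main obstacle'' were overcome. You propose to bound the exponent of the Sylow $p$-subgroups of the non-$p$-soluble chief factors of $H$ and then invoke \cite{khu-shu13}. But the result of \cite{khu-shu13} (Lemma~\ref{t-exp-sol} above, and its variant for varieties of finite exponent) bounds $\lambda_p(G)$ in terms of a variety containing the \emph{whole} Sylow $p$-subgroup of $G$, not merely its images in the non-$p$-soluble chief factors; there is no version of that theorem of the kind you describe, and there cannot be one. In the $m$-fold iterated wreath product $A_5\wr A_5\wr\dots\wr A_5$ every non-soluble chief factor is a direct product of copies of $A_5$, whose Sylow $2$-subgroups are elementary abelian of exponent $2$, yet $\lambda_2=m$ is unbounded. (These groups do not satisfy the hypothesis of Theorem~\ref{gera}, but they show that your intermediate target --- bounded exponent of Sylow $p$-subgroups of the chief factors --- is simply not strong enough to bound $\lambda_p$.) A secondary problem in the same reduction: Lemma~\ref{lbw} supplies $w$-values of $G$ lying in $M$, which need not be $p$-elements, so it does not follow that they can be conjugated into $P$ to show that $W_G(P)$ covers $\bar P_0$; the paper has to work with the much more delicate set $P_w\cap V$.

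The ``core analytic step'' is also not a proof as written: ``averaging over the action of $t$'' has no meaning in a nonabelian group, and when $t$ permutes $S_1,\dots,S_k$ in orbits of length $p^a$ the $t$-fixed points of $\bar P_0$ form a diagonal-type subgroup whose exponent does not control that of $\bar P_0$; you concede this yourself. The paper's route is different and avoids exponent bounds on chief factors altogether. After reducing to $w=\delta_{n+1}$, to $G=H\langle t\rangle$, and to $G$ with no nontrivial normal $p$-soluble subgroup, Proposition~\ref{4} proves a dichotomy for each minimal normal subgroup $V=S_1\times\dots\times S_r$: either $W(P)$ lies in the kernel $K_V$ of the permutation action on $\{S_1,\dots,S_r\}$, or the image of the coset $tW(P)$ in $G/K_V$ has exponent dividing $p^{a-1}$ (this is where the Burnside normal $p$-complement theorem and the computation with $S_1$-projections of $(ux)^{p^a}$ do the real work). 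Intersecting the kernels of the two types gives $K_1,K_2$ with $K_1\cap K_2$ of non-$p$-soluble length at most $1$, $G/K_2$ having a soluble Sylow $p$-subgroup of derived length $n+1$ (so Lemma~\ref{t-exp-sol} applies), and $G/K_1$ handled by induction on $a$. The exponent hypothesis is thus consumed one power of $p$ at a time on the permutation action, rather than converted into an exponent bound inside the simple factors.
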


We also need bounds for the $p$-length of $p$-soluble subgroups.

\begin{theorem}\label{solu}
Let $H$ be a normal $p$-soluble subgroup of a finite group $G$, and let $P$ be a Sylow $p$-subgroup of~$H$. Suppose that for some $t\in G$ the coset $tW_G(P)$ is of exponent dividing $p^a$. Then the $p$-length of $H$ is bounded in terms of $a$ and~$n$.
\end{theorem}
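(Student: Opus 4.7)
My plan is to combine a minimal counterexample argument with Hall--Higman theory, using the multilinear structure of $w$ to translate the single coset exponent condition into an exponent bound on a normal section of a Sylow $p$-subgroup of $H$. I expect the overall flow to parallel the proof of Theorem~\ref{gera}, but with classical Hall--Higman results substituting for the deeper non-$p$-soluble machinery; the fact that $H$ is $p$-soluble means Theorem~\ref{gera} itself gives no direct information, so we genuinely need a separate argument here.

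First I would reduce to the case $O_{p'}(H)=1$. The $p$-length is invariant under quotients by $p'$-subgroups, and $P\cap O_{p'}(H)=1$ ensures that the image of $P$ in $H/O_{p'}(H)$ is a Sylow $p$-subgroup isomorphic to $P$. Both $W_G(P)$ and the coset exponent condition descend to the quotient, since $w$-values map to $w$-values and quotients do not increase exponents. After this reduction, $V:=O_p(H)$ is self-centralizing in $H$, and $H/V$ embeds into $\operatorname{Aut}(V)$.

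Next I would set up a minimal counterexample on $|H|$ and aim at a Hall--Higman-compatible bound. Expanding $(tw)^{p^a}$ as $t^{p^a}\cdot w^{t^{p^a-1}}\cdots w^t\cdot w$ and using $t^{p^a}=1$ yields a ``norm'' identity $\prod_{i=0}^{p^a-1}w^{t^i}=1$ for every $w\in W_G(P)$. Since $w$ is a multilinear commutator of weight $n$, the law $w=1$ defines a soluble variety, so $P^{(m)}\le w(P)\le W_G(P)$ for some $m=m(n)$, and more refined arguments should let one express elements of a suitable normal section of $P$ as products of $w$-values. Combining this with the norm identity, I would aim to show that in the minimal counterexample a specific normal subgroup $L$ of $P$ has exponent bounded in terms of $a$ and $n$. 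The Hall--Higman theorem B then bounds the $p$-length of $H$ by a function of this exponent, contradicting the minimality assumption and closing the induction.

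The main obstacle is this bridge step: converting the single coset exponent hypothesis into a genuine exponent bound on a normal subgroup of $P$. Since $t$ need not normalize $P$, and therefore need not normalize $W_G(P)$, the norm identity above lives in the ambient group $G$ rather than inside $W_G(P)$; this forces either a preparatory step replacing $t$ by a suitable conjugate in $N_G(P)$ or an intrinsic $G$-argument. A secondary difficulty is the verification that the hypotheses survive all the intermediate reductions, since $W_G(P)$ is defined using fusion in the ambient group $G$ and may behave delicately under restriction, quotient, or passage to a subsection.
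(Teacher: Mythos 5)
There is a genuine gap, and it sits exactly where you flagged ``the main obstacle'': your plan is to convert the hypothesis into an \emph{exponent bound on a normal subgroup of $P$} and then invoke the exponent form of Hall--Higman. That bridge cannot be built. A coset $tA$ of exponent $p^a$ puts no bound whatsoever on the exponent of the subgroup $A$: already for $p=2$, $a=1$, the condition $(tx)^2=1$ for all $x\in A$ just says $t$ inverts $A$, which is satisfied by the rotation subgroup of a dihedral $2$-group of arbitrarily large exponent. So no ``specific normal subgroup $L$ of $P$ of exponent bounded in terms of $a$ and $n$'' exists in general, and Theorem B (bounding $p$-length by the exponent of a Sylow $p$-subgroup) is simply not applicable. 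Your norm identity $x^{t^{p^a-1}}\cdots x^t x=1$ is the right starting point, but it must be exploited as a \emph{linear} relation: restricted to a chief $p$-factor $V=L/M$ of $G$ covered by $W_G(P)$ it reads $(h-1)^{p^a-1}=0$ for every $h$ in the coset, i.e.\ a minimal polynomial bound, and the tools that convert this into a $p$-length bound are the sharper representation-theoretic results (Theorems 2.1.1 and 2.1.2 of Hall--Higman, and Hoare's theorem for $p=2$) on exceptional elements, not Theorem B. The paper's Proposition~\ref{p-k} then runs an induction on $a$ by forming the normal closures of the $p^{a-1}$-th powers of elements of $tU$ (and of their commutators), showing these act trivially on the relevant chief factors and contribute boundedly to the $p$-length.

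You are also missing the step that makes the linearization legitimate: one must first prove that $W_G(P)$ \emph{covers} every chief $p$-factor of $G$ in $H$ on which $H$ acts with $p$-length exceeding $n$ (the paper's Lemma~\ref{l-chief}, which uses $P^{(n)}\le w(P)$ via Lemma~\ref{delta}, the Hall--Higman/Bryukhanova derived-length bounds, and Lemma~\ref{lbw} to produce $w$-values inside $L$). This splits $H$ into a quotient $H/D$ of $p$-length at most $n$ and a kernel $D$ all of whose non-central chief $p$-factors are covered by $W_G(P)\cap D$, to which the coset argument applies. Incidentally, your worry about $t$ not normalizing $P$ is a non-issue in this framework: the norm identity is evaluated on chief factors of $G$, which are normal in $G$ and hence $t$-invariant, so everything stays inside $L$. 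Your reduction to $O_{p'}(H)=1$ and the observation $P^{(n)}\le w(P)\le W_G(P)$ are fine as far as they go, but the core of the proof is the covering lemma plus the minimal-polynomial induction, neither of which your outline supplies.
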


The special cases of Theorems~\ref{gera} and~\ref{solu} for $w=x_1$, when $W_G(P)=P$,  were proved in \cite{wil83} for $p\ne 2$; in particular, Theorems~\ref{gera} and~\ref{solu} extend the results of Theorems~2* and 3* in \cite{wil83} to the case of $p=2$.

Apart from applications in the present paper, Theorem~\ref{solu} has a corollary giving an affirmative answer to a special case of Wilson's problem 9.68 in Kourovka Notebook \cite{kour} for a (finite exponent)-by-nilpotent variety.

 \begin{corollary}\label{c-exp-nil}
Let $\frak V$ be a group variety that is a product of  a variety of finite exponent and a nilpotent variety. Then the $p$-length of finite $p$-soluble groups whose Sylow $p$-subgroups belong to $\frak V$ is bounded. In other words, if a Sylow $p$-subgroup $P$ of a finite $p$-soluble group $G$ is such that
$(\gamma _{c+1}(P))^{p^a}=1$, then the $p$-length of $G$ is bounded in terms of $c$ and $a$ only.
 \end{corollary}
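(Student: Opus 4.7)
The plan is to reduce the corollary to Theorem~\ref{solu}. The first ingredient is a standard variety-theoretic reformulation: a group $H$ belongs to the product variety $\mathfrak{B}_e\cdot\mathfrak{N}_c$ (exponent $e$ by nilpotent of class $c$) if and only if $\gamma_{c+1}(H)$ has exponent dividing~$e$; for a finite $p$-group the $p'$-part of $e$ is irrelevant, so the assumption $P\in\mathfrak{V}$ is equivalent to $(\gamma_{c+1}(P))^{p^a}=1$ where $p^a$ is the $p$-part of~$e$. Hence the two formulations in the statement of the corollary are equivalent, and it is enough to prove the explicit ``in other words'' version with parameters $c$ and~$a$.

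To prove that explicit version I apply Theorem~\ref{solu} to the multilinear commutator word $w=\gamma_{c+1}=[x_1,\dots,x_{c+1}]$ of weight $n=c+1$. I take $H=G$, which is legitimate because $G$ is itself $p$-soluble and normal in $G$, and I try the distinguished element $t=1$. If the hypothesis that $1\cdot W_G(P)=W_G(P)$ has exponent dividing $p^a$ can be verified, then Theorem~\ref{solu} gives a bound on the $p$-length of $G$ in terms of $a$ and $n=c+1$, which is a bound in terms of $a$ and~$c$ as required.

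The bulk of the work is therefore to show that $W_G(P)$ has exponent at most $p^a$. Every generator of $W_G(P)=\langle P_w^{\,G}\cap P\rangle$ is of the form $u^g$ with $u\in\gamma_{c+1}(P)$ a $w$-value and $u^g\in P$; since the Sylow $p$-subgroups of $G$ are all conjugate we have $\gamma_{c+1}(P^g)=\gamma_{c+1}(P)^g$ of exponent dividing $p^a$, so every such generator has order dividing~$p^a$. The main obstacle I anticipate is upgrading this from generators to the whole of $W_G(P)$: in a $2$-group in particular, a subgroup generated by involutions can have exponent greater than~$2$ (already visible in a dihedral group $D_8$), so order-$p^a$ generators alone do not suffice. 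I expect to resolve this by exploiting that $W_G(P)$ is contained in $P\cap\langle\gamma_{c+1}(P)^G\rangle$, and using the $p$-soluble structure of $G$ to bound the exponent of the Sylow $p$-part of the normal closure $\langle\gamma_{c+1}(P)^G\rangle$ by~$p^a$; this is the technical heart of the argument. Once the exponent hypothesis is in place, Theorem~\ref{solu} immediately yields the bound on the $p$-length and, together with the variety-theoretic reformulation above, completes the proof of the corollary.
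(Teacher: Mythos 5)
Your reduction is the right one: the paper also applies Theorem~\ref{solu} with $w=[x_1,\dots,x_{c+1}]$, $H=G$, $t=1$, and the whole point is indeed to bound the exponent of $W_G(P)$, not merely the orders of its generators. You correctly identify that obstacle, but your proposed resolution has a genuine gap, and it also overshoots the target. The plan to show that $W_G(P)$ has exponent dividing exactly $p^a$ by bounding the exponent of the Sylow $p$-part of $\langle\gamma_{c+1}(P)^G\rangle$ is circular: that Sylow $p$-part contains $W_G(P)$ and is again only known to be generated by elements of order dividing $p^a$, so you face the same generators-versus-exponent problem for a larger subgroup, and $p$-solubility gives no mechanism for controlling it. There is no reason to expect $P\cap\langle\gamma_{c+1}(P)^G\rangle$ to have exponent $p^a$; as you yourself note with the dihedral example, $p$-groups generated by elements of order $p^a$ can have much larger exponent.

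The missing observation is twofold. First, Theorem~\ref{solu} does not require exponent $p^a$; any exponent $p^{a'}$ with $a'$ bounded in terms of $a$ and $c$ suffices to bound the $p$-length in terms of $a$ and $c$. Second, such a bound is immediate from nilpotency rather than from any normal-closure argument: $\gamma_{c+1}(P)=w(P)\leq W_G(P)$, the quotient $W_G(P)/\gamma_{c+1}(P)$ is a subgroup of the class-$c$ nilpotent group $P/\gamma_{c+1}(P)$ generated by images of your order-$p^a$ generators, and a nilpotent $p$-group of class $c$ generated by elements of order dividing $p^a$ has exponent dividing $p^{ac}$. Combined with $(\gamma_{c+1}(P))^{p^a}=1$ this gives $W_G(P)$ exponent dividing $p^{ac+a}$, which is exactly what the paper feeds into Theorem~\ref{solu}. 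With this substitution your argument closes; without it, the technical heart you defer to is not established and, as formulated, would not go through.
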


A result similar to Corollary~\ref{c-exp-nil} is also true for a nilpotent-by-(finite exponent) variety --- this case has been known in folklore as a consequence of the Hall--Higman theorems.

 We call pro-(finite soluble) groups, that is, inverse limits of finite soluble groups, simply \emph{prosoluble} groups for short; \emph{pro-$p$-soluble} groups are defined similarly. For a profinite group $G$ we denote
by $\pi (G)$ the set of prime divisors of the orders of elements of $G$ (understood as supernatural, or Steinitz, numbers). If a profinite group $G$ has $\pi (G)=\pi$, then we say that $G$ is a pro-$\pi$ group. Recall that Sylow theorems hold for $p$-Sylow subgroups of a profinite group (see, for example, \cite[Ch.~2]{wil-book}). When dealing with profinite groups we consider only continuous homomorphisms and quotients by closed normal subgroups.

Theorem~\ref{gera} has the following consequence for profinite groups.

\begin{theorem}\label{profi} Let $G$ be a profinite group, and $P$ a $p$-Sylow subgroup of~$G$. Suppose that $W_G(P)$ is locally finite. Then $G$ has a finite series of closed characteristic  subgroups in which each factor either is pro-$p$-soluble or is isomorphic to a Cartesian product of non-abelian finite simple groups of order divisible by~$p$.
\end{theorem}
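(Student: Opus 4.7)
My plan is to apply Theorem~\ref{gera} in every finite quotient $G/N$ of $G$ and assemble the resulting uniform bound on the non-$p$-soluble length into the desired characteristic series on $G$ by an inverse-limit argument. To use Theorem~\ref{gera} we first need an exponent bound: $W_G(P)$ is a closed subgroup of the pro-$p$ group $P$ and so is itself pro-$p$, and being locally finite it is periodic; by (the consequence of) Zelmanov's theorem a periodic pro-$p$ group has bounded exponent, so $W_G(P)$ has exponent dividing some $p^a$.

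The key technical step is the identity
\[
 W_{G/N}(PN/N) \;=\; W_G(P)\cdot N/N
\]
for every open normal subgroup $N\trianglelefteq G$. The inclusion $\supseteq$ is immediate. For the reverse, set $\bar P = PN/N$ and take $\bar z\in \bar P_w^{G/N}\cap \bar P$, so $\bar z = \bar u^{\bar h}$ with $\bar u \in \bar P_w$ and $\bar h\in G/N$. Lift to $u = w(p_1,\dots,p_n)\in P_w$ (with $p_i\in P$) and $h\in G$; then $u^h\in P_w^G\cap PN$ is a $p$-element. Since $P$ is a Sylow $p$-subgroup of $PN$, some $y\in PN$ conjugates $u^h$ into $P$, giving $(u^h)^y\in P\cap P_w^G\subseteq W_G(P)$ with image $\bar z^{\bar y}$, $\bar y\in\bar P$. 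The set $P_w^G\cap P$ is closed under $P$-conjugation, so $W_G(P)$ is normal in $P$, and hence $W_G(P)\cdot N/N$ is normal in $\bar P$; therefore $\bar z$ itself lies in $W_G(P)\cdot N/N$. Consequently $W_{G/N}(PN/N)$ has exponent dividing $p^a$, and Theorem~\ref{gera} applied to $G/N$ with $H=G/N$ and $t=1$ yields a bound $\lambda_p(G/N)\leq \ell(a,n)$ that is independent of~$N$.

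Finally, to build the series on $G$, for each $N$ construct the canonical characteristic series in the finite group $G/N$ by alternately extracting the $p$-soluble radical and then the socle of the resulting quotient: in a finite group with trivial $p$-soluble radical the socle is a direct product of non-abelian simple groups, and since $O_{p'}$ also vanishes each such simple factor has order divisible by~$p$. These characteristic subgroups are compatible with the quotient maps in the inverse system $\{G/N\}$, so their inverse limits define a finite series of closed characteristic subgroups of $G$, of length at most $2\ell(a,n)+1$, whose successive factors are alternately pro-$p$-soluble and Cartesian products of non-abelian finite simple groups of order divisible by~$p$. The main obstacle is the displayed identity: reconciling $W_{G/N}(PN/N)$ with the image of $W_G(P)$ requires both a Sylow conjugation inside $PN$ and the $P$-normality of $W_G(P)$; once that is in hand, Zelmanov's theorem, Theorem~\ref{gera}, and routine inverse-limit bookkeeping complete the proof.
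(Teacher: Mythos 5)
There is a genuine gap at the very first step. You claim that since $W_G(P)$ is a periodic pro-$p$ group, it has bounded exponent ``by (the consequence of) Zelmanov's theorem.'' Zelmanov's theorem gives local finiteness of periodic pro-$p$ groups, not bounded exponent; whether a periodic (equivalently, locally finite) profinite group must have finite exponent is precisely the open conjecture mentioned in the Introduction of this paper, and it is not known even for pro-$p$ groups. Everything downstream depends on this uniform $p^a$, so the argument as written does not go through. The paper circumvents exactly this difficulty with Lemma~\ref{l-coset}: writing $W(P)=\bigcup_e S_e$ with $S_e=\{g\in W(P)\mid g^e=1\}$ closed, Baire's category theorem yields only a \emph{coset} $t(H\cap W(P))$ of finite exponent for some open normal subgroup $H$ of $G$ and some $t\in W(P)$. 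This is the reason Theorem~\ref{gera} is formulated for a normal subgroup $H$ and a coset $tW_G(P)$ rather than for $W_G(P)$ itself: one applies it in each finite quotient $\bar G$ to the image of $H$, bounds $\lambda_p(\bar H)$ in terms of $a$ and $n$, and then uses the finiteness of $|G/H|$ to bound the length of the series in $\bar G$. Your simplified application with $H=G/N$ and $t=1$ is exactly what the coset trick is designed to avoid having to justify.

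Two smaller remarks. The identity $W_{G/N}(PN/N)=W_G(P)N/N$, which you present as the key technical step, is Lemma~\ref{l-w}(b) of the paper, proved by the same Sylow-conjugation argument you give (and in fact full equality holds, so the normality-in-$P$ detour is unnecessary). Finally, for the assembly step the paper invokes Wilson's Lemma~\ref{l-wil2} on classes closed under normal subgroups and subdirect products; your proposal to take socles and $p$-soluble radicals in each finite quotient and pass to inverse limits is not automatically compatible with the quotient maps (the image of a socle need not be the socle of the image), so that step also needs the cited lemma or an equivalent argument.
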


In view of Zelmanov's theorem \cite{zel92} we could write that $W_G(P)$ is periodic instead of locally finite; this remarks applies throughout the paper.

Theorem~\ref{solu} has the following consequence for profinite groups.

\begin{theorem}\label{pro-p-solu}
Let $p$ be a prime and let $G$ be a pro-$p$-soluble group. Suppose that $W_G(P)$ is locally finite for a $p$-Sylow subgroup $P$ of~$G$. Then $G$ has a finite series of closed characteristic subgroups in which each factor is either a pro-$p$ group or a pro-$p'$ group.
\end{theorem}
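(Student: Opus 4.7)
The plan is to reduce Theorem~\ref{pro-p-solu} to a uniform bound on the $p$-length $l_p(G/N)$ across all open normal $N\trianglelefteq G$. Such a bound $l_p(G/N)\le C$ forces the ascending series $Q_0=1$, $Q_1=O_{p'}(G)$, $Q_2=$ preimage of $O_p(G/Q_1)$, and so on, to stabilize at $G$ in at most $2C+1$ steps, giving the desired finite characteristic series with alternating pro-$p'$ and pro-$p$ factors. We produce the uniform bound by applying Theorem~\ref{solu} inside each finite quotient, using two ingredients.

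The first ingredient is a continuity identity: for every closed subgroup $P'\le G$ that is a Sylow $p$-subgroup of $P'N$ and every open normal $N\trianglelefteq G$, the image of $W_G(P')$ in $G/N$ equals $W_{G/N}(P'N/N)$. To prove the nontrivial inclusion, take $\bar p=\bar g\bar v\bar g^{-1}\in (P'N/N)\cap (P'N/N)_w^{G/N}$ with $\bar v$ a $w$-value on $P'N/N$; lift $\bar v$ to $v\in (P')_w$ and $\bar g$ to $g\in G$, so that $gvg^{-1}$ is a $p$-element of $P'N$. Sylow's theorem inside $P'N$ provides $x\in P'N$ with $x^{-1}(gvg^{-1})x\in P'$, yielding an element of $(P')_w^G\cap P'$ whose image in $G/N$ is $\bar x^{-1}\bar p\,\bar x$, a $P'N/N$-conjugate of $\bar p$. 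Since $w$ is a commutator word, $(P')_w^G\cap P'$ is invariant under $P'$-conjugation, whence the image of $W_G(P')$ is normalized by $P'N/N$ and $\bar p$ itself lies there.

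The second ingredient is a Baire category argument on $W_G(P)$. Because $W_G(P)$ is locally finite and contained in the pro-$p$ group $P$, we may write $W_G(P)=\bigcup_{a\ge 0}X_{p^a}$ with $X_{p^a}=\{g\in W_G(P):g^{p^a}=1\}$ closed. By the Baire theorem some $X_{p^a}$ has nonempty interior in $W_G(P)$, yielding $t\in W_G(P)$, an open normal $V\trianglelefteq G$, and $a\ge 0$ such that $(tu)^{p^a}=1$ for every $u\in W_G(P)\cap V$. The easy inclusion $W_G(P\cap V)\subseteq W_G(P)\cap V$ shows that the same bound holds for every $u\in W_G(P\cap V)$.

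Combining the two, for every open normal $N\le V$ the continuity identity applied to $P'=P\cap V$ (a Sylow $p$-subgroup of $V$, hence of $P'N$) identifies $W_{G/N}((P\cap V)N/N)$ with the image of $W_G(P\cap V)$, so the Baire bound produces a coset $\bar t\cdot W_{G/N}((P\cap V)N/N)$ of exponent dividing $p^a$ in $G/N$. Theorem~\ref{solu} applied inside $G/N$ with $H=V/N$ then gives $l_p(V/N)\le f(a,n)$, and combining this with the fixed constant $l_p(G/V)$ via $l_p(G/N)\le l_p(G/(N\cap V))\le l_p(V/(N\cap V))+l_p(G/V)$ yields the required uniform bound. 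The principal obstacle is the continuity identity: without it, the image in $G/N$ of the Baire-produced bounded-exponent coset sits a priori only inside the corresponding $W$-subgroup, which is too weak to trigger Theorem~\ref{solu}; the Sylow-lift trick inside $P'N$ is precisely what secures the required equality.
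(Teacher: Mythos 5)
Your proposal is correct and follows essentially the same route as the paper: your ``continuity identity'' is Lemma~\ref{l-w}(b), your Baire category step is Lemma~\ref{l-coset}, and the reduction to Theorem~\ref{solu} inside finite quotients via the coset $\bar t\,W_{G/N}(\overline{P\cap V})\subseteq\overline{t(W_G(P)\cap V)}$ is exactly the paper's argument. The only cosmetic difference is at the end, where you extract the characteristic series from the uniform bound on $p$-length via the upper $p$-series of $G$ rather than by citing the inverse-limit statement of Lemma~\ref{l-wil2}; note that justifying the stabilization of that series at the profinite level is precisely the content of that lemma, so you are not avoiding it.
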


If $G$ is a finite soluble group, let $h(G)$ denote the Fitting height of~$G$. Recall that this is the length of a shortest normal (or characteristic) series of $G$ all of whose factors are nilpotent. For a prosoluble group $G$ we define an analogue of Fitting height denoted by $ph(G)$ as the length of a shortest series of (closed) characteristic subgroups all of whose factors are pronilpotent; if $G$ has no finite series of this kind, then we write $ph(G)=\infty$. Of course, $ph(G)<\infty$ if and only if $G$ is an inverse limit of finite soluble groups of bounded Fitting height.

\begin{corollary}\label{prosolu}
Let $G$ be a prosoluble group such that $\pi(G)$ is finite. Suppose that $W_G(P)$ is locally finite for any $p$-Sylow subgroup of $G$, for any $p\in\pi(G)$. Then $ph(G)$ is finite.
\end{corollary}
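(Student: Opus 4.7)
The idea is to apply Theorem~\ref{pro-p-solu} once for each prime in $\pi(G)$ and then splice the resulting characteristic series together via a Zassenhaus-type common refinement.

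Since $G$ is prosoluble it is pro-$p$-soluble for every prime $p$, and by hypothesis $W_G(P)$ is locally finite for every Sylow $p$-subgroup $P$ of $G$ at each prime $p_i$ in the finite set $\pi(G)=\{p_1,\dots,p_k\}$. Theorem~\ref{pro-p-solu} applied with $p=p_i$ therefore yields, for every $i\in\{1,\dots,k\}$, a finite series of closed characteristic subgroups
\[
\Sigma^{(i)}:\quad 1=A^{(i)}_0\leq A^{(i)}_1\leq\cdots\leq A^{(i)}_{m_i}=G
\]
in which every factor is either a pro-$p_i$ group or a pro-$p_i'$ group.

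Form the common refinement of these $k$ series by iteratively inserting, between each consecutive pair $U\leq V$ of the current series, the subgroups $U\cdot(V\cap A^{(i)}_j)$ for $j=0,\dots,m_i$; this stays within the lattice of closed characteristic subgroups of $G$, as the product of a closed normal subgroup with a closed subgroup is closed and the operations involved preserve characteristicity. The Zassenhaus lemma then ensures that every factor of the resulting series $1=B_0\leq\cdots\leq B_N=G$ (with $N\leq\prod_i m_i$) is simultaneously a subquotient of a factor of each $\Sigma^{(i)}$, and hence is, for every $i$, either pro-$p_i$ or pro-$p_i'$ (subquotients of pro-$p$, resp.\ pro-$p'$, groups being of the same type).

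Finally, if a factor $F=B_{\ell+1}/B_\ell$ is nontrivial and $p_j\in\pi(F)\subseteq\pi(G)$, then $F$ cannot be pro-$p_j'$, so it is pro-$p_j$ and $\pi(F)=\{p_j\}$; in particular $F$ is a pro-$p_j$ group and thus pronilpotent. The series $(B_\ell)$ is therefore a pronilpotent series for $G$, giving $ph(G)\leq N<\infty$. The only step requiring care is the refinement bookkeeping in the profinite setting, but this is routine and poses no real obstacle; note in particular that attempting a direct induction on $|\pi(G)|$ would be more delicate, since the hypothesis on $W_G(P)$ does not transparently descend to quotients, which is precisely why the refinement approach is preferable.
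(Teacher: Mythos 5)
Your proof is correct, but it takes a genuinely different route from the paper. The paper fixes a single prime $p\in\pi(G)$, applies Theorem~\ref{pro-p-solu} once to get a characteristic series with pro-$p$ and pro-$p'$ factors, observes that each pro-$p'$ factor has strictly smaller prime set, and inducts on $|\pi(G)|$; the point you flag as delicate --- descent of the hypothesis to subgroups and quotients --- is exactly what Lemma~\ref{l-w} is there for (part (a) handles passage to the Sylow subgroups of a closed subgroup, part (b) shows $W_{G/N}(PN/N)=W_G(P)N/N$ is a quotient of a locally finite group), so the paper's induction is in fact unproblematic. You instead apply Theorem~\ref{pro-p-solu} simultaneously for all $p_1,\dots,p_k$ and take a Schreier--Zassenhaus common refinement of the $k$ resulting characteristic series; since each factor of the refinement is a section of a factor of every $\Sigma^{(i)}$, it is for each $i$ either pro-$p_i$ or pro-$p_i'$, and a nontrivial such factor is then pro-$p_j$ for the unique $p_j$ in its prime set, hence pronilpotent. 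Your verification that the refinement stays inside closed characteristic subgroups is sound (products of closed subgroups in a compact group are closed, and the lattice operations preserve characteristicity), and the topological isomorphisms in the butterfly lemma cause no trouble for profinite groups. What your approach buys is that it bypasses any appeal to hypothesis inheritance and yields the explicit bound $ph(G)\leq\prod_i m_i$ in one pass; what the paper's approach buys is brevity, since Lemma~\ref{l-w} already does the inheritance work and the induction on $|\pi(G)|$ is then a one-line argument.
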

\medskip

\section{Bounding non-$p$-soluble length of finite groups}
\label{s-nsl}

Our purpose in this section is to prove Theorem~\ref{gera}. We begin with quoting  a
special case of our recent result on non-$p$-soluble length in \cite{khu-shu13}. The definitions of non-$p$-soluble length $\lambda_p (G)$ and nonsoluble length $\lambda (G)= \lambda_2 (G)$ of a finite group $G$ were given in the Introduction.

 \begin{lemma}[see {\cite[Theorem~1.4]{khu-shu13}}]\label{t-exp-sol}
 Suppose that a Sylow $p$-subgroup of a finite group $G$ is soluble of derived length~$d$. Then the non-$p$-soluble length $\lambda _p(G)$ is bounded in terms of~$d$.
 \end{lemma}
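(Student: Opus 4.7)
The plan is to proceed by induction on $d$, producing a function $f(d)$ such that $\lambda_p(G)\le f(d)$ whenever a Sylow $p$-subgroup of $G$ has derived length at most $d$. The base case $d=0$ is immediate: a trivial Sylow $p$-subgroup forces $p\nmid |G|$, so $G$ is a $p'$-group, hence $p$-soluble, giving $\lambda_p(G)=0$.

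For the inductive step, I would take a counterexample $G$ of minimal order whose Sylow $p$-subgroup $P$ has derived length exactly $d$. Standard reductions allow me to assume $O_{p',p}(G)=1$: if a nontrivial normal $p$-soluble subgroup exists, passing to the quotient leaves a Sylow $p$-subgroup which is an image of $P$ (hence of derived length at most $d$) and reduces $\lambda_p$ by at most one. Under this assumption the socle of $G$ is a direct product $L=S_1\times\cdots\times S_k$ of non-abelian simple groups, and since $O_{p'}(G)=1$, each $S_i$ has order divisible by $p$. A generalized Fitting argument shows $C_G(L)=1$, so $G\hookrightarrow \mathrm{Aut}(L)$.

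The heart of the argument is to exhibit a section of $G$ to which induction applies with parameter $d-1$, yielding $\lambda_p(G)\le f(d-1)+c$ for a constant $c$. Schreier's conjecture (confirmed via the classification of finite simple groups) says that each $\mathrm{Out}(S_i)$ is soluble of bounded derived length, so the Sylow $p$-subgroup of $G/L$ sits inside a $p$-group whose derived length is controlled in terms of that of a Sylow $p$-subgroup of $L$ together with an absolute additive constant. The inductive step then amounts to showing that when one passes through the non-$p$-soluble layer $L$, the effective derived length available for the rest of $G$ decreases by at least one, so that $G/L$ falls under the inductive hypothesis.

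The main obstacle is making this derived-length decrement quantitative: one must rule out, in a controlled way, that many non-$p$-soluble layers can be absorbed by a Sylow $p$-subgroup of small derived length. This is genuinely delicate, since a Sylow $p$-subgroup of a single simple group of $p$-divisible order may already be abelian — for example $\mathrm{PSL}_2(q)$ with $p\mid q-1$. The resolution requires detailed structural information about Sylow $p$-subgroups of finite simple groups case-by-case (Lie type, alternating, sporadic), together with the fact that the outer $p$-part acting on the inner Sylow must be controlled through Schreier-type bounds. Iterating the reduction layer by layer then closes the induction and yields an explicit (indeed, essentially linear) bound $f(d)$.
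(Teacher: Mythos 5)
This lemma is not proved in the paper at all: it is imported verbatim as a special case of \cite[Theorem~1.4]{khu-shu13}, so your sketch has to be judged against that external proof. Your opening moves are unobjectionable — the base case, factoring out the $p$-soluble radical, observing that then the socle $L=S_1\times\cdots\times S_k$ is a product of non-abelian simple groups of order divisible by $p$ with $C_G(L)=1$. But the argument stops exactly where the theorem begins. The entire content of the result is the claim that passing through one non-$p$-soluble layer costs a quantifiable amount of derived length of $P$, and you do not prove this; you name the obstruction yourself (a simple group of order divisible by $p$ can have abelian Sylow $p$-subgroups, e.g.\ $\mathrm{PSL}_2(q)$ with $p\mid q-1$) and then defer to an unspecified ``case-by-case'' analysis of Sylow $p$-subgroups of simple groups. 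That is not a proof, and it is also not how the cited proof works. A secondary but genuine error: the statement that, by Schreier, a Sylow $p$-subgroup of $G/L$ is controlled by that of $L$ ``together with an absolute additive constant'' is false, because $G/L$ embeds into $\mathrm{Out}(S_1)\wr\mathrm{Sym}(k)$ and the symmetric-group factor has Sylow $p$-subgroups of unbounded derived length as $k\to\infty$; the permutation part is precisely what cannot be dismissed.

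The actual argument of \cite{khu-shu13} avoids any case-by-case study of Sylow subgroups of simple groups. Its engine is the same commutator trick that appears in Proposition~\ref{4} of this paper: if a $p$-element $g$ moves a simple factor $S_j$ of a layer, then for $b\in P\cap S_j$ the commutator $[b,g]=b^{-1}b^g$ has components in distinct factors, and projecting shows that the whole Sylow $p$-subgroup of $S_j$ is generated by projections of commutators of elements of $P$; thus derived length is consumed by the \emph{permutation action} on the factors, not by the internal structure of the $S_i$. Combined with Schreier's conjecture (so that when no $p$-element moves the factors one is essentially inside a product of almost simple groups with soluble outer parts), this lets one bound $\lambda_p(G)$ in terms of the maximum $p$-length of $p$-soluble subgroups of $G$; the Hall--Higman theorems \cite{ha-hi} (and Bryukhanova \cite{bry81} for $p=2$) then convert the hypothesis that $P$ has derived length $d$ into a bound on that $p$-length. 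Without some version of this mechanism — or a genuinely carried-out classification argument — your induction does not close, so the proposal has a real gap at its central step.
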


 (In fact, Theorem~1.4 in \cite{khu-shu13} gives a bound for $\lambda _p(G)$ for finite groups $G$ whose Sylow $p$-subgroups belong to any given variety that is a product of soluble varieties and varieties of finite exponent, but we need only this special case here.)

By an \emph{almost simple group} we mean a finite group with a unique minimal normal subgroup that is a non-abelian simple group. By Schreier's conjecture confirmed by the classification, the quotient by that minimal normal subgroup is soluble. The following lemma is well known.

\begin{lemma}\label{l-alm-sim}
Suppose that a finite group $G$ has no nontrivial soluble normal subgroups. Let $M$  be the product of all minimal normal subgroups, and let $M=S_1\times\dots\times S_m$, where the $S_i$ are non-abelian simple groups. Then the kernel $K=\bigcap N_G(S_i)$ of the permutational action of $G$ on the set $\{ S_1, \dots , S_m\}$ embeds in a direct product of almost simple groups.
\end{lemma}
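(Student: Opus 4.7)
The plan is to exhibit an explicit embedding $K \hookrightarrow A_1 \times \cdots \times A_m$, where $A_i$ is the image of $K$ in $\operatorname{Aut}(S_i)$ under conjugation, and then verify that each $A_i$ is almost simple and that the product map is injective.

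First I would observe that since $G$ has no nontrivial soluble normal subgroup, it has no nontrivial abelian normal subgroup; hence every minimal normal subgroup of $G$ is a direct product of isomorphic non-abelian simple groups, and $M$ coincides with the socle of $G$. In particular $Z(M)=1$. Each factor $S_i$ is normal in $M$, and because distinct $S_j$'s commute, $M\le K$; thus each $S_i$ sits inside $K$.

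Next I would define, for each $i$, the conjugation map $\varphi_i\colon K\to\operatorname{Aut}(S_i)$ and set $A_i=\varphi_i(K)$. Since $S_i\subseteq K$ and $S_i$ acts on itself by inner automorphisms, $A_i$ contains the subgroup $\operatorname{Inn}(S_i)\cong S_i$, which is the unique minimal normal subgroup of $\operatorname{Aut}(S_i)$. Any subgroup of $\operatorname{Aut}(S_i)$ containing $\operatorname{Inn}(S_i)$ is almost simple, so $A_i$ is almost simple.

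Finally I would look at the product homomorphism $\varphi=(\varphi_1,\ldots,\varphi_m)\colon K\to A_1\times\cdots\times A_m$. Its kernel is $\bigcap_i C_K(S_i)=C_K(M)=C_G(M)\cap K$. The key point is $C_G(M)=1$: indeed $C_G(M)$ is normal in $G$, and if it were nontrivial it would contain some minimal normal subgroup of $G$, which by definition of $M$ lies in $M$, forcing a nontrivial element of $C_G(M)\cap M=Z(M)=1$. Hence $\varphi$ is injective, giving the desired embedding of $K$ into a direct product of almost simple groups. The only mild subtlety is the observation that $C_G(M)=1$; everything else is a direct unwinding of the definitions.
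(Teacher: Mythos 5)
Your proof is correct and follows essentially the same route as the paper: identify $K/C_K(S_i)$ (your $A_i\le\operatorname{Aut}(S_i)$) as almost simple, show $\bigcap_i C_K(S_i)=1$ using the absence of soluble normal subgroups, and embed $K$ in the product of the quotients $K/C_K(S_i)$. You merely spell out the two steps the paper leaves as "clearly", and both justifications (via $\operatorname{Inn}(S_i)$ and via $C_G(M)\cap M=Z(M)=1$) are sound.
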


\begin{proof}
Clearly, $K/C_K(S_i)$ is an almost simple group for every~$i$. We have $\bigcap C_K(S_i)=1$ because $G$ has no nontrivial soluble normal subgroups. The result follows, since $K/\bigcap C_K(S_i)$ embeds
in the direct product of the groups $K/C_K(S_i)$.
\end{proof}

We now list three elementary lemmas
 that are used throughout the paper.

\begin{lemma}\label{l-w} Let $G$ be a (pro)finite group.

{\rm (a)} If $G_1\leq G$ and $P_1\leq P$, then $W_{G_1}(P_1)\leq W_G(P)$.

{\rm (b)} If $P$ is a $p$-Sylow subgroup of $G$ and $N$ is a (closed) normal subgroup of $G$, then $W_{G/N}(PN/N)= W_G(P)N/N$.
\end{lemma}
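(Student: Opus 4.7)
The plan is to verify the two inclusions directly from the definition $W_G(P)=\langle P_w^{\;G}\cap P\rangle$, with part (a) being a near-triviality and part (b) requiring a short Sylow-style adjustment for one of the two containments.

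For part (a), I would simply observe that $P_1\le P$ forces $(P_1)_w\subseteq P_w$, and $G_1\le G$ forces $(P_1)_w^{G_1}\subseteq P_w^{G}$. Intersecting with $P_1\subseteq P$ yields
\[
(P_1)_w^{G_1}\cap P_1\;\subseteq\; P_w^{G}\cap P,
\]
and taking the (closed) subgroup generated gives $W_{G_1}(P_1)\le W_G(P)$.

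For part (b), the inclusion $W_G(P)N/N\le W_{G/N}(PN/N)$ is obtained by pushing generators through the quotient map: if $p\in P_w^{G}\cap P$ with $p=y^g$ and $y=w(q_1,\dots,q_n)$, $q_i\in P$, then $\bar p=\bar y^{\bar g}$ where $\bar y=w(\bar q_1,\dots,\bar q_n)\in(PN/N)_w$ and $\bar p\in PN/N$, so $\bar p\in(PN/N)_w^{G/N}\cap(PN/N)\subseteq W_{G/N}(PN/N)$. The reverse inclusion is the one requiring attention. Starting from $\bar x\in(PN/N)_w^{G/N}\cap(PN/N)$, I would lift the $w$-value entries from $PN/N$ to $P$ (every coset in $PN/N$ has a representative in $P$) to obtain $y_0=w(p_1,\dots,p_n)\in P_w$ with $\bar x=\overline{y_0^{\,g}}$ for some $g\in G$; the coset condition $\bar x\in PN/N$ becomes $y_0^g\in PN$. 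Now $y_0$ is a $p$-element of $P$, hence $y_0^g$ is a $p$-element of $PN$; since $P$ is a Sylow $p$-subgroup of $PN$ and Sylow's theorem applies in $PN$ (in the profinite case one uses the profinite Sylow theorems cited from \cite{wil-book}), one can write $y_0^g\in P^n$ for some $n\in N$. Thus $z:=y_0^{gn^{-1}}\in P_w^{G}\cap P\subseteq W_G(P)$, and $y_0^g=z\cdot[z,n]$ with $[z,n]\in[G,N]\le N$, so $y_0^g\in W_G(P)N$ and $\bar x\in W_G(P)N/N$.

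The main obstacle is purely the second inclusion of part (b): on the face of it $W_G(P)$ is not $G$-normal, so one cannot just conjugate generators. The Sylow adjustment $y_0^g\mapsto y_0^{gn^{-1}}\in P$ together with $[G,N]\le N$ is exactly what makes the identity hold modulo $N$. Everything else is formal, and the profinite case adds only the standard caveats that subgroups generated are taken closed and Sylow theory is used in its profinite form.
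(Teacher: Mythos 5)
Your proof is correct and follows essentially the same route as the paper's: part (a) by containment of generating sets, the easy inclusion of (b) by pushing generators through the quotient, and the reverse inclusion by lifting the entries of a $w$-value to $P$ and then conjugating by an element of $N$ (the Sylow adjustment in $PN$) to land the conjugate back inside $P$, hence inside $W_G(P)$, modulo $N$. Nothing further is needed.
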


\begin{proof}
(a) This obviously follows from the definition, as the generating set for $W_{G_1}(P_1)$ is contained in that for $W_G(P)$.

(b) Let the bar denote images in $\bar G=G/N$. The inclusion $W_{\bar G}(\bar P)\geq \overline{W_G(P)}$ is obvious. An arbitrary generating element of $W_{\bar G}(\bar P) $ has the form $w(\bar a_1,\dots )^{\bar g}\in \bar P$ for $\bar a_i\in \bar P$ and $\bar g\in \bar G$. Taking arbitrary pre-images $a_i\in P$ and $g\in G$, we obtain a pre-image $w(a_1,\dots )^{ g}\in PN$. Since $P$ is a $p$-Sylow subgroup of $PN$, there is $h\in N$ such that $x=w(a_1,\dots )^{ gh}\in P$, which means that $x\in W_G(P)$ and obviously $\bar x=w(\bar a_1,\dots )^{\bar g}$. Hence, $W_{\bar G}(\bar P)\leq \overline{W_G(P)}$.
\end{proof}

\begin{lemma}[{\cite[Lemma 2.3]{mz}}]\label{lbw}
Let $G$ be a group, and $w$ a multilinear commutator word.
Let $N$ be a normal subgroup of $G$, and let $M$ be the subgroup generated by the values of $w$ that belong to~$N$. Then $[N,w(G)]\leq M$.
\end{lemma}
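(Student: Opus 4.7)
The plan is to induct on the weight $n$ of $w$. A preliminary observation is that $M$ is normal in $G$: since $G_w$ is a normal subset of $G$ and $N$ is normal in $G$, the intersection $G_w\cap N$ is $G$-invariant, and hence the subgroup $M$ it generates is normal in $G$. This normality is what will let commutator identities propagate from single generators to arbitrary products.

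The base case $n=1$ is $w=x_1$, for which $w(G)=G$, $M=N$, and $[N,G]\le N=M$. For the inductive step, write $w=[w_1,w_2]$, where $w_1,w_2$ are multilinear commutator words of smaller weight in disjoint sets of variables, and put $H_i=w_i(G)$ and $M_i=\langle G_{w_i}\cap N\rangle$. A standard commutator expansion using the identity $[xy,z]=[x,z]^y[y,z]$ and the fact that conjugates of $w$-values are $w$-values shows that $w(G)=[H_1,H_2]$. By the inductive hypothesis applied to $w_1$ and to $w_2$ with the same normal subgroup $N$, one has $[N,H_i]\le M_i$ for $i=1,2$.

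The key step is the containment $[H_1,M_2]\le M$ (and symmetrically $[H_2,M_1]\le M$). On generators, if $h_1=w_1(g_1,\dots,g_k)$ is a $w_1$-value and $m_2=w_2(g_{k+1},\dots,g_n)$ is a $w_2$-value lying in $N$, then $[h_1,m_2]$ is by definition a value of $w=[w_1,w_2]$ on elements of $G$, and it lies in $N$ by normality of $N$; hence $[h_1,m_2]\in G_w\cap N\subseteq M$. The extension from single generators to all of $[H_1,M_2]$ is routine commutator bookkeeping via $[xy,z]=[x,z]^y[y,z]$ and $[x,yz]=[x,z][x,y]^z$ combined with the normality of $M$ in $G$.

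Finally, I would apply the three subgroups lemma to $H_1$, $H_2$, and $N$: since
\[
[[H_2,N],H_1]\le [M_2,H_1]\le M \qquad\text{and}\qquad [[N,H_1],H_2]\le [M_1,H_2]\le M,
\]
the lemma gives $[[H_1,H_2],N]\le M$, which by $w(G)=[H_1,H_2]$ is exactly $[N,w(G)]\le M$. The main conceptual point is recognizing that the decomposition $w=[w_1,w_2]$ lines up precisely with the hypotheses of the three subgroups lemma; once that alignment is made and the key containment $[H_1,M_2]\le M$ is in place, the lemma closes the argument automatically.
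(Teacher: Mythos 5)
Your proof is correct. It is worth pointing out, however, that the paper does not actually prove this lemma: the statement is quoted from \cite[Lemma 2.3]{mz}, and the only argument the paper supplies is the remark that the general case reduces to the case $M=1$ by passing to $G/M$ --- a reduction that rests on precisely the normality of $M$ that you establish at the outset. Your argument is therefore a self-contained alternative to the citation: an induction on the weight of $w$ in which the decomposition $w=[w_1,w_2]$ is matched against the three subgroups lemma. All the steps check out: $w(G)=[H_1,H_2]$ because $G_{w}$ is a normal subset and the variable sets of $w_1$ and $w_2$ are disjoint; the key containment $[H_1,M_2]\leq M$ (and its mirror image) holds on generators because $[h_1,m_2]$ is itself a $w$-value lying in $N$, and passes to the full subgroups by working modulo the normal subgroup $M$, where elementwise commuting of generating sets forces the generated subgroups to commute; and the two hypotheses $[[H_2,N],H_1]\leq M$ and $[[N,H_1],H_2]\leq M$ feed the inductive bounds $[N,H_i]\leq M_i$ into those containments in exactly the right way. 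The weight-one base case is genuinely needed, since the recursion for multilinear commutators bottoms out at single variables, and you handle it correctly ($M=N$ there). What your route buys is independence from the external reference; what the paper's citation buys is brevity.
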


This lemma was proved in \cite{mz} for the case $M=1$, to which the proof immediately reduces by considering $G/M$.

Recall that $\delta_1=[x_1,x_2]$ and by induction
$$\delta_{k+1}=
[\delta_k(x_1,\dots, x_{2^k}),\, \delta_k(x_{2^k+1},\dots, x_{2^{k+1}})]
$$
are multilinear commutator words such that the law $\delta_d=1$ defines the variety of soluble groups of derived length~$d$.
 The next lemma is well-known (see for example \cite[Lemma~4.1]{S2}).

\begin{lemma}\label{delta}
Let $G$ be a group and let $w$ be a multilinear commutator word of weight~$n$. Then every $\delta_n$-value is a $w$-value.
\end{lemma}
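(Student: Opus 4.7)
The plan is induction on the weight $n$ of $w$. The base case $n=1$ is trivial: then $w=x_1$ and every element of $G$, a fortiori every $\delta_1$-value, is a $w$-value.

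For $n\geq 2$, I would write $w=[u,v]$ with $u,v$ multilinear commutator words on disjoint sets of variables, of weights $n_1,n_2$ satisfying $n_1+n_2=n$ and $n_1,n_2\geq 1$; in particular $n_1,n_2\leq n-1$. Before invoking the main induction hypothesis I would record the auxiliary observation that for any $0\leq j\leq k$, every $\delta_k$-value is a $\delta_j$-value. A short sub-induction on $k-j$ handles this: the case $j=0$ is trivial since $\delta_0=x_1$, and for $j\geq 1$ a $\delta_k$-value has the form $[A,B]$ with $A,B$ being $\delta_{k-1}$-values, which by the sub-inductive hypothesis are also $\delta_{j-1}$-values, so $[A,B]$ is a $\delta_j$-value.

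Now an arbitrary $\delta_n$-value reads
\[
g=[\delta_{n-1}(a_1,\ldots,a_{2^{n-1}}),\,\delta_{n-1}(a_{2^{n-1}+1},\ldots,a_{2^n})]=[A,B],
\]
where $A,B$ are $\delta_{n-1}$-values. Since $n_1,n_2\leq n-1$, the auxiliary observation gives that $A$ is a $\delta_{n_1}$-value and $B$ is a $\delta_{n_2}$-value. The main induction hypothesis, applied to $u$ (weight $n_1<n$) and to $v$ (weight $n_2<n$), then yields that $A$ is a $u$-value and $B$ is a $v$-value. Therefore $g=[A,B]$ is a value of $w=[u,v]$, closing the induction.

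I do not foresee a real obstacle. The argument is purely combinatorial and rests only on the recursive definition of $\delta_k$ together with the decomposition $w=[u,v]$ available for every multilinear commutator of weight at least $2$. The only mildly non-obvious ingredient is the auxiliary observation, which is precisely what allows the (possibly very unbalanced) split $n=n_1+n_2$ inside $w$ to absorb the perfectly balanced tree structure of $\delta_n$.
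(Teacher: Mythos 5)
Your proof is correct and is essentially the standard argument; the paper itself offers no proof of this lemma, quoting it as well known from \cite[Lemma~4.1]{S2}, where the same scheme is used (decompose $w=[u,v]$ into multilinear commutators on disjoint variables and use that every $\delta_k$-value is a $\delta_j$-value for $j\le k$). The one bookkeeping slip is in your auxiliary observation: it cannot be an induction on $k-j$, since passing from the pair $(k,j)$ to $(k-1,j-1)$ leaves $k-j$ unchanged and your stated base case $j=0$ is not the case $k-j=0$; the argument you describe is a valid induction on $j$ (proving, for each $j$, the statement for all $k\ge j$), and with that relabelling everything goes through.
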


We now embark on proving Theorem~\ref{gera}, beginning with the following key proposition.

\begin{proposition}\label{4}
Let $w$ be a multilinear commutator.
Let $P$ be a Sylow $p$-subgroup of a finite group $G$ and suppose that $G$ contains an element $t$ such that the coset $tW(P)$ is of exponent $p^e\neq1$. Assume further that $G$ has no nontrivial normal $p$-soluble subgroups. Then $G$ possesses two normal subgroups $K_1$ and $K_2$ such that their intersection $K=K_1\cap K_2$ embeds in a direct product of almost simple groups, the image of $tW(P)$ in $G/K_1$ is of exponent dividing $p^{e-1}$, and $W(P)\leq K_2$. In particular, if $e=1$, then $tW(P)\leq K_1$ and therefore also $W(P)\leq K$.
\end{proposition}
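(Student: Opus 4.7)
My starting point is the generalized Fitting subgroup of $G$. Since $G$ has no nontrivial normal $p$-soluble subgroup, we have $F(G)=1$, and so $F^*(G)=E(G)=S_1\times\cdots\times S_m$ is a direct product of non-abelian simple groups (the components). Set $\Omega=\{S_1,\ldots,S_m\}$ and let $K_0=\bigcap_i N_G(S_i)$ be the kernel of the conjugation action of $G$ on $\Omega$; by Lemma~\ref{l-alm-sim}, $K_0$ embeds in a direct product of almost simple groups. This $K_0$ will be the source of the required embedding for the intersection $K=K_1\cap K_2$.

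The natural candidates for $K_1$ and $K_2$ are the two normal closures
$K_1=\langle (tw)^{p^{e-1}} : w\in W(P)\rangle^G$ (the normal closure in $G$ of the set of $p^{e-1}$-th powers of elements of the coset $tW(P)$) and $K_2=W(P)^G$ (the normal closure of $W(P)$ in $G$). These are clearly normal in $G$; by construction every $p^{e-1}$-th power $(tw)^{p^{e-1}}$ dies in $G/K_1$, so the image of $tW(P)$ in $G/K_1$ has exponent dividing $p^{e-1}$, and $W(P)\le K_2$ is immediate. In the special case $e=1$ the generators of $K_1$ are just the elements of $tW(P)$, giving $tW(P)\subseteq K_1$ and hence $W(P)\le K_1\cap K_2=K$, as claimed in the ``in particular'' part.

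The main obstacle is verifying that $K=K_1\cap K_2$ embeds in a direct product of almost simple groups. To handle it, I would pass to $\tilde G=G/K_0$, which embeds in $\operatorname{Sym}(\Omega)$, and study the images $\tilde t$, $\tilde P$, $\tilde W$, $\tilde K_1$, $\tilde K_2$; note by Lemma~\ref{l-w}(b) that $\tilde W=W_{\tilde G}(\tilde P)$ and the coset $\tilde t\tilde W$ inherits exponent dividing $p^e$. The key structural input is the cycle structure of $p$-elements in $\operatorname{Sym}(\Omega)$: a permutation of order $p^e$ has $p^{e-1}$-th power equal to a disjoint product of $p$-cycles supported only on the length-$p^e$ orbits, a very ``small-support'' element compared with generic elements of $\tilde W^{\tilde G}$. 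The hope is that this disparity forces $\tilde K_1\cap\tilde K_2=1$ in $\tilde G$, so that $K_1\cap K_2\le K_0$ and the desired embedding follows at once from Lemma~\ref{l-alm-sim}. Where the naive trivial-intersection argument fails---as it can when $\tilde W$ is large in $\tilde G$---a more refined construction would be needed, for instance by iteratively peeling off levels of the wreath-product structure of a Sylow $p$-subgroup of $\operatorname{Sym}(\Omega)$, or by enlarging $K_1$ and shrinking $K_2$ (or vice versa) so that their images in $\tilde G$ can be separated using the faithful permutation action.
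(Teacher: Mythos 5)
There is a genuine gap, and you have in effect identified it yourself: with your choice of $K_1=\langle (tw)^{p^{e-1}}:w\in W(P)\rangle^G$ and $K_2=W(P)^G$, the statements about exponents and about $W(P)\leq K_2$ become trivial, but the entire difficulty is displaced onto the claim that $K_1\cap K_2$ embeds in a direct product of almost simple groups, and for these subgroups that claim is neither proved nor plausibly true in general. There is no reason why $K_1\cap K_2\leq K_0$: both $W(P)^G$ and the normal closure of the $p^{e-1}$-th powers can have large image in $G/K_0$, and your fallback (``enlarging $K_1$ and shrinking $K_2$'') is exactly the unsolved part of the problem, not a proof strategy.

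The paper's proof is organized in the opposite way. For each minimal normal subgroup $V=S_1\times\cdots\times S_r$ one takes $K_V$ to be the kernel of the permutation action of $G$ on $\{S_1,\dots,S_r\}$, and proves the dichotomy: either $W(P)\leq K_V$, or the image of $tW(P)$ in $G/K_V$ has exponent dividing $p^{e-1}$. Then $K_1$ is the intersection of the $K_V$ of the second type and $K_2$ the intersection of those of the first type, so $K_1\cap K_2=\bigcap_V K_V$ is automatically the kernel of the action on all components and embeds in a direct product of almost simple groups by Lemma~\ref{l-alm-sim}. The real content is the dichotomy, and this is the idea missing from your proposal: if $W(P)\not\leq K_V$, then Lemma~\ref{lbw} shows that each $P_j=P\cap S_j$ is generated by the $S_j$-projections of $w$-values lying in $W(P)\cap V$; Burnside's normal $p$-complement theorem then yields $x\in W(P)\cap V$ and $h\in N_{S_1}(P_1)$ with $x^h\neq x$; and comparing the $S_1$-projections of $(ux)^{p^e}$ and $(ux^h)^{p^e}$ for a hypothetical $u\in tW(P)$ with a length-$p^e$ orbit forces $x^h=x$, a contradiction. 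Your cycle-structure heuristic in $\operatorname{Sym}(\Omega)$ gestures at the orbit computation but is applied to the wrong objects and omits the essential input from Burnside's theorem; without the dichotomy the construction of $K_1$ and $K_2$ does not go through.
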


\begin{proof} Let $V$ be a minimal normal subgroup of~$G$. Clearly,
$V= S_1\times S_2\times\cdots\times S_r $, where $ S_1,S_2,\dots,S_r$
are isomorphic simple groups of order divisible by~$p$. Then $P_i=P\cap S_i$ is a Sylow $p$-subgroup of $S_i$ for $i=1,\dots,r$. Acting on $V$ by conjugation the group $G$ permutes the simple factors, so we obtain a representation of $G$ by permutations of the set $\left\{S_1,S_2,\dots,S_r\right\}$. Let $K_V$ be the kernel of this
representation.

Suppose that $W(P)\not\leq K_V$; we claim that then the image of $tW(P)$ in $G/K_V$ is of exponent dividing $p^{e-1}$. If this is false, then there exists $u\in tW(P)$ which has an orbit of length $p^e$ in the set $\left\{S_1,S_2,\dots,S_r\right\}$. Without loss of generality we assume that this orbit is $\left\{S_1,S_2,\dots,S_{p^e}\right\}$.

Since $W(P)\not\leq K_V$, there is a conjugate $g$ of an element of $P_w$
such that $g\not\in K_V$. Since $K_V$ is a normal subgroup, we can assume that in fact $g\in P_w$ and $g\not\in K_V$. Choose $j$ such that $S_j\not=S_j^g$ and let $b\in P_j$. By Lemma~\ref{lbw} the commutator $[b,g]$ is a product of elements of $P_w\cap V$. Since $[b,g]=b^{-1}b^g$, where $b\in S_j$ and $b^g\in S_j^g\ne S_j$, we deduce that $b$ can be written as a product of the $S_j$-projections of elements of $P_w\cap V$. Since $b$ was an arbitrary element of $P_j$, we conclude that $P_j$ is generated by the $S_j$-projection of $P_w\cap V$. Since $G$ permutes $S_1,S_2,\dots,S_r$ transitively, there is an element $q\in G$ such that $P_1=P_j^q$, so that $P_1$ is generated by the $S_1$-projection of $P_w^q\cap V$. Furthermore, for each $k=2,\dots, r$, using if necessary conjugations by $l_k\in S_k$ and replacing $P_w^q$ by $P_w^{ql_k}$ we can make sure that the $S_k$-projection of $P_w^{ql_k}$ is contained in $P_k$ (keeping the $S_1$-projection the same). Thus, after changing notation, we obtain that $P_1$ is generated by the $S_1$-projection of $P_w^q\cap V$ for some $q\in G$ such that $P_w^q\cap V\subseteq P$. The latter also means that $P_w^q\cap V\subseteq W(P)$.

Set $N_1=N_{S_1}(P_1)$. The well-known Burnside theorem on normal $p$-com\-ple\-ments \cite[Theorem 7.4.3]{go} shows that $P_1$ is not contained in the center $Z(N_1)$. Since $P_1$ is generated by the $S_1$-projection of $P_w^q\cap V$, we can choose $x\in P_w^q\cap V\subseteq W(P)$ and an element $h\in N_1$ such that $x^h\neq x$.

Now write $x=x_1\cdots x_r$, where $x_i\in P_i$. Of course, $x^h=x_1^hx_2\cdots x_r$. Since $ux\in tW(P)$, we have $(ux)^{p^e}=1$, in particular, the $S_1$-projection of $(ux)^{p^e}$ is trivial. A~direct calculation shows that the $S_1$-projection of $(ux)^{p^e}$ equals the product
$$
x_1x_2^{u^{p^e-1}}\cdots x_{p^e-1}^{u^2}x_{p^e}^u.
$$
Therefore $x_1x_2^{u^{p^e-1}}\cdots x_{p^e-1}^{u^2}x_{p^e}^u=1$.
Since the element $x^h$ belongs to $P_w^{qh}\cap V$ and to $P$, we have $x^h\in W(P)$. Therefore we can apply the same argument to the element $x^h$ in place of $x$, by which also $x_1^hx_2^{u^{p^e-1}}\cdots x_{p^e-1}^{u^2}x_{p^e}^u=1$. Hence we deduce that $x_1^h=x_1$. This is a contradiction with the choice of $h$ such that $x^h\neq x$.

Thus, indeed, the image of $tW(P)$ in $G/K_V$ is of exponent dividing $p^{e-1}$ whenever $W(P)\not\leq K_V$. It remains to set $K_1=\bigcap K_V$, where the intersection is taken over all minimal normal subgroups $V$ of $G$ such that $W(P)\not\leq K_V$, and $K_2=\bigcap K_U$, where $U$ runs over all minimal normal subgroups $U$ of $G$ such that $W(P)\leq K_U$. Then $K=K_1\cap K_2$ embeds in a direct product of almost simple groups by Lemma~\ref{l-alm-sim}.
\end{proof}

\begin{proof}[Proof of Theorem~\ref{gera}] Recall that $w$ is a multilinear commutator of weight $n$, $G$ is a finite group, $H$ a normal subgroup of $G$, $P$ a Sylow $p$-subgroup of $H$, and for some $t\in G$ the coset $tW_G(P)$ is of exponent dividing $p^a$. We need to show that $H$ has non-$p$-soluble length bounded in terms of $a$ and~$n$. Clearly, we can assume without loss of generality that $G=H\langle t\rangle$, and then our task is equivalent to bounding the non-$p$-soluble length of~$G$.

By Lemma~\ref{delta} every $\delta_n$-value is a $w$-value, that is, $P_w\supseteq P_{\delta _n}$. Since $P$ is a normal subgroup with cyclic quotient in a Sylow $p$-subgroup $\hat P$ of $G=H\langle t\rangle$, straightforward induction on $k$ also shows that $P_{\delta _{k}}\supseteq \hat P _{\delta _{k+1}}$ for all~$k$. Let temporarily $\hat w=\delta _{n+1}$. Then for the corresponding subgroup we have
\begin{align*}\hat W_G(\hat P)
&=\langle \hat P_{\delta _{n+1}}^{\;G}\cap \hat P\rangle
=\langle \hat P_{\delta _{n+1}}^{\;G}\cap \hat P\cap H\rangle\\
&= \langle \hat P_{\delta _{n+1}}^{\;G}\cap P\rangle
\leq \langle P_{\delta _{n}}^{\;G}\cap P\rangle
\leq \langle P_{w}^{\;G}\cap P\rangle=W_G(P),
\end{align*}
and therefore also $(t\hat W_G(\hat P))^{p^a}=1$. Hence we can change notation and assume that $w=\delta _{n+1}$ and $P$ is a Sylow $p$-subgroup of~$G$.
Using also Lemma~\ref{l-w}, we can assume that $G$ has no nontrivial normal $p$-soluble subgroups. We now find ourselves under the hypotheses of Proposition~\ref{4}. By this proposition we obtain normal subgroups $K_1$ and $K_2$ such that the image of $tW(P)$ in $G/K_1$ has exponent dividing $p^{a-1}$ and $W(P)\leq K_2$.

In the group $G/K_2$, a Sylow $p$-subgroup is soluble of derived length $n+1$, and therefore the non-$p$-soluble length of $G/K_2$ is bounded in terms of $n$ by Lemma~\ref{t-exp-sol}.
By Lemma~\ref{l-w} the group $G/K_1$ satisfies the hypothesis of the theorem with a smaller value of exponent. By induction on $a$ we obtain that the non-$p$-soluble length of $G/K_1$ is bounded in terms of $n$ and~$a$. Note that the basis of this induction is the case $a=0$, when $tW(P)\leq K_1$ whence $W(P)\leq K_1$, so that a Sylow $p$-subgroup is soluble of derived length $n+1$, which is covered by Lemma~\ref{t-exp-sol} as above. Since $K_1\cap K_2$ embeds in a direct product of almost simple groups and therefore has non-$p$-soluble length at most~1, the result follows.
\end{proof}

\section{Bounding $p$-length of finite $p$-soluble groups}
\label{s-pl}

In this section we prove Theorem~\ref{solu}.
Recall that a finite group is \emph{$p$-soluble} if it has a normal series each factor of which is either a $p$-group, or a $p'$-group; the minimal number of $p$-factors in such a series is called the \emph{$p$-length} of the group.
We use results on the connection between the $p$-length of a finite $p$-soluble group and its derived length and exponent
first obtained in the seminal Hall--Higman paper \cite{ha-hi} for odd primes and later extended to $p=2$ by others \cite{hoa,ber-gro,bry81}. Moreover, we also use the results in representation theory from these papers, which we state here for convenience.

\begin{theorem}[{\cite[Theorem 2.1.1]{ha-hi}}] \label{t211}
Let $H$ be a $p$-soluble linear group over a field of characteristic
$p$, with no normal $p$-subgroup greater than 1. If $a$ is an element of
order $p^m$ in $H$, then the minimal polynomial of $a$ is $(X-1)^r = 0$, where $r = p^m$,
unless there is an integer $m_0$, not greater than $m$, such that $p^{m_0}-1$ is a power
of a prime $q$, in which case, if $m_0$ is the least such integer, $p^{m-m_0}(p^{m_0}-1)\leq r\leq p^m$.
\end{theorem}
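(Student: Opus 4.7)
The plan is to follow the classical Hall--Higman argument, which reduces the problem to a careful representation-theoretic analysis of extraspecial $q$-groups. First I would make the standard reductions: passing to a composition factor of the underlying module $V$ does not change the minimal polynomial of $a$, so we may assume $V$ is irreducible for $H$, and then we may assume $H$ acts faithfully. Since $H$ is $p$-soluble with $O_p(H)=1$, Fitting's lemma gives $C_H(O_{p'}(H))\leq O_{p'}(H)$, so the structure and action of $Q=O_{p'}(H)$ control everything.

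Next, I would apply Clifford theory to the action of $Q$ on $V$. The isotypic components of $V|_Q$ are permuted by $H$, and by passing to a suitable subgroup and module one may reduce to the case when $V|_Q$ is homogeneous and, in fact, when $Q$ is a $q$-group for a single prime $q$. A further reduction, using the Frattini factor $Q/\Phi(Q)$ and replacing $Q$ by an appropriate quotient, brings the situation to the case where $Q$ is an extraspecial $q$-group: $Z(Q)$ acts by a faithful linear character on $V$, $V$ has dimension $q^n$ where $|Q/Z(Q)|=q^{2n}$, and $Q/Z(Q)$ is a symplectic $\mathbb{F}_q$-space on which the cyclic group $\langle a\rangle$ acts by symplectic transformations.

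The core computation is to express the minimal polynomial of $a$ on $V$ in terms of the eigenvalues of $a$ on $Q/Z(Q)$. Writing out the action of $a$ on the Schr\"odinger model of the unique faithful irreducible representation of the extraspecial group, one sees that the minimal polynomial has degree exactly $p^m$ unless some eigenvalue of $a$ on $Q/Z(Q)$, viewed in an extension $\mathbb{F}_{q^s}$, has multiplicative order that forces a collision; this collision occurs precisely when $p^{m_0}\equiv 1\pmod{q^s}$ for some $m_0\leq m$, which in the extremal case gives the arithmetic condition $p^{m_0}-1=q^s$. A direct count of how many collisions are possible then yields the lower bound $r\geq p^{m-m_0}(p^{m_0}-1)=p^m-p^{m-m_0}$.

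The main obstacle is the final step: controlling the minimal polynomial inside a faithful representation of an extraspecial group and squeezing out the precise arithmetic relation between $p^m$, $q$, and $m_0$. This is delicate because it requires ruling out collapses in the minimal polynomial coming from the symplectic structure on $Q/Z(Q)$, and historically the case $p=2$ (and certain configurations involving the prime $q=2$ with extraspecial $2$-groups of type $D_8$ or $Q_8$) required substantial extra work beyond the original Hall--Higman argument, handled later by Hoare, Berger--Gross, and Bryukhanova, whose refinements I would invoke to close the small-prime cases.
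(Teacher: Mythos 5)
This statement is quoted in the paper as a known result (Theorem~2.1.1 of Hall and Higman \cite{ha-hi}) and is not proved there, so there is no in-paper argument to compare against; your outline is the classical Hall--Higman strategy, and the reductions in your first two paragraphs (pass to an irreducible faithful constituent, use $C_H(O_{p'}(H))\leq O_{p'}(H)$, Clifford theory, reduce to an extraspecial $q$-group normalized by $\langle a\rangle$) are the standard correct ones --- though even there the opening claim that ``passing to a composition factor does not change the minimal polynomial'' is false as stated, and the actual reduction needs $O_p(H)=1$ to guarantee that $a$ still has order $p^m$ on some irreducible constituent. The genuine gap is exactly where you flag the ``main obstacle'': the extraspecial computation is asserted rather than carried out, and the mechanism you describe is not the one that works. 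The degree of the minimal polynomial is not read off from ``eigenvalue collisions'' of $a$ on the symplectic space $Q/Z(Q)$; in Hall--Higman it comes from a character (trace) computation on the faithful irreducible $Q$-module of dimension $q^s$, where one bounds $|\chi(a^i)|$ by $q^{\dim C_{Q/Z(Q)}(a^i)/2}$ and averages against $p^m$-th roots of unity to show that \emph{every} such root of unity occurs as an eigenvalue of $a$ --- forcing $r=p^m$ --- unless a suitable power of $a$ acts fixed-point-freely on $Q/Z(Q)$; only then does the counting pin down the exact equality $p^{m_0}-1=q^{s}$ (not merely a congruence $p^{m_0}\equiv 1\pmod{q^{s}}$) and yield the lower bound $p^{m-m_0}(p^{m_0}-1)\leq r$ in the exceptional case. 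That arithmetic is the entire content of the theorem and is missing from the proposal.

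A second, smaller but real error: you propose to ``close the small-prime cases'' by invoking Hoare, Berger--Gross and Bryukhanova. Theorem~2.1.1 is proved for \emph{all} primes, including $p=2$, in the original Hall--Higman paper; what those later authors supply is not a completion of Theorem~\ref{t211} but sharper information needed to deduce bounds on the $2$-length of $2$-soluble groups, for which Theorem~\ref{t211} alone (giving only $r\geq 3\cdot 2^{m-2}$ when $p=2$) is too weak. Indeed the present paper quotes Hoare's refinement separately as Theorem~\ref{t-ho} precisely for that purpose. So that citation does not repair the gap described above.
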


As stated in the Corollary to this theorem in \cite{ha-hi}, if $p$ is neither $2$ nor a Fermat prime, then $r=p^m$; if $p$ is odd, then $r\geq (p-1)p^{m-1}$ always; and if $p=2$, then $r\geq 3\cdot 2^{m-2}$ always.

An element $g$ of order $p^m$ for which the degree of the minimal polynomial is strictly less than $p^m$ is called \emph{exceptional}. Clearly, then $p$ must be either a Fermat prime or~$2$.

\begin{theorem}[{\cite[Theorem 2.1.2]{ha-hi}}] \label{t212}
Under the hypotheses of Theorem~\ref{t211}, if
$a$ and $b$ are exceptional elements of the same Sylow $p$-subgroup of $H$ of orders $p^m$ and $p^n$, respectively, then $[a^{p^{m-1}},b^{p^{n-1}}]=1$.
\end{theorem}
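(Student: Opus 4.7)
The strategy is to reduce to a minimal faithful configuration in which the structure of the acting group is tightly controlled, and then to exploit the very restrictive shape of an exceptional element. Since $H$ is $p$-soluble with $O_p(H)=1$, the Fitting subgroup $F(H)=O_{p'}(H)=Q$ is self-centralizing, so $H/C_H(Q)$ embeds in $\mathrm{Aut}(Q)$. Passing to the algebraic closure of the base field and decomposing the representation $V$ by Clifford's theorem, I would restrict attention to an irreducible constituent on which the subgroup $\langle a,b\rangle$ (or a conjugate of it lying in a common Sylow $p$-subgroup) still acts faithfully, and then pass to a primitive summand. Standard reduction theory in this setting forces $Q$ to be of symplectic type, so modulo a cyclic center $Q/Z(Q)$ is an extraspecial $q$-group with $q\ne p$, and the action of $\langle a,b\rangle$ on $Q/Z(Q)$ is symplectic.

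The numerical input of Theorem~\ref{t211} comes in next. The assumption that $a$ of order $p^m$ has minimal polynomial of degree $r<p^m$ forces the existence of $m_0\le m$ with $p^{m_0}-1=q^{e}$ a prime power, and hence either $p=2$ or $p$ is a Fermat prime. A sharper analysis of the Hall--Higman bound $r\geq p^{m-m_0}(p^{m_0}-1)$ shows that the only way to achieve $r<p^m$ is for $a^{p^{m-1}}$ to act on $Q/Z(Q)$ as an element whose fixed subspace has codimension exactly~$1$, i.e.\ as a (generalized) symplectic transvection, and likewise for $b^{p^{n-1}}$. The same reduction shows that if either $a^{p^{m-1}}$ or $b^{p^{n-1}}$ acts trivially on $Q/Z(Q)$, then it is already trivial on $V$ by the faithful-primitive reduction, and the commutator vanishes trivially.

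The final step is to verify that two such transvections which come from elements of a common Sylow $p$-subgroup $P$ of $H$ must commute. The image of $P$ in $\mathrm{Sp}(Q/Z(Q))$ is a $p$-subgroup; by Sylow's theorem inside the symplectic group, it is contained in a Sylow $p$-subgroup whose long-root (i.e.\ transvection) elements form an abelian subgroup supported on a fixed maximal totally isotropic subspace. Thus $a^{p^{m-1}}$ and $b^{p^{n-1}}$ share the same fixed hyperplane (and the same center line, by duality), so they commute in $\mathrm{Sp}(Q/Z(Q))$. Lifting this back to $H$ is immediate: in the reduced primitive configuration, $C_H(Q/Z(Q))$ acts on $V$ by scalars and hence centralizes any $p$-element, so $[a^{p^{m-1}},b^{p^{n-1}}]$ is already trivial on $V$, and therefore trivial in $H$.

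The main obstacle is the second paragraph: isolating, from the Hall--Higman inequality, the precise statement that an exceptional element at its top power acts as a transvection on the extraspecial quotient, and doing this uniformly in the two awkward cases $p=2$ and $p$ a Fermat prime. Once this structural description is in hand, the commutation in the third paragraph is a routine fact about Sylow subgroups of symplectic groups, and the initial reductions to a faithful primitive module follow the Hall--Higman playbook with no surprises.
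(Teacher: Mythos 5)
First, a point of order: the paper does not prove this statement at all --- it is quoted verbatim, with attribution, as Theorem~2.1.2 of Hall and Higman \cite{ha-hi} and is used as a black box. So your proposal can only be measured against the classical Hall--Higman argument, not against anything in this paper.

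Your opening reductions (faithful primitive module, $Q$ of symplectic type, symplectic action on $Q/Z(Q)$) follow the standard playbook, but the central structural claim of your second paragraph is false, and the third paragraph collapses with it. You assert that exceptionality forces $a^{p^{m-1}}$ to act on $Q/Z(Q)$ as a symplectic transvection, i.e.\ with fixed subspace of codimension~$1$. This is impossible: $a^{p^{m-1}}$ has order exactly $p$, while $Q/Z(Q)$ is a vector space over $\mathbb{F}_q$ with $q\ne p$, so $a^{p^{m-1}}$ acts as a \emph{semisimple} transformation, whereas transvections are unipotent of order $q$. Moreover, no nontrivial semisimple element of a symplectic group fixes a hyperplane pointwise: the complementary invariant line either pairs nontrivially with the fixed hyperplane, forcing the remaining eigenvalue to be $1$, or is perpendicular to everything, contradicting nondegeneracy of the form. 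The Hall--Higman analysis in fact gives essentially the opposite picture --- the top power of an exceptional element acts on $Q/Z(Q)$ with few fixed points, not with a fixed hyperplane. Consequently your commutation step (two transvections in a common Sylow subgroup lie in an abelian long-root configuration of $\mathrm{Sp}(Q/Z(Q))$) cannot even be set up: the elements in question are $p$-elements, not $q$-elements, and lie in no unipotent root subgroup of $\mathrm{Sp}_{2e}(q)$. A further independent gap is the assumption that $a$ and $b$ can be reduced simultaneously to a single primitive constituent with one common group $Q$; the Clifford-theoretic reduction is carried out for one element at a time, and making the two exceptional configurations interact is precisely the nontrivial content of Hall--Higman's Theorem~2.1.2. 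As it stands, the proposal does not constitute a proof.
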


For $p=2$ the original paper \cite{ha-hi} did not give any bounds for the $2$-length of $2$-soluble groups with Sylow $2$-subgroup of exponent $2^e$. First the bound $3e-1$ was obtained by Hoare \cite{hoa}, which was later improved to $2e-1$ by Gross \cite{gro}, and finally to the best-possible $e$ by Bryukhanova \cite{bry79}. However, it is the representation theory technique of Hoare \cite{hoa} that we managed to use in the case of $p=2$ in the proof of Theorem~\ref{solu}.

\begin{theorem}[{\cite[Theorem 2]{hoa}}] \label{t-ho}
Under the hypotheses of Theorem~\ref{t211} for $p=2$, if $a$ and $b$ are elements of the same Sylow $2$-subgroup of $H$ of orders $2^m$ and $2^n$, respectively, such that $[a^{2^{m-1}},b^{2^{n-1}}]\ne 1$, then either
$(a-1)^{2^{m}-1}\ne 0$ or $(b-1)^{2^{n}-1}(a-1)^{2^{m-1}}\ne 0$.
\end{theorem}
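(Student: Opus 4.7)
The plan is to argue by contrapositive on a faithful $H$-module $V$: assume both $(a-1)^{2^m-1}=0$ and $(b-1)^{2^n-1}(a-1)^{2^{m-1}}=0$ as operators on $V$, and deduce $[a^{2^{m-1}},b^{2^{n-1}}]=1$. The key algebraic reduction uses the freshman's dream in characteristic~$2$: for any $x$ of $2$-power order, $(x-1)^{2^k}=x^{2^k}-1$. Setting $A=a^{2^{m-1}}$, $B=b^{2^{n-1}}$ and $u=A-1=(a-1)^{2^{m-1}}$, $v=B-1=(b-1)^{2^{n-1}}$, one has $u^2=v^2=0$, and the involutions $A,B$ commute on $V$ iff $uv=vu$ there. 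Factoring $(b-1)^{2^n-1}=(b-1)^{2^{n-1}-1}\cdot v$, the second hypothesis takes the clean form $(b-1)^{2^{n-1}-1}\,vu=0$ on $V$.

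I would begin by reducing to the case where $V$ is faithful and irreducible for $H$, so $O_2(H)=1$. The first hypothesis says that $a$ is \emph{exceptional} in the sense of Theorem~\ref{t211}: its minimal polynomial has degree $r<2^m$. For $p=2$ the Hall--Higman structure of an exceptional element involves the Mersenne prime $q=3$, so there is an extraspecial $3$-subgroup $Q\trianglelefteq O_{2'}(H)$ on which $a$ acts non-trivially, and $V$ splits as a $Q$-module into isotypic components that are cyclically permuted by $a$ up to scalars. The image $uV$ is then a concrete, $Q$-stable subspace determined by the fixed and moved components of $A$ under this permutation.

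Using this normal form I would exploit $(b-1)^{2^{n-1}-1}\,vu=0$ to argue that $vu=0$. The strategy is to show that $(b-1)^{2^{n-1}-1}$ is injective on the subspace $vu(V)\subseteq uV$: otherwise, $b$ would itself be exceptional on some $\langle a,b\rangle$-sub-quotient of $uV$, and Theorem~\ref{t212} applied to the pair $(a,b)$ would directly give $[A,B]=1$. Granting $vu=0$, a symmetric argument, obtained by conjugating the Hall--Higman configuration $Q$ by a suitable element of $N_H(Q)$ that swaps the roles of the two involutions, yields $uv=0$ as well. Thus $uv=vu=0$, i.e.\ $AB=BA$ on $V$, and by faithfulness $[A,B]=1$ in $H$.

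The main obstacle is the asymmetry of the second hypothesis. Theorem~\ref{t212} kicks in cleanly when both $a$ and $b$ are exceptional, but here one is given only exceptionality of $a$ together with the one-sided vanishing $(b-1)^{2^n-1}(a-1)^{2^{m-1}}=0$, not the full $(b-1)^{2^n-1}=0$. Bridging this gap forces the argument down into the $Q$-isotypic decomposition of $V$ and into the interleaving of image and kernel filtrations of the involutions $A,B$ in characteristic~$2$, where Maschke's theorem fails and the nilpotent Jordan structure of $(a-1)$ and $(b-1)$ must be tracked side by side. This module-theoretic bookkeeping, rather than a single invocation of Hall--Higman, is the core of Hoare's $p=2$ analysis.
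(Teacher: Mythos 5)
First, a point of reference: the paper does not prove this statement at all --- it is imported verbatim from Hoare \cite{hoa} (his Theorem~2) and used as a black box in the proof of Proposition~\ref{p-k}. So your attempt has to stand on its own, and as it stands it does not. Your opening moves are correct and are indeed the right normalization: passing to the contrapositive, the identities $(x-1)^{2^k}=x^{2^k}-1$ in characteristic $2$, the observation that $u=(a-1)^{2^{m-1}}=A-1$ and $v=(b-1)^{2^{n-1}}=B-1$ satisfy $u^2=v^2=0$ and that $[A,B]=1$ is equivalent to $uv=vu$, the factorization $(b-1)^{2^n-1}=(b-1)^{2^{n-1}-1}v$, and the recognition that $(a-1)^{2^m-1}=0$ forces $a$ to be exceptional with $m_0=2$, $q=3$ (for $p=2$ the least $m_0$ with $2^{m_0}-1$ a prime power is always $2$). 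But everything after that is assertion rather than proof, and the two mechanisms you propose for closing the argument both fail.

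Concretely: (i) to get $vu=0$ from $(b-1)^{2^{n-1}-1}vu=0$ you want $(b-1)^{2^{n-1}-1}$ to be injective on $vu(V)$, ``otherwise $b$ would be exceptional on some $\langle a,b\rangle$-subquotient and Theorem~\ref{t212} applies.'' Non-injectivity of $(b-1)^{2^{n-1}-1}$ on a small subspace says nothing about the minimal polynomial of $b$ on $V$, which is what exceptionality means; and Theorem~\ref{t212} cannot be applied to a subquotient of $\langle a,b\rangle$, because its hypotheses (no nontrivial normal $2$-subgroup, $p$-solubility of the acting group, exceptionality of \emph{both} elements as a global condition on the faithful module) are not inherited by such sections. (ii) The ``symmetric argument'' yielding $uv=0$ has nothing to rest on: the hypotheses of the theorem are genuinely asymmetric in $a$ and $b$ --- you are given $(a-1)^{2^m-1}=0$ but only $(b-1)^{2^n-1}u=0$, not $(b-1)^{2^n-1}=0$ --- and there is no reason an element of $N_H(Q)$ should interchange $A$ and $B$. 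Note also that your target $uv=vu=0$ is strictly stronger than the required $uv=vu$ (commuting involutions need not have $(A-1)(B-1)=0$), so even the goal is miscalibrated. Finally, the reduction ``to $V$ faithful and irreducible'' needs care: the image of $H$ on a single composition factor can acquire a nontrivial normal $2$-subgroup even though $O_2(H)=1$, and the orders of $a,b$ can drop. The actual content of Hoare's theorem is precisely the module-theoretic analysis of the $q=3$ exceptional configuration that you defer to in your last paragraph; without it, there is no proof here.
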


We say that a section $L/M$ of a group $G$ \emph{is contained in a section} $N/S$ if $N\geq L\geq M\geq S$, and that $L/M$ \emph{is contained in a normal subgroup} $N$ if $L\leq N$. We say that a subgroup $F$ of $G$ \emph{covers} a section $L/M$ if $(F\cap L)M=L$. The centralizer of a section $L/M$ in a subgroup $H$ is defined as usual: $C_H(L/M)=\{g\in H\mid [L,g]\leq M\}$.

We now prove a key proposition, from which Theorem~\ref{solu} will follow.

\begin{proposition}\label{p-k} Let $K$ be a normal subgroup of a finite $p$-soluble group~$G$. Suppose that $K$ contains a $p$-subgroup $U$ that covers all chief $p$-factors of $G$ contained in $K$ that are not central in $K$ and that for some $t\in G$ the coset $tU$ has exponent dividing~$p^e$. Then the $p$-length of $K$ is at most $e+1$ if $p$ is an odd prime that is not a Fermat prime, at most $2e+1$ if $p$ is a Fermat prime, and at most $3e+1$ if $p=2$.
\end{proposition}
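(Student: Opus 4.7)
My plan is to induct on $e$, following the classical Hall--Higman strategy for bounding $p$-length by Sylow exponent, adapted to the weaker hypothesis that only the coset $tU$ has bounded exponent. First I would reduce to $O_{p'}(K) = 1$, since the hypothesis descends to $G/O_{p'}(K)$ and the $p$-length of $K$ is preserved. Set $F = O_p(K)$ and $V = F/\Phi(F)$; a standard consequence of $p$-solubility gives $C_K(V) = F$, so $V$ is a faithful $\mathbb{F}_p[K/F]$-module whose $G$-composition factors correspond to the chief $p$-factors of $G$ inside $F$. The covering hypothesis then reads: the image of $U \cap F$ in $V$ covers every $G$-composition factor of $V$ on which $K$ acts non-trivially.

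For the base case $e = 0$ we have $t = 1$ and $U = 1$, so every $G$-composition factor of $V$ is $K$-trivial. In particular every element of $K/F$ has all eigenvalues equal to $1$ on $V$, so $K/F$ consists of unipotent elements, i.e., is a $p$-group. Since $O_p(K/F) = 1$ by definition of $F$, this forces $K = F$ and hence $\ell_p(K) \leq 1$, as required.

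For the inductive step, note that each $tu$ with $u \in U$ is a $p$-element of order dividing $p^e$; in particular $t$ itself is a $p$-element. I would apply Theorem~\ref{t211} to the action of $t$ on each non-central $G$-composition factor $W$ of $V$: the minimal polynomial of $t$ on $W$ has degree at least $(p-1)p^{m-1}$ (odd $p$) or $3 \cdot 2^{m-2}$ ($p = 2$), where $p^m$ is the order of $t$ on $W$, unless $t$ is exceptional on $W$. Theorems~\ref{t212} and~\ref{t-ho} then constrain the joint behaviour of the exceptional elements within $tU$. Combining these constraints with the fact that the image of $U \cap F$ covers the non-central part of $V$, I would construct a $G$-invariant subgroup $N$ with $F \leq N \leq K$, whose $p$-length is at most $1$, $2$, or $3$ in the respective cases, and such that the image of $tU$ in $G/N$ has exponent dividing $p^{e-1}$. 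Applying the induction hypothesis to $(G/N, K/N, UN/N, tN)$ with parameter $e-1$ bounds $\ell_p(K/N)$ by $e$, $2e - 1$, or $3e - 2$, and adding $\ell_p(N)$ gives the stated bounds $e+1$, $2e+1$, or $3e+1$.

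The main obstacle is the explicit construction of the subgroup $N$ in the inductive step and the verification that the image of $tU$ in $G/N$ has exponent dividing $p^{e-1}$: one must find a characteristic section of $K$ large enough to absorb the ``top power'' of $t$'s action on $V$, yet small enough in $p$-length to keep the accounting tight. The representation-theoretic input from Theorems~\ref{t212} and~\ref{t-ho} is decisive here, and the relative weakness of Theorem~\ref{t-ho} compared to Theorem~\ref{t212} accounts for the jump from $2e+1$ to $3e+1$ when $p = 2$.
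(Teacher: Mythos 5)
Your overall strategy --- induction on $e$, Hall--Higman minimal-polynomial estimates, and a normal subgroup $N$ of small $p$-length that absorbs the top $p$-th powers of the elements of $tU$ --- is the same as the paper's, but the decisive step is missing. You invoke Theorem~\ref{t211} for the action of $t$ on a covered chief factor $V=L/M$ and then say that Theorems~\ref{t212} and~\ref{t-ho} ``constrain the exceptional elements''; but nothing you have established forces any element of $tU$ to be exceptional on $V$. An element $h\in tU$ inducing an automorphism of order $p^e$ could a priori have minimal polynomial $(X-1)^{p^e}$ on $V$, in which case the exceptional-element theorems say nothing. The place where the covering hypothesis must enter is the following computation: for $h\in tU$ and $u\in U$ one also has $hu\in tU$, so $1=(hu)^{p^e}=h^{p^e}\,u^{h^{p^e-1}}\cdots u^{h}u$, whence $u^{h^{p^e-1}}\cdots u^{h}u=1$; since $(U\cap L)M=L$, this yields the operator identity $(h-1)^{p^e-1}=0$ on $V$ --- degree $p^e-1$, strictly below $p^e$. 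Only after this does Theorem~\ref{t211} force each $h\in tU$ either to act on $V$ with order dividing $p^{e-1}$ or to be exceptional, and only then can Theorems~\ref{t212} and~\ref{t-ho} be applied to pairs $h,\,hu$ of elements of $tU$ (for $p=2$ with two different choices of the parameters, producing a chain $T\geq T_1\geq T_2$ of three correction terms rather than one, which is where the bound $3e+1$ comes from). Variants of the same coset manipulation, and not merely the covering of $F/\Phi(F)$, are needed again at the second and third layers to show that $N$ itself (resp.\ $T$, $T_1$) acts trivially once $N_1\cap K$ (resp.\ $T_2\cap K$, $T_1\cap K$) has been factored out.

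Two further points. First, your requirement $F\leq N\leq K$ is incompatible with your requirement that $tU$ have exponent dividing $p^{e-1}$ modulo $N$: the elements $(tu)^{p^{e-1}}$ need not lie in $K$ at all (for instance when $t\notin K$ and $t$ has order $p^e$ modulo $K$), so no subgroup of $K$ can reduce the exponent of the coset. One must instead take $N$ to be the normal closure in $G$ of $\langle a^{p^{e-1}}\mid a\in tU\rangle$, bound the $p$-length of $N\cap K$ and of $K/(N\cap K)$ separately, and check that the covering hypothesis passes to $G/N$ (it does, because any chief $p$-factor of $G$ lies either inside $N$ or above it). Second, for $p=2$ the argument involves $2^{e-2}$-th powers and therefore requires $e\geq 2$; the case $e=1$ needs a separate (easy) treatment as an additional base of the induction, which your sketch does not provide.
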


In the proof of Proposition~\ref{p-k}, for odd $p$ we use Theorem~\ref{t211} and \ref{t212} of Hall and Higman \cite{ha-hi} as in the proof of Theorem~3* in Wilson's paper \cite{wil83}.
For $p=2$ we apply similar but more complicated arguments based on Theorem~\ref{t-ho} of Hoare~\cite{hoa}.

\begin{proof}
 We use induction on $e$, where $p^e$ is the exponent of the coset $tU$. The basis of induction is the case $e=0$, which corresponds to $U=1=t$.
 Then all chief $p$-factors of $G$ contained in $K$ are central in $K$ and therefore $K$ has a normal $p$-com\-ple\-ment, so that its $p$-length is at most~$1$. We assume $e>0$ in what follows.
\smallskip

 Let $V=L/M$ be a chief $p$-factor of $G$ contained in $K$ and covered by $U$,
and let $C=C_G(L/M)$. Then $G/C$ acts by conjugation on $V$, which can be regarded as a faithful ${\Bbb F} _p(G/C)$-module. Since this is an irreducible module, $G/C$ has no non-trivial normal $p$-subgroups.

 Let $h\in tU$. If $u\in U$, then $hu\in tU$, and so $h^{p^e}=(hu)^{p^e}=1$, which implies
$$
u^{h^{p^e-1}}\cdots u^{h^{2}}u^{h}u =1.
$$
Since $(U\cap L)M=L$,
this means that the linear transformation of $V$ induced by $h$, which we denote by the same letter, satisfies (in characteristic $p$)
\begin{equation}\label{e-lte}
0=h^{p^e-1}+\cdots +h^{2}+h+1 =(h-1)^{p^e-1}.
\end{equation}
Then either $h^{p^{e-1}}$ acts trivially on $V$, or the linear transformation induced by $h$ is exceptional. It makes sense to split the argument into two cases, for $p$ odd, and for $p=2$.

\medskip

 {\it  Case of $p$ odd.} We define \begin{itemize}
\item[$N\,$] to be the normal closure in $G$ of the subgroup $\langle a^{p^{e-1}}\mid a\in tU\rangle$, and
    \item[$N_1$] to be the normal closure in $G$ of the subgroup $\langle [a^{p^{e-1}},b^{p^{e-1}}]\mid a,b\in tU\rangle $.
    \end{itemize}

\begin{lemma}\label{l-5}
{\rm (a)} The subgroup $N_1$ acts trivially on each chief $p$-factor $L/M$ of $G$
contained in $K$ and covered by~$U$.

{\rm (b)} The subgroup $N$ acts trivially on each chief $p$-factor $L/M$ of $G$ contained in $K$ and covered by $U$
such that
$N_1\cap K\leq M$.

{\rm (c)} The $p$-length of $N\cap K$ is at most~$2$.

{\rm (d)} If $p$ is not a Fermat prime, then $N$ acts trivially on each chief $p$-factor $L/M$ of $G$
contained in $K$ and covered by $U$
and the $p$-length of $N\cap K$ is at most~$1$.
\end{lemma}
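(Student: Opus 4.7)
My plan is to fix a chief $p$-factor $V=L/M$ of $G$ contained in $K$ and covered by $U$, set $C=C_G(L/M)$, and analyze the faithful irreducible action of $\bar G=G/C$ on $V$. The $p$-solubility of $G$ forces $O_p(\bar G)=1$, placing us in the setting of Hall and Higman's Theorems~\ref{t211} and~\ref{t212}. Identity~(3.1) shows every $\bar h\in\overline{tU}$ satisfies $(\bar h-1)^{p^e-1}=0$ on $V$, so by Theorem~\ref{t211} either $\bar h^{p^{e-1}}=1$ on $V$ or $\bar h$ is exceptional, with the latter only possible when $p$ is $2$ or a Fermat prime.

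For part~(a), I would take a generator $[a^{p^{e-1}},b^{p^{e-1}}]$ of $N_1$ with $a,b\in tU$; it acts trivially on $V$ unless both $\bar a$ and $\bar b$ are exceptional on $V$. The key observation is that $a^{-1}b\in U$, so $\bar a^{-1}\bar b\in\bar U$; combined with $\bar U$ being a $p$-subgroup, this lets me place $\bar a,\bar b$ into a common Sylow $p$-subgroup of $\bar G$ after a suitable $\bar G$-conjugation (which preserves membership in $N_1$, being a normal closure). Hall--Higman's Theorem~\ref{t212} then yields $[\bar a^{p^{e-1}},\bar b^{p^{e-1}}]=1$ on $V$ for this generator, and the conclusion extends to all conjugate generators, so $N_1$ centralizes $V$.

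For part~(b), a direct commutator computation shows each generator of $N_1$ lies in $K$, so the hypothesis $N_1\cap K\le M$ is equivalent to $N_1\le M$. Hence on $V$ the transformations $\bar a^{p^{e-1}}$ for $a\in tU$ and their $G$-conjugates pairwise commute and have order dividing $p$, generating an elementary abelian $p$-subgroup of $GL(V)$ --- the image of the normal subgroup $\bar N$ of $\bar G$ in the faithful action. Since $O_p(\bar G)=1$, this image must be trivial, so $N$ centralizes $V$. For part~(c), recall that under the hypothesis of Proposition~\ref{p-k} every chief $p$-factor of $G$ in $K$ is either central in $K$ (hence centralized by $N\cap K$) or covered by $U$ (centralized by $N_1$ by~(a), and further by $N$ above $N_1\cap K$ by~(b)); routine analysis of the upper $p$-series of $N\cap K$ then bounds its $p$-length by~$2$. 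Part~(d) is immediate: for odd $p$ that is not a Fermat prime, the corollary to Theorem~\ref{t211} excludes exceptional elements, so $\bar h^{p^{e-1}}=1$ on $V$ for every $h\in tU$, whence $N$ itself centralizes each such $V$ and the bookkeeping of~(c) collapses to $p$-length at most~$1$.

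The chief obstacle is the ``common Sylow'' hypothesis in part~(a): arranging two exceptional elements $\bar a,\bar b$ together with $\bar U$ inside a single Sylow $p$-subgroup of $\bar G$, exploiting $\bar a^{-1}\bar b\in\bar U$ and the normal-closure structure of $N_1$ to effect the required conjugation. Once (a) is secured, the remaining parts follow by formal manipulations with normal closures, abelianness in the faithful $\bar G$-module, and elementary $p$-length arithmetic.
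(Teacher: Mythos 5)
Your part (b) contains the decisive gap, and it is exactly where your route diverges from the paper's. From $N_1\cap K\le M$ (equivalently $N_1\le M$, since indeed $N_1\le K$ as you observe) you conclude that the transformations $\bar a^{p^{e-1}}$, $a\in tU$, \emph{together with all their $G$-conjugates} pairwise commute, so that the image of $N$ in $\bar G=G/C_G(V)$ is an elementary abelian normal $p$-subgroup and hence lies in $O_p(\bar G)=1$. But $N_1$ is the normal closure of the commutators $[a^{p^{e-1}},b^{p^{e-1}}]$, so its triviality on $V$ only yields $[(\bar a^{\bar g})^{p^{e-1}},(\bar b^{\bar g})^{p^{e-1}}]=1$ with the \emph{same} conjugator in both entries; it says nothing about $[(\bar a^{\bar g})^{p^{e-1}},(\bar b^{\bar g'})^{p^{e-1}}]$ for $g\ne g'$. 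The normal closure of a commuting family of elements of order $p$ need not be a $p$-group (the normal closure of a transposition in $S_5$ is all of $S_5$), so the conclusion $\bar N\le O_p(\bar G)$ does not follow. The paper's argument is genuinely quantitative: writing $(hu)^{p^{e-1}}=h^{p^{e-1}}y$ with $y=u^{h^{p^{e-1}-1}}\cdots u^h u\in L$ for $u\in U\cap L$, the hypothesis forces $[y,h^{p^{e-1}}]\in N_1\cap K\le M$, and since $L=(U\cap L)M$ this gives the operator identity $(h-1)^{2p^{e-1}-1}=0$ on $V$ for every $h\in tU$; comparing with the lower bound $(p-1)p^{e-1}$ of Theorem~\ref{t211} for the minimal polynomial of an element of order $p^e$, and noting $(p-1)p^{e-1}>2p^{e-1}-1$ for odd $p$, one concludes that $h^{p^{e-1}}$ acts trivially, hence so does its normal closure $N$ (as $C_G(V)\trianglelefteq G$). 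This degree comparison is the heart of the lemma — it is why Fermat primes give the weaker bound and why $p=2$ needs Hoare's theorem instead — and your proposal omits it entirely.

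In part (a) your proposed resolution of the ``common Sylow'' requirement does not work as described: to remain among the generators of $N_1$ you must conjugate $a$ and $b$ by the \emph{same} element, which does not change whether $\bar a$ and $\bar b$ lie in a common Sylow $p$-subgroup of $\bar G$; and the facts that $\bar a,\bar b$ are $p$-elements with $\bar a^{-1}\bar b\in\bar U$ do not force $\langle\bar a,\bar b\rangle$ to be a $p$-group. (The paper applies Theorem~\ref{t212} directly at this point without further comment, so you are at worst no worse off than the authors, but the mechanism you propose is not a proof.) Parts (c) and (d) follow the paper's bookkeeping and are fine in outline, but (c) as you present it rests on your defective version of (b).
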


\begin{proof}
By Theorem~\ref{t212}, the $p^{e-1}$th powers of exceptional elements of order $p^e$ commute. Therefore by \eqref{e-lte} every commutator $[a^{p^{e-1}},b^{p^{e-1}}]$ for $a,b\in tU $ acts trivially on $V=L/M$, and then so does their normal closure $N_1$. This proves (a).

Now suppose in addition that $N_1\cap K\leq M$. Let $h\in tU$ and $u\in U\cap L$, and let
$$
y=u^{h^{p^{e-1}-1}}\cdots u^{h^{2}}u^{h}u ,
$$
so that $(hu)^{p^{e-1}} = h^{p^{e-1}}y$.
Then $[(hu)^{p^{e-1}},h^{p^{e-1}}]\in N_1$. We also have
$$
[(hu)^{p^{e-1}},h^{p^{e-1}}]=[h^{p^{e-1}}y,h^{p^{e-1}}]=[y,h^{p^{e-1}}]\in K,
$$
since $y\in L\leq K$. As a result, $
[(hu)^{p^{e-1}},h^{p^{e-1}}]=[y,h^{p^{e-1}}]\in N_1\cap K\leq M$, that is,
$$
[u^{h^{p^{e-1}-1}}\cdots u^{h^{2}}u^{h}u,\,h^{p^{e-1}}]\in M.
$$
Since $L=(U\cap L)M$, this implies that the linear transformation of $V$ induced by every element $h\in tU$ satisfies
$$
0=(h^{p^{e-1}-1}+\cdots +h+1)(h^{p^{e-1}}-1) =(h-1)^{2p^{e-1}-1}.
$$
On the other hand, if $h$ induces an automorphism of order $p^e$, then by Theorem~\ref{t211} its minimal polynomial has degree at least $(p-1)p^{e-1}$. Since $(p-1)p^{e-1}>2p^{e-1}-1$ for odd~$p$, it follows that for any $h\in tU$ the elements $h^{p^{e-1}}$ act trivially on~$V$. Then their normal closure $N$ also acts trivially on~$V$, so that (b) holds.

By part (a),
$N_1$ acts trivially on every chief $p$-factor of $G$ contained in $K$ and covered by~$U$. By hypothesis, $K$ acts trivially on all other chief $p$-factors of $G$ contained in~$K$. Therefore all chief $p$-factors of $G$ contained in $N_1\cap K$ are central in $N_1\cap K$, which means that $N_1\cap K$ has a normal $p$-com\-ple\-ment. By part (b),
$N$ acts trivially on every chief $p$-factor of $G$ contained in $(N\cap K)/ (N_1\cap K)$ and covered by~$U$. By hypothesis, $K$ acts trivially on all other chief $p$-factors of $G$ contained in $(N\cap K)/ (N_1\cap K)$. Therefore all chief $p$-factors of $G$ contained in $(N\cap K)/(N_1\cap K)$ are central in $(N\cap K)/(N_1\cap K)$, which means that $(N\cap K)/(N_1\cap K)$ has a normal $p$-com\-ple\-ment. As a result, the $p$-length of $N\cap K\geq N_1\cap K\geq 1$ is at most~2.

If $p$ is an odd prime that is not a Fermat prime, then no exceptional elements appear, so that all elements $a^{p^{e-1}}$ for $a\in tU$ act trivially on~$V$, and hence so does~$N$. Then all chief $p$-factors of $G$ contained in $N\cap K$ are central in $N\cap K$, which implies that $N\cap K$ has a normal $p$-com\-ple\-ment and $p$-length~$1$.
\end{proof}

We now complete the proof of Proposition~\ref{p-k} for odd~$p$. We define $N$ as in Lemma~\ref{l-5} and consider $\bar G=G/N$ denoting by the bar images of elements and subgroups in $\bar G$.
Given a chief $p$-factor $L/M$ of $G$,   either $L\leq N$, so that $L/M$ is  trivial in $\bar G$, or $M\geq N$, in which case $C_{\bar G}(\bar L/\bar M)=\overline{C_G(L/M)}$. Therefore $\bar U$ covers all chief $p$-factors of $G$ contained in $\bar K$ that are not central in $\bar K$. Thus,  the hypotheses
of Proposition~\ref{p-k} hold for the images with $\bar t \bar U$ of exponent dividing $p^{e-1}$ by the construction of~$N$. By induction, the $p$-length of $\bar K\cong K/(K\cap N)$ is at most $2(e-1)+1$ (or at most $e$ if $p$ is not a Fermat prime). By Lemma~\ref{l-5} the $p$-length of $K\cap N$ is at most 2 (respectively, 1), and the result follows.
\medskip

 {\it  Case $p=2$.} Recall that we assume $e\geq 1$. We need one more step in the basis of induction, as some of the arguments below do not work for $e=1$. Thus, suppose that $e=1$. Let $V=L/M$ be any chief 2-factor of $G$ contained in $K$ and covered by~$U$. Every element $\bar x\in V$ is the image of an element $x\in U\cap L$. For any $u\in U$ we have $1=(tu)^2=(tux)^2=(tu)^2x[x,tu]x$, and since $x^2\in M$ this implies $[x,tu]\in M$. Thus, $tU\leq C_G(V)$. Hence the normal closure $B$ of $tU$ in $G$ has the property that all chief 2-factors of $G$ contained in $B\cap K$ are central in $B\cap K$, whence $B\cap K$ has a normal 2-com\-ple\-ment and 2-length 1. All chief 2-factors of $G$ contained in $K/(B\cap K)$ are not covered by $U\leq B\cap K$ and therefore are central in $K$ by hypothesis. Therefore $K/(B\cap K)$ also has a normal 2-com\-ple\-ment and 2-length 1. As a result, the 2-length of $K$ is at most 2, as required.

From now on we assume that $e\geq 2$. We define
\begin{itemize}
 \item[$T\,$] to be the normal closure in $G$ of the subgroup $\langle a^{2^{e-1}}\mid a\in tU\rangle$,
 \item[$T_1$] to be the normal closure in $G$ of the subgroup $\langle [a^{2^{e-1}},\,b^{2^{e-2}}]\mid a,b\in tU \rangle $, and
 \item[$T_2$] to be the normal closure in $G$ of the subgroup $\langle [a^{2^{e-1}},\,b^{2^{e-1}}]\mid a,b\in tU \rangle $.
 \end{itemize}
 Note that $T\geq T_1\geq T_2$.

\begin{lemma}\label{l-52}
{\rm (a)} The subgroup $T_2$ acts trivially on each chief $2$-factor $L/M$ of $G$ contained in $K$ and covered by~$U$.

{\rm (b)} The subgroup $T_1$ acts trivially on each chief $2$-factor $L/M$ of $G$
contained in $K$ and covered by $U$ such that
$T_2\cap K\leq M$.

{\rm (c)} The subgroup $T$ acts trivially on each chief $2$-factor $L/M$ of $G$ contained in $K$ and covered by $U$ such that
$T_1\cap K\leq M$.

{\rm (d)} The $2$-length of $T\cap K$ is at most~$3$.
\end{lemma}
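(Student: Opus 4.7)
The strategy mirrors the proof of Lemma~\ref{l-5}, with Theorem~\ref{t-ho} of Hoare playing the role of Theorem~\ref{t212} of Hall--Higman. The chain $T\geq T_1\geq T_2$ now replaces the two-step chain $N\geq N_1$, reflecting the fact that for $p=2$ a single descent step does not suffice to control the minimal polynomial of an element of order $2^e$ on a chief factor. The plan is to establish (a), (b), (c) in order and then deduce (d) by a chief-factor count.

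For (a), fix a chief $2$-factor $V=L/M$ of $G$ contained in $K$ and covered by $U$, regard $V$ as a faithful module for $H:=G/C_G(V)$, and pick $a,b\in tU$. From \eqref{e-lte} applied to $V$ we know that $(a-1)^{2^e-1}=0$ and $(b-1)^{2^e-1}=0$ on $V$. If the image of $a$ or $b$ in $H$ has order strictly less than $2^e$, then $a^{2^{e-1}}$ or $b^{2^{e-1}}$ already acts trivially on $V$ and the commutator $[a^{2^{e-1}},b^{2^{e-1}}]$ is trivial on $V$. Otherwise both images have order $2^e$, and Theorem~\ref{t-ho} with $m=n=e$ says that $[a^{2^{e-1}},b^{2^{e-1}}]\ne 1$ on $V$ would force either $(a-1)^{2^e-1}\ne 0$ or $(b-1)^{2^e-1}(a-1)^{2^{e-1}}\ne 0$, both of which are excluded by the identities above. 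Hence every generator of $T_2$ acts trivially on $V$, and so does $T_2$ itself.

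For (b) and (c) the plan is to combine Hoare's theorem with the auxiliary computation from Lemma~\ref{l-5}(b). For $h\in tU$ and $u\in U\cap L$ one has $(hu)^{2^{e-1}}=h^{2^{e-1}}y$ with $y=u^{h^{2^{e-1}-1}}\cdots u^{h}u\in L\le K$ and $y\equiv u(h-1)^{2^{e-1}-1}\pmod M$ in additive notation on $V$; an analogous formula holds for $(hu)^{2^{e-2}}$. Taking commutators such as $[(hu)^{2^{e-1}},k^{2^{e-1}}]$ and $[(hu)^{2^{e-1}},k^{2^{e-2}}]$ for appropriate $h,k\in tU$, expanding them via $[xy,z]=[x,z]^y[y,z]$ and reducing modulo $M$ under the hypothesis $T_2\cap K\le M$ in (b) (respectively $T_1\cap K\le M$ in (c)) yields polynomial identities in $\mathbb{F}_2[h]$ acting on $V$. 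These, combined with Theorem~\ref{t-ho} applied to pairs of elements with suitable $V$-orders, rule out both of its alternatives and force the triviality on $V$ of every generator of $T_1$ (respectively of $T$).

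Part (d) is then a chief-factor count as in the proof of Lemma~\ref{l-5}(c): every chief $2$-factor $L/M$ of $G$ contained in $T\cap K$ lies in exactly one of the three slices $T_2\cap K$, $(T_1\cap K)/(T_2\cap K)$, $(T\cap K)/(T_1\cap K)$, and is centralised either by $K$ (if $L/M$ is not covered by $U$, by the standing hypothesis on $U$) or by $T_2$, $T_1$, or $T$ respectively (from (a), (b), (c)). Each slice therefore has a normal $2$-complement and $2$-length at most~$1$, so the $2$-length of $T\cap K$ is at most~$3$. The hardest step is (c): \eqref{e-lte} on its own bounds the minimal polynomial $(X-1)^r$ of any $h\in tU$ on $V$ only by $r\le 2^e-1$, and Theorem~\ref{t211} permits $r$ up to $2^e-1$ for $h$ of $V$-order $2^e$; bringing $r$ down to $2^{e-1}$ requires extracting the full strength of the hypothesis $T_1\cap K\le M$ through a careful application of Theorem~\ref{t-ho}, and this is the most delicate computation of the whole proof.
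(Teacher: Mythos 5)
Your parts (a) and (d) match the paper's proof, and your outline for (b) is in the right spirit, though you leave unspecified the one choice that makes it work: one takes the commutator $[(hux)^{2^{e-1}},h^{2^{e-1}}]$ with $x\in U\cap L$ (the covering element), observes that it lies both in $T_2$ and in $K$, hence in $T_2\cap K\leq M$, cancels $[(hu)^{2^{e-1}},h^{2^{e-1}}]\in M$, and obtains $((hu)-1)^{2^{e-1}-1}(h-1)^{2^{e-1}}=0$ on $V$; Theorem~\ref{t-ho} with $a=h$, $b=hu$, $m=e$, $n=e-1$ then forces $[h^{2^{e-1}},(hu)^{2^{e-2}}]$ to act trivially, which handles all generators of $T_1$.

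The genuine gap is in (c), exactly the step you flag as the hardest. You propose to finish it by ``a careful application of Theorem~\ref{t-ho}'', but that theorem is a statement about commutators of powers of \emph{two} elements and cannot by itself show that a power $h^{2^{e-1}}$ of a \emph{single} element acts trivially on $V$ (applying it with $a=b=h$ gives a trivial commutator and no information). The paper's argument for (c) uses a different tool: for $h\in tU$ and $x\in U\cap L$ the commutator $[(hx)^{2^{e-1}},h^{2^{e-2}}]$ is a $T_1$-generator lying in $K$, hence in $T_1\cap K\leq M$; expanding modulo $M$ yields
$0=(h^{2^{e-1}-1}+\cdots+h+1)(h-1)^{2^{e-2}}=(h-1)^{3\cdot 2^{e-2}-1}$ on $V$.
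If $h$ induced an automorphism of order $2^e$ on $V$, Theorem~\ref{t211} (via its corollary for $p=2$) would force its minimal polynomial to have degree at least $3\cdot 2^{e-2}$, a contradiction; hence $h^{2^{e-1}}$ acts trivially on $V$ and so does its normal closure $T$. So the missing ingredient is the Hall--Higman minimal-polynomial lower bound, not Hoare's theorem; note also that one only needs to push the degree of the minimal polynomial below $3\cdot 2^{e-2}$, not all the way down to $2^{e-1}$ as your last paragraph suggests.
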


\begin{proof} Let $V=L/M$.  By \eqref{e-lte} for any $h\in tU$ and $u\in U$ we have
$(h-1)^{2^{e}-1}=1$ and $ ((hu)-1)^{2^{e}-1}=0$,
and the more so,
$$
0=((hu)-1)^{2^{e}-1} (h-1)^{2^{e-1}}.
$$
By Theorem~\ref{t-ho} applied with $a=h$, $b=hu$, and $n=m=e$, then $[h^{2^{e-1}},(hu)^{2^{e-1}}]$ acts trivially on~$V$. Therefore so does the subgroup $T_2$ and part (a) is proved.

Now suppose in addition that $T_2\cap K\leq M$. The commutator
$[(hux)^{2^{e-1}}, h^{2^{e-1}}]$ obviously belongs to~$T_2$. This commutator also belongs to $K$, since
$(hux)^{2^{e-1}}=h^{2^{e-1}}y$, where $y\in K$, since $u,x\in U\leq K$, and then
$$
[(hux)^{2^{e-1}}, h^{2^{e-1}}]=[h^{2^{e-1}}y, h^{2^{e-1}}]=[y, h^{2^{e-1}}]\in K.
$$
Thus,
$[(hux)^{2^{e-1}}, h^{2^{e-1}}]\in T_2\cap K\leq M$ by our assumption. For the same reasons,
$[(hu)^{2^{e-1}}, h^{2^{e-1}}]\in M$. As a result, modulo $M$ we have the congruences
\begin{align*}
1 &\equiv [(hux)^{2^{e-1}},\, h^{2^{e-1}}]\\&=[(hu)^{2^{e-1}}x^{(hu)^{2^{e-1}-1}}\cdots x^{hu}x,\, h^{2^{e-1}}]\\
                                               &\equiv [x^{(hu)^{2^{e-1}-1}}\cdots x^{hu}x, \,h^{2^{e-1}}]  .
\end{align*}
In terms of linear transformations this implies that
$$
0=((hu)^{2^{e-1}-1}+\cdots + hu +1)(h^{2^{e-1}}-1)= ((hu)-1)^{2^{e-1}-1} (h-1)^{2^{e-1}}.
$$
We also have $
(h-1)^{2^{e}-1}=0
$ by \eqref{e-lte}.
We now apply Theorem~\ref{t-ho} with $a= h$, $b=hu$, $m=e$, and $n=e-1$:
then $[h^{2^{e-1}},(hu)^{2^{e-2}}]$ acts trivially on~$V$. Therefore so does the subgroup $T_1$ and part (b) is proved.

Finally, suppose in addition that $T_1\cap K\leq M$. We have $[(hx)^{2^{e-1}}, h^{2^{e-2}}]\in T_1$. We also have $(hx)^{2^{e-1}}= h^{2^{e-1}}z$ for $z\in K$, so that $$
[(hx)^{2^{e-1}}, h^{2^{e-2}}]=[h^{2^{e-1}}z, h^{2^{e-2}}]=[z,h^{2^{e-2}}]\in K.
$$
 Hence, $[(hx)^{2^{e-1}}, h^{2^{e-2}}]\in T_1\cap K\leq M$.
As a result, modulo $M$ we have the congruences
\begin{align*}
1 &\equiv [(hx)^{2^{e-1}},\, h^{2^{e-2}}]\\&=[h^{2^{e-1}}x^{h^{2^{e-1}-1}}\cdots x^{h}x,\, h^{2^{e-2}}]\\
                                               &\equiv [x^{h^{2^{e-1}-1}}\cdots x^{h}x, \,h^{2^{e-2}}]  .
\end{align*}
In terms of linear transformations this implies that
\begin{align*}
0&=(h^{2^{e-1}-1}+\cdots + h +1)(h-1)^{2^{e-2}}\\
&= (h-1)^{2^{e-1}-1} (h-1)^{2^{e-2}} = (h-1)^{3\cdot 2^{e-2}-1}.
\end{align*}
 On the other hand, if $h$ induces an automorphism of order $2^e$, then by Theorem~\ref{t211} its minimal polynomial has degree at least $3\cdot 2^{e-2}$. Hence it follows that $h^{2^{e-1}}$ acts trivially on $V$, for every $h\in tU$. Therefore so does $T$, and part (c) is proved.

We set for convenience $T_0=T$ and $T_3=1$. By parts (a)--(c), for every $i\in \{0,1,2\}$ the subgroup $T_i$ acts trivially on every chief $2$-factor of $G$ contained in $K/(T_{i+1}\cap K)$ and covered by~$U$. By hypothesis, $K$ acts trivially on all other chief $2$-factors of $G$ contained in $K/(T_{i+1}\cap K)$. Therefore all chief $2$-factors of
$G$ contained in $(T_i\cap K)/(T_{i+1}\cap K)$ are central in $(T_i\cap K)/(T_{i+1}\cap K)$, which means that $(T_i\cap K)/(T_{i+1}\cap K)$ has a normal $2$-com\-ple\-ment.
 As a result, the $2$-length of
$$T\cap K\geq T_1\cap K\geq T_2\cap K\geq 1$$
is at most 3.
\end{proof}

We now complete the proof of Proposition~\ref{p-k} for $p=2$. We define $T$ as in Lemma~\ref{l-52}.
Consider $\bar G=G/T$ denoting also by the bar all images in~$\bar G$.
Given a chief $2$-factor $L/M$ of $G$,   either $L\leq T$, so that $L/M$ is  trivial in $\bar G$, or $M\geq T$, in which case $C_{\bar G}(\bar L/\bar M)=\overline{C_G(L/M)}$.
 Therefore $\bar U$ covers all chief $2$-factors of $G$ contained in $\bar K$ that are not central in~$\bar K$. Thus,  the hypotheses
of Proposition~\ref{p-k} hold with $\bar t \bar U$ of exponent dividing $2^{e-1}$ by the construction of~$T$. By induction, the $2$-length of $\bar K\cong K/(K\cap T)$ is at most $3(e-1)+1$. By Lemma~\ref{l-52} the $2$-length of $K\cap T$ is at most 3, and the result follows.
\end{proof}

\begin{proof}[Proof of Theorem~\ref{solu}] Recall that $w$ is a multilinear commutator word of weight $n$, while $P_w$ is the set of $w$-values on elements of a subgroup $P$ of a group $G$ and $W_G(P)=\langle P_w^{\;G}\cap P\rangle$. We have a normal $p$-soluble subgroup $H$ of a finite group $G$, and a Sylow $p$-subgroup $P$ of $H$ such that for some $t\in G$ the coset $tW_G(P)$ is of exponent dividing $p^a$. Our aim is bounding the $p$-length of $H$ in terms of $a$ and $n$ only.
We can obviously assume that $G=H\langle t\rangle$ (by Lemma~\ref{l-w}(a)); in particular, then $G$ is $p$-soluble, which we assume in what follows.

\begin{lemma}\label{l-chief}
If $L/M$ is a chief $p$-factor of $G$ contained in $H$ and $H/C_H(L/M)$ has $p$-length at least $n+1$, then $W_G(P)$ covers $L/M$, that is, $(W_G(P)\cap L)M=L$.
\end{lemma}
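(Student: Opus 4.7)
My plan is to first reduce to the case $M=1$ by passing to $\bar G = G/M$. By Lemma~\ref{l-w}(b) we have $W_{\bar G}(\bar P) = W_G(P)M/M$, and by Dedekind the claim $(W_G(P)\cap L)M = L$ is equivalent to $\bar L \leq W_{\bar G}(\bar P)$ in $\bar G$, where $\bar L = L/M$ is a minimal normal elementary abelian $p$-subgroup of $\bar G$, contained in $\bar P = PM/M$ (since $\bar L$ is a normal $p$-subgroup of $\bar H$ and hence lies in every Sylow $p$-subgroup). I continue in $\bar G$, dropping the bars.

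The first key computation is that for $x\in P_w$ and $l\in L$, both $x$ and $l$ lie in the subgroup $P$, so $x^l\in P$, hence $x^l\in P_w^G\cap P\subseteq W_G(P)$; combined with $x\in W_G(P)$ this gives $[l,x] = (x^l)^{-1}x\in W_G(P)$. Since $L\trianglelefteq G$, also $[l,x]\in L$. Thus $[L,P_w]\subseteq L\cap W_G(P)$. By Lemma~\ref{delta} every $\delta_n$-value is a $w$-value, and the set $P_{\delta_n}$ is $P$-conjugation invariant (since $\delta_n(a_1,\ldots)^g = \delta_n(a_1^g,\ldots)$), so using commutator identities with $L$ abelian one extends this to $[L,P^{(n)}]\subseteq L\cap W_G(P)$. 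The same computation, applied to an arbitrary $w\in W_G(P)$ in place of $x\in P_w$, gives the stronger property $[L,W_G(P)]\subseteq L\cap W_G(P)$.

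Next I would apply the Hall--Higman bound (for $p$ odd, as in Wilson \cite{wil83}) together with Bryukhanova's refinement (for $p=2$) to the $p$-soluble quotient $H/C_H(L)$: since $L$ is $G$-irreducible, the standard argument shows $O_p(H/C_H(L))=1$, so the $p$-length of $H/C_H(L)$ is bounded by the derived length of its Sylow $p$-subgroup $P/C_P(L)$. From the hypothesis that this $p$-length is at least $n+1$, we obtain $(P/C_P(L))^{(n)}\neq 1$, i.e.\ $P^{(n)}$ does not centralize $L$ and $[L,P^{(n)}]\neq 1$. Consequently $L\cap W_G(P)$ is nontrivial.

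The main obstacle is to promote this nontrivial lower bound to the equality $L=L\cap W_G(P)$. For this I would combine two ingredients. Lemma~\ref{lbw} applied to the $G$-normal subgroup $L$ gives $[L,w(G)]\leq\langle G_w\cap L\rangle$, a $G$-invariant subgroup of $L$ that contains $[L,P^{(n)}]\neq 1$; by chief-factor minimality it must equal $L$. On the other hand, the subgroup $L\cap W_G(P)$ is $N_G(P)$-invariant (as $W_G(P)$ is normalised by $N_G(P)$ and $L\trianglelefteq G$), $L$-invariant (as $L$ is abelian), and $W_G(P)$-invariant by the computation above. The delicate step is to use the Frattini decomposition $G = H N_G(P)$ together with the fact that all Sylow $p$-subgroups of $H$ contain $L$ to upgrade the $N_G(P)$-invariance of $L\cap W_G(P)$ into full $G$-invariance; then minimality of the chief factor $L$ forces $L\cap W_G(P)=L$, as required. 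I expect this last Frattini/Sylow argument to be the most delicate point of the proof.
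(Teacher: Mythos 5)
Your reduction to $M=1$, the Hall--Higman/Bryukhanova step giving $[L,P^{(n)}]\ne 1$, and the observation that $[L,P_w]\subseteq L\cap W_G(P)$ are all sound and agree with the paper's setup. The gap is in the final promotion from $L\cap W_G(P)\ne 1$ to $L\le W_G(P)$. Your proposed Frattini upgrade cannot work: for $h\in H$ one has $(L\cap W_G(P))^h=L\cap W_G(P^h)$, and $W_G(P^h)$ is a genuinely different subgroup from $W_G(P)$, so there is no reason for $L\cap W_G(P)$ to be $H$-invariant (its $G$-invariance is essentially the conclusion of the lemma, so assuming it is circular). Nor does passing to the normal closure help: an element of $L\cap W_G(P)$ is a \emph{product} of elements of $P_w^{\;G}\cap P$, and its $G$-conjugate, though it lies in $L\le P$, is a product of elements of $P_w^{\;G}\cap P^g$, which need not lie in $W_G(P)$. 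Your application of Lemma~\ref{lbw} to $L\trianglelefteq G$ only produces $w$-values of $G$ inside $L$, which are not conjugates of $w$-values of $P$ and so tell you nothing about $W_G(P)$.

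The missing idea, which is the crux of the paper's proof, is to apply Lemma~\ref{lbw} with the \emph{Sylow subgroup} as the ambient group: since $L/M$ is normal in $PM/M$ and $w(P)\ge P^{(n)}$ does not centralize $L/M$, Lemma~\ref{lbw} gives a single $w$-value $v$ on elements of $P$ whose image in $L/M$ is nontrivial. Now the $G$-conjugates $v^{g_i}$ have images generating the chief factor $L/M$, and after adjusting each by an element $l_i\in L$ (which does not change the image modulo $M$, as $L/M$ is abelian) one may assume $v^{g_il_i}\in P\cap L$; each such element is a single member of $P_w^{\;G}\cap P$ and hence lies in $W_G(P)$ by definition. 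This is exactly the point your argument needs: conjugates of an individual $w$-value of $P$ stay inside the normal generating set of $W_G(P)$ once pushed into $P$, whereas conjugates of general elements of $W_G(P)$ do not. With this substitution your outline closes up; without it the proof does not go through.
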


\begin{proof}
By the theorems of Hall and Higman \cite[Theorem~A]{ha-hi} for $p\ne 2$ and of
Bryukhanova~\cite{bry81} for $p=2$, the image of $P^{(n)}$ in $H/C_H(L/M)$ is nontrivial. Since $w(P)\geq P^{(n)}$ by Lemma~\ref{delta}, it follows that the image of $w(P)$ in $H/M$ does not centralize $L/M$. Since $L/M$ is a normal subgroup of the image of $P$ in $G/M$, by Lemma~\ref{lbw} there is a value $v$ of the word $w$ on elements of $P$ such that the image of $v$ is nontrivial in  $L/M$. Since $L/M$ is a chief factor of $G$, there are conjugates $v^{g_{i}}$ for some $g_i\in G$ whose images generate $L/M$. There are also elements $l_i\in L$ such that $v^{g_{i}l_i}\in P\cap L$, since $P\cap L$ is a Sylow $p$-subgroup of~$L$. By definition, then $v^{g_{i}l_i}\in W_G(P)$, and since $v^{g_{i}l_i}M=v^{g_{i}}M$, the result follows.
\end{proof}

We now finish the proof of Theorem~\ref{solu}.
Let ${\mathscr D}$ be the set of all chief $p$-factors $V$ of $G$ such that $H/C_H(V)$ has $p$-length at most~$n$. We define the subgroup
$$
D=\bigcap _{V\in {\mathscr D}} C_H(V).
$$
Then $H/D$ also has $p$-length at most~$n$. It remains to prove that the $p$-length of $D$ is bounded. This follows from Proposition~\ref{p-k} applied with $K=D$ and $U=W_G(P)\cap D$. The required properties hold by Lemma~\ref{l-chief} and because every chief $p$-factor of $G$ contained in $D$ that belongs to ${\mathscr D}$ is central in $D$ by construction.
\end{proof}

\begin{proof}[Proof of Corollary~\ref{c-exp-nil}] We have a $p$-soluble group $G$ whose Sylow $p$-subgroup $P$ is such that
$(\gamma _{c+1}(P))^{p^a}=1$; our aim is a bound for the $p$-length of $G$ in terms of $c$ and~$a$. If we set $w=[x_1,x_2,\dots,x_{c+1}]$, then $w(P)=\gamma _{c+1}(P)$. The subgroup $W_G(P)\leq P$ is generated by conjugates of $w$-values on elements of~$P$. By hypothesis these conjugates are of orders dividing $p^a$. Since $P/\gamma _{c+1}(P)$ is nilpotent of class $c$, the group $W_G(P)/\gamma _{c+1}(P)$ has exponent dividing $p^{ac}$. Therefore the group $W_G(P)$ has exponent dividing $p^{ac+a}$. The result now follows from Theorem~\ref{solu} applied with $H=G$ and $t=1$.
 \end{proof}

\section{Length results for profinite groups}
\label{s-lpro}

In this section we derive Theorems~\ref{profi} and~\ref{pro-p-solu} from their finite analogues proved in \S\S\,\ref{s-nsl} and~\ref{s-pl}. This is done by standard arguments; we find it convenient to refer to some lemmas in Wilson's paper \cite{wil83}.

\begin{lemma}[{\cite[Lemma~2]{wil83}}]\label{l-wil2}
Let $\frak X_1,\dots ,\frak X_m$ be classes of finite groups closed with respect to normal subgroups and subdirect products, and let $\frak X$ be the class of groups $X$ having a series
$$
1=X_0\leq X_1\leq \dots\leq X_m=X
$$
with $X_i/X_{i-1}\in \frak X_i$ for each~$i$. Then every pro-$\frak X$ group $G$ has a series
$$
1=G_0\leq G_1\leq \dots\leq G_m=G
$$
of closed characteristic subgroups such that $G_i/G_{i-1}$ is a pro-$\frak X_i$ group for each~$i$.
\end{lemma}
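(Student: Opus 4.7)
The plan is to construct the desired series inside $G$ by first producing a canonical characteristic series in every finite quotient $G/N$ and then passing to an inverse limit. For a finite group $X\in\frak X$, I would build the series top-down: set $X_m=X$ and, assuming $X_i$ has already been defined as a characteristic subgroup of $X$ which itself admits a length-$i$ series with factors in $\frak X_1,\dots,\frak X_i$, let
\[
X_{i-1}=\bigcap\bigl\{M\lhd X : M\le X_i \text{ and } X_i/M\in\frak X_i\bigr\}.
\]
The collection intersected is nonempty, as any length-$i$ series of $X_i$ supplies such an $M$, and it is stable under $\mathrm{Aut}(X)$, so $X_{i-1}$ is characteristic in $X$. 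Since $X_i/X_{i-1}$ embeds as a subdirect product of the quotients $X_i/M$ in the intersection, closure of $\frak X_i$ under subdirect products gives $X_i/X_{i-1}\in\frak X_i$. Intersecting the assumed length-$i$ series of $X_i$ with $X_{i-1}$ (which is normal in $X$, hence in $X_i$) yields a length-$(i-1)$ series for $X_{i-1}$ whose $j$-th factor embeds as a normal subgroup of the corresponding factor of the ambient series; closure of $\frak X_j$ under normal subgroups then places each new factor in $\frak X_j$.

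The second stage is functoriality of this canonical construction with respect to surjections: if $\pi\colon X\twoheadrightarrow Y$ is a surjective homomorphism of finite $\frak X$-groups, then $\pi(X_i)=Y_i$ for every~$i$. I would prove this by downward induction on $i$ directly from the minimality property that characterizes $X_{i-1}$. With this in hand, apply the finite construction inside every quotient $G/N$, as $N$ ranges over the open normal subgroups of $G$, and lift the $i$-th term to an open characteristic subgroup $G_i(N)\le G$ (it is open because it contains~$N$, and characteristic in $G$ because its image in $G/N$ is characteristic and its preimage under any automorphism of $G$ projects to the same canonical subgroup of $G/N$). Functoriality guarantees that $N_1\le N_2$ implies $G_i(N_1)\le G_i(N_2)$, so $\{G_i(N)\}_N$ is an inverse system of closed subgroups; set $G_i=\bigcap_N G_i(N)$. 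Each $G_i$ is then closed and characteristic in $G$, and $G_i/G_{i-1}$ is pro-$\frak X_i$, since any continuous finite quotient of $G_i/G_{i-1}$ factors through some $G_i(N)/G_{i-1}(N)\in\frak X_i$.

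The main obstacle I anticipate is the functoriality step: one must confirm that the intersection defining $X_{i-1}$ is genuinely preserved under surjective homomorphisms of finite $\frak X$-groups. This forces the classes $\frak X_j$ to be reasonably well-behaved under quotients, as well as under normal subgroups and subdirect products; this is standard in the reading of ``closed with respect to normal subgroups'' used here, but the entire coherence of the inverse system $\{G_i(N)\}_N$, and hence the very definition of the $G_i$ as a series rather than a mere descending family, depends squarely on this point.
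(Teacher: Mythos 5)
The paper does not prove this lemma --- it is quoted verbatim from Wilson's paper \cite{wil83} --- so your argument can only be judged on its own terms; the residual-tower construction you describe is indeed the standard route. Two points need attention. The lesser one: in the definition of $X_{i-1}$ you intersect over $M\trianglelefteq X$, but the witness you offer for nonemptiness (the penultimate term $Z_{i-1}$ of a length-$i$ series of $X_i$) is in general only normal in $X_i$, not in $X$. Either intersect over $M\trianglelefteq X_i$ instead (the intersection is still characteristic in $X$, because $X_i$ is and automorphisms of $X$ permute the admissible $M$), or replace $Z_{i-1}$ by the intersection of its $X$-conjugates, which is admissible by subdirect-product closure. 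You also need $X_{i-1}\le Z_{i-1}$ for the intersected series to collapse to length $i-1$ rather than $i$; this comes for free once $Z_{i-1}$ (or its normal core) is among the subgroups being intersected.

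The more serious point is the one you flag yourself, and your resolution of it is wrong: closure under normal subgroups does \emph{not} subsume closure under quotients, and without quotient closure the exact functoriality $\pi(X_i)=Y_i$ is not available (the inclusion $Y_{i-1}\le\pi(X_{i-1})$ would require $Y_i/\pi(X_{i-1})$, a \emph{quotient} of $X_i/X_{i-1}$, to lie in $\frak X_i$). The rescue is that you never need equality: for $\{G_i(N)\}_N$ to be a descending family it suffices that $\pi(X_{i-1})\le Y_{i-1}$, and this single inclusion does follow from the stated hypotheses. Indeed, assuming inductively that $\pi(X_i)\le Y_i$, the subgroup $M=X_i\cap\pi^{-1}(Y_{i-1})$ is normal in $X$, lies in $X_i$, and $X_i/M\cong\pi(X_i)Y_{i-1}/Y_{i-1}$ is a normal subgroup of $Y_i/Y_{i-1}\in\frak X_i$, hence lies in $\frak X_i$; so $X_{i-1}\le M\le\pi^{-1}(Y_{i-1})$. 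For the same reason you should not argue that a finite continuous quotient of $G_i/G_{i-1}$ ``factors through'' some $G_i(N)/G_{i-1}(N)$, since a quotient of an $\frak X_i$-group need not lie in $\frak X_i$; instead write $G_i/G_{i-1}$ as the inverse limit of the directed system $G_i/(G_i\cap G_{i-1}(N))$, each term of which is isomorphic to the normal subgroup $G_iG_{i-1}(N)/G_{i-1}(N)$ of $G_i(N)/G_{i-1}(N)$ and so lies in $\frak X_i$. With these corrections your proof goes through. (A small further remark: an individual $G_i(N)$ need not be characteristic in $G$; rather $\alpha(G_i(N))=G_i(\alpha(N))$ for a continuous automorphism $\alpha$, which is exactly what makes the intersection $G_i$ characteristic.)
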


\begin{lemma}[{\cite[Lemma~3]{wil83}}]\label{l-wil3}
Let $\frak Y$ be a class of groups consisting of non-abelian finite simple groups, and let $\frak X$ be the class of finite direct products of $\frak Y$-groups. Then a profinite group $G$ is a pro-$\frak X$ group if and only if it is isomorphic (as a topological group) to a Cartesian product of $\frak Y$-groups.
\end{lemma}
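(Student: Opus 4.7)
The plan is to prove both directions of the biconditional. The easy direction goes as follows: a Cartesian product $G=\prod_{i\in I}S_i$ of discrete finite groups is profinite in the product topology, and if each $S_i\in\frak Y$ then the finite partial products $\prod_{i\in F}S_i$ (for finite $F\subseteq I$) are $\frak X$-groups exhibiting $G$ as their inverse limit; hence $G$ is a pro-$\frak X$ group.

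For the converse, suppose $G$ is a pro-$\frak X$ group. The idea is to identify a canonical family of simple quotients of $G$ and show that $G$ is their full Cartesian product. Let $\mathcal{N}$ denote the set of open normal subgroups $N$ of $G$ with $G/N\in\frak Y$, write $S_N=G/N$, and consider the continuous homomorphism
$$
\phi\colon G\longrightarrow \prod_{N\in\mathcal{N}}S_N
$$
given by the product of the quotient maps. The task is then to show that $\phi$ is a continuous bijection; since $G$ is compact and the target Hausdorff, such a $\phi$ is automatically a homeomorphism and hence an isomorphism of topological groups.

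For injectivity I would show that every open normal subgroup $M$ of $G$ is a finite intersection of members of $\mathcal{N}$: since $G$ is pro-$\frak X$, the quotient $G/M$ lies in $\frak X$, so it is a direct product of simple groups from $\frak Y$, and the preimages of the kernels of the factor projections are exactly elements of $\mathcal{N}$ whose intersection is $M$. Consequently $\bigcap_{N\in\mathcal{N}}N=\bigcap_{M\text{ open normal}}M=1$. For surjectivity, the main observation is a Chinese-remainder-style fact: if $N\neq N'$ both lie in $\mathcal{N}$ then $NN'=G$, because $N$ is a maximal normal subgroup (as $G/N$ is simple) and $NN'$ strictly contains $N$. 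Iterating, for any finite subset $\{N_1,\ldots,N_k\}\subseteq\mathcal{N}$ the projection $G\to S_{N_1}\times\cdots\times S_{N_k}$ is onto, showing that $\phi(G)$ is dense in $\prod_{N}S_N$; compactness of $G$ then forces $\phi(G)$ to be the whole product.

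The step I expect to be most delicate is surjectivity: one must calibrate $\mathcal{N}$ so that $\phi$ actually lands in the full Cartesian product (not a proper closed subgroup) and propagate the two-factor coprimality $NN'=G$ to arbitrary finite subsets of $\mathcal{N}$. Once this is in hand, assembling injectivity, surjectivity, continuity, and compactness of $G$ into a topological isomorphism is routine.
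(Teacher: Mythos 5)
The paper quotes this lemma from Wilson \cite{wil83} without proof, and your argument is correct and is essentially Wilson's: map $G$ into the product of its simple quotients lying in $\frak Y$, get injectivity from the fact that every open normal subgroup is a finite intersection of kernels of such quotients, and get surjectivity from density (via the maximal-normal-subgroup Chinese-remainder argument) plus compactness. The only step worth making fully explicit is that $\frak X$ is closed under quotients --- normal subgroups of a finite direct product of non-abelian simple groups are exactly the subproducts --- which is what justifies both the claim that $G/M\in\frak X$ for \emph{every} open normal $M$ and the propagation of $NN'=G$ to surjectivity onto arbitrary finite subproducts.
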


Recall that $w$ is a multilinear commutator word of weight $n$,
and $W_G(P)$ is the closed subgroup generated by all elements of $P$ that are conjugate in $G$ to $w$-values on elements of~$P$. We often write $W(P)$ in place of $W_G(P)$. The next lemma is similar to \cite[Lemma~1]{wil83}

\begin{lemma}\label{l-coset}
Let $G$ be a profinite group and suppose that for some prime $p$ the subgroup $W(P)$ of a $p$-Sylow subgroup $P$ of $G$ is a periodic group. Then there is an open normal subgroup $H$ of $G$ and an element $t\in W(P)$ such that the coset $t(H\cap W(P))$ has finite exponent.
\end{lemma}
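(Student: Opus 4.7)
The plan is to run a standard Baire category argument on the compact topological group $W(P)$, parallel to the classical one used for periodic profinite groups. The key observation is that $W(P)$, being a closed subgroup of the profinite group $G$, is itself a compact Hausdorff topological group and hence a Baire space.

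For each positive integer $n$, set
$$W_n=\{x\in W(P):x^n=1\}.$$
Continuity of the power map $x\mapsto x^n$ shows that each $W_n$ is closed in $W(P)$, and the hypothesis that $W(P)$ is periodic gives $W(P)=\bigcup_{n\ge 1}W_n$. By the Baire category theorem at least one $W_n$ has nonempty interior in $W(P)$. Hence there exist an integer $n$, an element $t\in W(P)$, and an open subgroup $U$ of $W(P)$ such that
$$tU\subseteq W_n,\qquad\text{i.e.,}\qquad (tu)^n=1\ \text{for every }u\in U.$$

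To conclude, I would translate the open subgroup $U$ of $W(P)$ back to something open and normal in $G$. Since $U$ is open in the subspace topology of $W(P)$, there is an open subgroup $V$ of $G$ with $W(P)\cap V\subseteq U$. Because $G$ is profinite, $V$ contains an open normal subgroup $H$ of $G$. Then $H\cap W(P)\subseteq U$, and therefore
$$t(H\cap W(P))\subseteq tU\subseteq W_n,$$
so the coset $t(H\cap W(P))$ has exponent dividing $n$, as required.

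The only genuinely delicate point is the Baire step, which requires nothing more than compactness of $W(P)$ and continuity of the $n$-th power map; the rest is the routine refinement of an open subgroup of a closed subgroup to the trace of an open normal subgroup of the ambient profinite group. I do not foresee a serious obstacle.
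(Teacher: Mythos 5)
Your proof is correct and follows essentially the same route as the paper: a Baire category argument on the closed sets $\{x\in W(P):x^n=1\}$ covering the compact group $W(P)$, yielding a coset of an open subgroup of bounded exponent, followed by the routine refinement to the trace of an open normal subgroup of $G$. No gaps.
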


\begin{proof} Write $S_e =\{g\in W(P)\mid g^e =1 \}$ for each integer $e\geq 1$. Each set $S_e$ is closed in the subgroup topology and $W(P) =\bigcup _{e=1}^{\infty} S_e$. It follows from Baire's category theorem (see for example \cite[p. 200]{ke}) that some set $S_e$ has non-empty interior and therefore contains a coset $tS$ for some open subgroup $S$ of $W(P)$. Since $S$ is open, there is an open subgroup $H$ of $G$ such that $H\cap W(P)\leq S$; and clearly $H$ can be chosen to be normal in~$G$. The result follows.
\end{proof}

\begin{proof}[Proof of Theorem~\ref{profi}] We have a profinite group $G$ such that $W(P)$ is locally finite for a $p$-Sylow subgroup $P$ of $G$ for some $p\in\pi(G)$. We need to show that $G$ has a finite characteristic series in which each factor either is pro-$p$-soluble or is isomorphic to a Cartesian product of
non-abelian finite simple groups of order divisible by~$p$. By Lemma~\ref{l-coset}
there is an open normal subgroup $H$ of $G$ and an element $t\in W(P)$ such that the coset $t(H\cap W(P))$ has finite exponent, which is of course a $p$-power, say, $p^a$. Consider any finite continuous homomorphic image $\bar G$ of $G$ denoting by bars the images of elements or subgroups. Then $\overline{P\cap H}$ is a Sylow $p$-subgroup of $\bar H$ and $\bar t W_{\bar G}(\overline{P\cap H})\subseteq \overline{tW_G(P)}$ by Lemma~\ref{l-w}, so that also $(\bar t W_{\bar G}(\overline{P\cap H}))^{p^a}=1$. Therefore $\bar G$ satisfies the hypotheses of Theorem~\ref{gera}, by which the non-$p$-soluble length of $\bar H$ is bounded in terms of $n$ (weight of $w$) and~$a$. Hence $\bar G$ has a series
$$
1=H_0\leq H_1\leq \dots\leq H_m=\bar G
$$
of length $m$ bounded in terms of $a$, $n$, and $|G/H|$ such that
 $H_i/H_{i-1}$ is $p$-soluble for $i$ odd and is a direct product of non-abelian finite simple groups of order divisible by $p$ for $i$ even. By Lemma~\ref{l-wil2} we obtain that $G$ has a series of closed characteristic subgroups
$$
1=G_0\leq G_1\leq \dots\leq G_m= G
$$
of the same finite length $m$ such that
 $G_i/G_{i-1}$ is pro-$p$-soluble for $i$ odd and is a projective limit of direct products of non-abelian finite simple groups of order divisible by $p$ for $i$ even. The factors of the second type are isomorphic to Cartesian products of non-abelian finite simple groups of order divisible by $p$ by Lemma~\ref{l-wil3}.
\end{proof}

\begin{proof}[Proof of Theorem~\ref{pro-p-solu}]
We have a pro-$p$-soluble group $G$ such that $W(P)$ is locally finite for a $p$-Sylow subgroup $P$ of~$G$. We need to show that $G$ has a finite series of closed characteristic subgroups with factors either pro-$p$ groups or pro-$p'$ groups.
By Lemma~\ref{l-coset}
there is an open normal subgroup $H$ of $G$ and an element $t\in W(P)$ such that the coset $t(H\cap W(P))$ has finite $p$-power exponent $p^a$, say. Just as above in the proof of Theorem~\ref{profi}, in any finite continuous homomorphic image $\bar G$ of $G$, the subgroup $\overline{P\cap H}$ is a Sylow $p$-subgroup of $\bar H$ and $\bar t W_{\bar G}(\overline{P\cap H})\subseteq \overline{tW_G(P)}$ by Lemma~\ref{l-w}, so that also $(\bar t W_{\bar G}(\overline{P\cap H}))^{p^a}=1$. Therefore $\bar G$ satisfies the hypotheses of Theorem~\ref{solu}, by which $\bar H$ has $p$-length bounded in terms of $n$ and~$a$. Hence $\bar G$ has a series
$$
1=H_0\leq H_1\leq \dots\leq H_m=\bar G
$$
of length $m$ bounded in terms of $a$, $n$, and $|G/H|$ such that
 $H_i/H_{i-1}$ is a $p'$-group for $i$ odd, and a $p$-group for $i$ even. By Lemma~\ref{l-wil2} we obtain that $G$ has a series of closed characteristic subgroups
$$
1=G_0\leq G_1\leq \dots\leq G_m= G
$$
of the same finite length $m$ such that
 $G_i/G_{i-1}$ is pro-$p'$ group for $i$ odd, and a pro-$p$ group for $i$ even.
\end{proof}

\begin{proof}[Proof of Corollary~\ref{prosolu}]
We have a prosoluble group $G$ such that $\pi(G)$ is finite and $W(P)$ is locally finite for any $p$-Sylow subgroup $P$ of $G$, for any $p\in\pi(G)$. We need to show that $ph(G)$ is finite, that is, $G$ has a finite characteristic series with pronilpotent factors.
Fix a prime $p\in \pi (G)$. By Theorem~\ref{pro-p-solu} the group
 $G$ has a finite series of closed characteristic subgroups
in which each factor is either a pro-$p$ group, or a pro-$p'$ group. Each pro-$p'$ factor in this series has a smaller set of prime divisors, so the result follows by an easy induction on the cardinality of $\pi (G)$, since the hypotheses are inherited by subgroups and homomorphic images by Lemma~\ref{l-w}.
\end{proof}

\section{Main results}

In what follows we use without special references Zelmanov's theorem \cite{zel92} on local finiteness of periodic profinite groups. In particular, when proving local finiteness of a subgroup, we can freely factor out any periodic closed normal  subgroup.

Recall that $w$ is a multilinear commutator word of weight $n$, the set of $w$-values on a subgroup $P\leq G$ is denoted by $P_w$, the corresponding closed verbal subgroup is $w(P)$, and $W_G(P)$ (often denoted by $W(P)$) is the closed subgroup generated by all elements of $P$ that are conjugate in $G$ to elements of $P_w$. For brevity we denote by $\mathscr{X}$ the class of all profinite groups $G$ in which all $w$-values have finite order and $w(P)$ is periodic for any $p$-Sylow subgroup of $G$ (for any prime $p\in \pi (G)$). We denote by $\mathscr{Y}$ the class of all profinite groups $G$ in which all $w$-values have finite order and $W(P)$ is periodic for any $p$-Sylow subgroup of $G$ (for any prime $p\in \pi (G)$). Clearly, $\mathscr{Y}\subseteq \mathscr{X}$. Both classes $\mathscr{X}$ and $\mathscr{Y}$ are closed with respect to subgroups and homomorphic images, which is obvious for $\mathscr{X}$, and follows from Lemma~\ref{l-w} for $\mathscr{Y}$.
This is the main reason why we found it easier to work with these classes rather than the smaller class of groups satisfying the hypotheses of the main Theorem~\ref{main}.

The (virtually) soluble case of Theorem~\ref{main} is covered by the following theorem in \cite{dms3}, which is also used throughout what follows.

\begin{theorem}[{\cite[Theorem~3]{dms3}}]\label{dems}
Let $w$ be a multilinear commutator word. Let $G$ be a virtually soluble profinite group in which all $w$-values have finite order. Then $w(G)$ is locally finite and has finite exponent.\end{theorem}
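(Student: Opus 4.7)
The plan is to combine a Baire category argument with known exponent bounds for verbal subgroups of finite soluble groups, and to close with Zelmanov's theorem~\cite{zel92}.

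First I would use the virtual solubility of $G$ to reduce to the prosoluble case: take an open normal prosoluble subgroup $N\leq G$; since $[G:N]$ is finite, a standard bookkeeping argument together with Lemma~\ref{lbw} reduces the problem to showing that $w(N)$ has finite exponent (local finiteness will then come for free from Zelmanov, as finite exponent implies periodic). So assume $G$ itself is prosoluble.

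The next step is the Baire category move. The evaluation map $\omega\colon G^n\to G$, $\omega(x_1,\dots,x_n)=w(x_1,\dots,x_n)$, is continuous, and the sets $S_e=\{g\in G:g^e=1\}$ are closed. Since every $w$-value has finite order, $G^n=\bigcup_{e\geq 1}\omega^{-1}(S_e)$, and Baire's theorem applied to the compact space $G^n$ produces an integer $e_0$ and a non-empty open set $U\subseteq G^n$ such that every $w$-value on a tuple from $U$ has order dividing $e_0$. A translation trick inside $U$, exploiting the fact that $w$ is a commutator in $n$ separate variables, then upgrades this to an open normal subgroup $H\leq G$ and an integer $e$ such that every $w$-value on tuples from $H$ has order dividing~$e$.

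With bounded exponent of $w$-values on $H$ in hand, I would appeal to known bounds: in any finite soluble continuous quotient $\bar H$ of $H$, every $w$-value has order dividing $e$, so by the Shumyatsky-type exponent bounds for multilinear commutator words in finite soluble groups (of the kind used in~\cite{austral} and ultimately rooted in the Hall--Higman theory), the exponent of $w(\bar H)$ is bounded by a function $f(e,w)$ depending only on $e$ and the weight of $w$. Passing to the inverse limit, $w(H)$ has exponent dividing $f(e,w)$; a last push using the finite index of $H$ in $G$ and Lemma~\ref{lbw} lifts the bound to $w(G)$. Zelmanov's theorem then gives local finiteness.

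The main obstacle I anticipate is the Baire step: converting the open \emph{set of tuples} produced by Baire into an open \emph{subgroup} on which all $w$-values have uniformly bounded order genuinely uses the multilinear commutator structure of $w$, and is the subtle algebraic heart of the argument. The prosoluble reduction and the subsequent appeal to the finite-group exponent bounds are, by comparison, fairly routine.
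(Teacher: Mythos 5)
First, note that the paper does not prove this statement at all: it is imported verbatim as \cite[Theorem~3]{dms3}, so there is no in-paper proof to match your proposal against; the judgement below is on the mathematics of your plan itself.

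There is a genuine gap, and it sits exactly where you flagged it: the ``translation trick''. Baire's theorem applied to $G^n=\bigcup_e\omega^{-1}(S_e)$ gives you an integer $e_0$ and a basic open set $a_1H\times\dots\times a_nH$ on which all $w$-values have order dividing $e_0$; it does \emph{not} give you the tuple $(1,\dots,1)$ in that set, and there is no routine way to pass from bounded orders of $w(a_1h_1,\dots,a_nh_n)$ to bounded orders of $w(h_1,\dots,h_n)$ for $h_i\in H$. Commutator expansions express $w(a_1h_1,\dots,a_nh_n)$ as a product involving $w$-values on the $h_i$, but knowing the order of a product says nothing about the orders of its factors in a profinite group. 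The known way to exploit exactly this kind of coset information for a multilinear commutator is \cite[Lemma~2.2]{mz} (used in Lemma~\ref{sta00} of the present paper): from $w(a_1h_1,\dots)=1$ for all $h_i\in H$ one deduces a \emph{structural} conclusion (solubility of the image of $H$ modulo a suitable centralizer), not a bound on the orders of $w$-values of $H$. Indeed, the actual proof in \cite{dms3} does not run a single global Baire argument followed by a finite-group exponent theorem; it proceeds by induction on the derived length of the soluble normal subgroup, applying Baire at each abelian layer to an exponent function of the type $e(g_1,\dots,g_{2^k})=\exp\bigl(M\cap w(M\langle g_1,\dots,g_{2^k}\rangle)\bigr)$ and then drawing structural conclusions, precisely in the style of Lemma~\ref{sta00}. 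So the ``subtle algebraic heart'' you identify is not a step you have deferred --- it is the theorem, and in the form you state it (bounded order of all $w$-values on an open subgroup) it is not what the argument can deliver.

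Two smaller points. Even granting bounded orders of $w$-values on an open subgroup $H$, the finite-group input you invoke must bound $\exp w(\bar H)$ in terms of $e$, $w$ \emph{and} the derived length of $\bar H$; that extra parameter is available here because $G$ is virtually soluble, but \cite{austral} is not the right citation (its hypothesis is a law $w^e=1$ on nilpotent subgroups, not on the whole group). And the opening reduction to the prosoluble case is also not pure bookkeeping: $w(G)$ is generated by $w$-values of $G$, not of $N$, so one needs Lemma~\ref{lbw} together with the finiteness of $w(G)N/N$ to control $w(G)\cap N$; this is doable but should be written out.
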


We approach the proof of the main result in a series of lemmas. 

\begin{lemma}\label{sta00}
Let $G\in\mathscr{X}$ be a group having a closed
normal abelian subgroup $M$ such that $G/M$ is locally finite. Then $G$ possesses a series of closed normal subgroups $1\leq T\leq C\leq G$ such that $T$ has finite exponent, $[M,C]\leq T$, and $G/C$ is virtually soluble.
\end{lemma}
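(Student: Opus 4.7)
\emph{Proof plan.}
The plan is to choose $C = M\cdot w(G)$ and $T = [M,w(G)]$ and verify that these satisfy all the required properties. Since the law $w=1$ defines a soluble variety of derived length at most~$n$, the quotient $G/w(G)$ is soluble, and so $G/C = G/(M\cdot w(G))$ is a soluble quotient of $G/w(G)$; in particular $G/C$ is virtually soluble. Because $M$ is abelian, $[M,C] = [M,\, M\cdot w(G)] = [M,w(G)] = T$, so the containment $[M,C]\le T$ holds trivially (in fact with equality), and the nontrivial task is to show that $T$ has finite exponent.

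By Lemma~\ref{lbw} applied to the normal subgroup $M$, the closed subgroup $T = [M,w(G)]$ is contained in the closed subgroup $T'$ of $M$ generated by those $w$-values of $G$ that lie in $M$. Since $G\in\mathscr{X}$, every such $w$-value has finite order. Because $M$ is abelian, $T'$ is a closed abelian subgroup topologically generated by torsion elements; and in such a profinite abelian group the closed subgroup generated by torsion elements has finite exponent precisely when the orders of the generators admit a uniform upper bound. Thus the problem reduces to producing a uniform bound on the orders of the $w$-values of $G$ that land in~$M$.

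To produce such a bound I would use a Baire category argument in the spirit of Lemma~\ref{l-coset}. Consider the closed set $A = w^{-1}(M)\subseteq G^n$ (where $n$ is the weight of~$w$), and filter it by the closed subsets $A_e = \{(g_1,\dots,g_n)\in A : w(g_1,\dots,g_n)^e = 1\}$; since all $w$-values in $G$ have finite order, $A = \bigcup_{e\ge 1}A_e$. By Baire some $A_e$ has nonempty relative interior in $A$, yielding a nonempty relatively open subset $A\cap U$, for some open $U\subseteq G^n$, on which all $w$-values have exponent dividing~$e$. I would then transfer this local bound to a global uniform bound by using (i) the normality of $M$ in $G$, so that the conjugation action of $G$ on arguments preserves the set $A$ and scales orders of $w$-values uniformly, and (ii) the local finiteness of $G/M$, so that $A$ is covered by finitely many translates (relative to suitable open subgroups) whose cosets in $G/M$ are finite in number at each scale.

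The main obstacle is precisely this local-to-global transfer. The Baire step yields exponent bounds only on a single relatively open piece of $A$, while $A$ is not a group; promoting the bound to all of $A$ requires a careful interplay between the normal-abelian structure of $M$, the conjugation action of $G$, and the local finiteness of $G/M$, ensuring that finitely many $G$-translates of the Baire coset suffice to cover all tuples in $A$ (modulo $M$) and that the exponent can be uniformly absorbed across these translates. Once the uniform bound is secured, the abelianness of $M$ immediately gives $T'$, and hence $T$, finite exponent, completing the proof.
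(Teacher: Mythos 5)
Your reduction of the lemma to the claim that the $w$-values of $G$ lying in $M$ have uniformly bounded order is where the proof breaks down, and you flag the problem yourself: the Baire argument on $A=w^{-1}(M)\subseteq G^n$ only controls a single relatively open piece $A\cap U$, and no mechanism is given for propagating the bound to all of $A$. The set $A$ is not a subgroup; conjugation only produces the pieces $A\cap U^x$, which need not cover $A$; and local finiteness of $G/M$ does not make $M$ of finite index, so ``finitely many translates'' is not available --- compactness yields a finite subcover only from a genuine open cover, which you do not have. Translating the arguments by elements of $M$ does not help either: comparing $w(g_1m_1,\dots,g_nm_n)$ with $w(g_1,\dots,g_n)$ via Lemma~\ref{lbw} reintroduces exactly the subgroup generated by the $w$-values in $M$ whose exponent you are trying to bound, so that route is circular. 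Note also that your intended conclusion is strictly stronger than the lemma: with $C=Mw(G)$ you would get $G/C$ soluble, whereas the lemma claims only virtually soluble, and that weakening is not an accident.

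The paper's proof is designed precisely to avoid any global bound on $w$-values in $M$. After reducing to $w=\delta_k$ (legitimate by Lemma~\ref{delta}, since the conclusion does not mention $w$), it applies Baire's theorem not to orders of individual $w$-values but to the function $e(g_1,\dots,g_{2^k})=\exp\bigl(M\cap w(M\langle g_1,\dots,g_{2^k}\rangle)\bigr)$, which is finite by Theorem~\ref{dems} because $M\langle g_1,\dots,g_{2^k}\rangle$ is virtually abelian. This produces a single value $i$ on a coset $(a_1H)\times\cdots\times(a_{2^k}H)$ of an open subgroup. One then takes $T$ to be the closed normal subgroup generated by \emph{all} elements of $M$ of order dividing $i$ --- of exponent dividing $i$ because $M$ is abelian, with no control of any other torsion required --- and $C$ to be the preimage of $C_{G/T}(M/T)$. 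Modulo $T$ the controlled $w$-values centralize $M$ and hence lie in $C$, so in $G/C$ the open subgroup $H$ satisfies the law $w(a_1x_1,\dots,a_{2^k}x_{2^k})=1$ and is therefore soluble by \cite[Lemma~2.2]{mz}; this is exactly why the conclusion is only ``virtually'' soluble. To salvage your approach you would need to replace the unobtainable global bound by this kind of coset-level information.
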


\begin{proof} First consider the case where $w=\delta _k$.
Choose arbitrarily $g_1,\dots,g_{2^k}\in G$. The subgroup $M\langle g_1,\dots,g_{2^k}\rangle$ is virtually abelian and so, by Theorem~\ref{dems}, the verbal subgroup $w(M\langle g_1,\dots,g_{2^k}\rangle)$ is locally finite and has finite exponent. Denote by $e(g_1,\dots,g_{2^k})$ the exponent of $M\cap w(M\langle g_1,\dots,g_{2^k}\rangle)$. For each positive integer $i$ we set
$$
S_i=\{(g_1,\dots,g_{2^k})\in G\times\dots\times G \mid e(g_1,\dots,g_{2^k})=i\}.
$$
The sets $S_i$ are closed in $G\times\dots\times G$ and they cover the whole group $G\times\dots\times G$. By Baire's category theorem there exists $i$ such that $S_i$ contains an open subset of $G\times\dots\times G$. Therefore there exist elements $a_1,\dots,a_{2^k}\in G$, an open subgroup $H\leq G$, and a positive integer $i$ such that $e(a_1h_1,\dots,a_{2^k}h_{2^k})=i$ for any $h_1,\dots,h_{2^k}\in H$. Let $T$ be the subgroup generated by all elements of $M$ which have finite order dividing~$i$. Obviously $T$ is a closed normal subgroup of~$G$. Let $C$ be the full preimage of $C_{G/T}(M/T)$, which is also a closed normal subgroup of~$G$. Since $M$ is abelian, it follows that $T$ has finite exponent dividing~$i$. Consider the quotient $G/T$. Thus, we assume that $i=1$. In particular, we deduce that $[M,w(a_1h_1,\dots,a_{2^k}h_{2^k})]=1$ for any $h_1,\dots,h_{2^k}\in H$. In other words, $w(a_1h_1,\dots,a_{2^k}h_{2^k})\in C$ for any $h_1,\dots,h_{2^k}\in H$. Consider the quotient $G/C$. In the quotient we have $w(a_1h_1,\dots,a_{2^k}h_{2^k})=1$ for any $h_1,\dots,h_{2^k}\in H$. By \cite[Lemma 2.2]{mz} the image of $H$ in $G/C$ is soluble. The proof is complete in the special case of $w=\delta _k$. The general case follows, since every $\delta _n$-value is a $w$-value by Lemma~\ref{delta}, so that our group in the class $\mathscr{X}$ corresponding to $w$ is also contained in the same kind of class corresponding to~$\delta _n$.
\end{proof}

\begin{lemma}\label{sta1}
Let $G\in\mathscr{X}$ be a group having a closed normal abelian subgroup $M$ such that $G/M$ is either a locally finite pro-$p$ group or a Cartesian product of isomorphic finite simple groups. Then $w(G)$ is locally finite.
\end{lemma}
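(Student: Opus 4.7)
The plan is to combine Lemma~\ref{sta00} with Theorem~\ref{dems} to reduce to a central-extension situation, and then handle the two cases for $G/M$ separately. First, I would apply Lemma~\ref{sta00} to the abelian normal subgroup $M$ (noting that $G/M$ is locally finite in both cases), obtaining closed normal subgroups $T\leq C\leq G$ with $T$ of finite exponent, $[M,C]\leq T$, and $G/C$ virtually soluble. Since $T$ is abelian (as a closed subgroup of $M$) of finite exponent, it is locally finite; passing to $G/T$, I may assume $M\leq Z(C)$. Because $\mathscr{X}$ is closed under homomorphic images, $G/C$ is a virtually soluble group in $\mathscr{X}$, so Theorem~\ref{dems} yields that $w(G)C/C=w(G/C)$ is locally finite of finite exponent. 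Hence the problem reduces to showing that $w(G)\cap C$ is locally finite.

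The key structural observation is that $M\leq Z(C)$ implies every multilinear commutator value on elements of $C$ is unchanged when its arguments are multiplied by elements of $M$ (central factors cancel out of iterated commutators). Consequently the projection $C\to C/M$ induces a bijection between the $w$-values of $C$ and those of $C/M$, so $w(C)M/M=w(C/M)$. Since $C/M$ is locally finite in both cases (in case~A as a closed subgroup of the locally finite group $G/M$; in case~B because the virtual solubility of $G/C$ forces $G/C$ to be finite---non-abelian simple factors admit no nontrivial soluble normal subgroups---so $C/M$ is itself a Cartesian product of copies of the simple group $S$, of bounded exponent $\exp(S)$), the quotient $(w(G)\cap C)/(w(G)\cap M)$ embeds into $C/M$ and is locally finite. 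The task thus reduces to showing that the abelian closed subgroup $w(G)\cap M$ is periodic.

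For case~B, I would decompose $C$ via the preimages $C_i$ in $C$ of the simple factors of $C/M=\prod_i S$; since any homomorphism from the perfect non-abelian simple group $S$ to the abelian group $M$ is trivial, the standard identity gives that $c_i\mapsto[c_i,c_j]$ is a homomorphism $C_i\to M$ factoring through $S^{ab}=1$, so $[C_i,C_j]=1$ for $i\neq j$. Consequently $[C,C]$ is a central product of the finite perfect groups $[C_i,C_i]$, each of order bounded by $|S|\cdot|H_2(S)|$. Thus $[C,C]$ has bounded exponent and is locally finite by Zelmanov's theorem, and $w(C)\leq[C,C]$ is locally finite; a Baire-category argument along the lines of Lemma~\ref{l-coset} then handles the abelian part $w(G)\cap M$, completing case~B.

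For case~A, the Sylow $p$-subgroup $P$ of $G$ satisfies $PM=G$ and contains the normal $p$-primary component $M_p$; the hypothesis that $w(P)$ is periodic (hence locally finite by Zelmanov) directly controls $w(G)\cap M_p$. For the pro-$p'$ part $M_{p'}$, one exploits the coprime $P$-action decomposition $M_{p'}=C_{M_{p'}}(P)\times[M_{p'},P]$, together with the centrality $M\leq Z(C)$ and the finite order of all $w$-values, via a Baire-category argument in the spirit of Lemma~\ref{sta00} applied now to $M_{p'}$, to extract a finite-exponent coset inside $w(G)\cap M_{p'}$ and conclude periodicity. The principal obstacle is precisely this last step, since the Sylow hypothesis gives no direct information about primes $q\neq p$ (Sylow $q$-subgroups of $G$ being abelian subgroups of $M$, for which $w(Q)$ is trivially trivial). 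Once $w(G)\cap M$ is shown to be periodic, Zelmanov's theorem assembles the extension into local finiteness of $w(G)$.
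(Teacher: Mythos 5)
Your opening reduction (apply Lemma~\ref{sta00}, pass to $G/T$ so that $M\leq Z(C)$, note that $G/C$ is virtually soluble) matches the paper, but from there you head in a direction that leaves real gaps. The paper's route is to make $G$ itself virtually soluble and then invoke Theorem~\ref{dems} for the whole group, rather than trying to prove directly that $w(G)\cap M$ is periodic. Concretely, in the pro-$p$ case the key observation you are missing is that $C$ is \emph{pronilpotent}: since $M\leq Z(C)$ and $C/M$ is pro-$p$, the group $C$ is the direct product of the central pro-$p'$ part of $M$ with a $p$-Sylow subgroup $Q$ of $C$, whence $w(C)=w(Q)$ (as $w$ has weight at least $2$), and this is periodic by the definition of $\mathscr{X}$. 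Factoring out the locally finite subgroup $w(C)$ makes $C$ soluble, hence $G$ virtually soluble, and Theorem~\ref{dems} finishes. Your attempt instead tries to control $w(G)\cap M_p$ by $w(P)$ for a Sylow subgroup $P$ of $G$; this does not work, because $w(G)$ is generated by $w$-values on all of $G=PM$ and $M$ is not central in $G$ (only in $C$), so there is no inclusion of $w(G)\cap M_p$ into $w(P)$. For $M_{p'}$ you yourself flag the step as an unresolved ``principal obstacle''; the gesture at a Baire-category argument is not a proof.

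In the simple-product case your computation that $[C_i,C_j]=1$ for $i\neq j$ and that each $[C_i,C_i]$ is finite of bounded order is correct and amounts to a hands-on proof that $C'$ has finite exponent (the paper quotes Mann's theorem for exactly this conclusion, starting from the fact that $C/Z(C)$ has finite exponent). But you then again defer to an unspecified Baire-category argument for $w(G)\cap M$. The clean finish from where you stand is: $C'$ has finite exponent, hence is locally finite; factor it out, so that $C$ becomes abelian and $G$ (with $G/C$ finite) is virtually abelian; apply Theorem~\ref{dems}. In both cases the lesson is the same: do not try to analyse $w(G)\cap M$ directly --- reduce to a virtually soluble group and let Theorem~\ref{dems} do the work.
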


\begin{proof} In the case where $w$ has weight~1 the lemma is obvious, so we assume that $w$ is of weight at least~2. Let $T$ and $C$ be as in Lemma~\ref{sta00}. Passing to the quotient $G/T$ we can assume that $T=1$.

Suppose first that $G/M$ is a locally finite pro-$p$ group. Then $C$ is pronilpotent and $w(C)=w(P)$, where $P$ is the $p$-Sylow subgroup of~$C$. Hence, by the hypothesis, $w(C)$ is locally finite and therefore we can pass to the quotient $G/w(C)$. Thus, without loss of generality we assume that $w(C)=1$. It follows that $C$ is soluble. Taking into account that $G/C$ is virtually soluble, we deduce that so is~$G$. Now the lemma is immediate from Theorem~\ref{dems}.

We now assume that $G/M$ is isomorphic to a Cartesian product of isomorphic finite simple groups. Then $C/Z(C)$ has finite exponent and a result of Mann~\cite{mann} tells us that the derived group of $C$ has finite exponent (and so is locally finite). The quotient $G/C'$ is virtually soluble, and so the result again follows from Theorem~\ref{dems}.
\end{proof}

The result of Mann~\cite{mann} we just mentioned can be viewed as a generalization of Schur's theorem on groups with finite quotient by the centre. It is an open problem whether the derived subgroup of a profinite group must be locally finite if the quotient of the group by the centre is locally finite. But such a result is true in our special situation.

\begin{lemma}\label{sta3}
Let $G\in\mathscr{X}$ be a group such that $G/Z(G)$ is locally finite. Then $G'$ is locally finite.
\end{lemma}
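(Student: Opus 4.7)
The plan is to combine Baire's category theorem applied to the locally finite quotient $G/Z(G)$ with the theorem of Mann~\cite{mann} (already invoked in the proof of Lemma~\ref{sta1}), which states that a profinite group with central quotient of finite exponent has finite-exponent derived subgroup. By Zelmanov's theorem, used freely in this section, it suffices to show that $G'$ is periodic.

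First I would apply Baire's theorem to the closed cover $G/Z(G) = \bigcup_{n\geq 1} S_n$, where $S_n = \{\bar g : \bar g^n = 1\}$. Some $S_n$ has nonempty interior, yielding, after replacing by the normal core, an open normal subgroup $H \trianglelefteq G$ with $Z(G)\leq H$, an element $t\in G$, and an integer $n$ such that $(th)^n\in Z(G)$ for every $h\in H$; setting $h=1$ gives $t^n\in Z(G)$. The crux is then to promote this coset-level bound to a subgroup-level one: to find an open subgroup $V\leq H$ with $V/Z(V)$ of finite exponent. This is where the $\mathscr{X}$-hypothesis must intervene. One observes first that every commutator $[x,y]$ in $G$ has finite order, because $\langle x,y\rangle Z(G)/Z(G)$ is finite by local finiteness of $G/Z(G)$, so $\langle x,y\rangle$ is center-by-finite and by Schur's theorem $\langle x,y\rangle'$ is finite. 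Combining this with the finite order of all $w$-values (from $\mathscr{X}$) and the relations encoded in $(th)^n,\, t^n\in Z(G)$ via Hall--Petresco-type expansions, one should be able to isolate an open $V$ on which $V/Z(V)$ has finite exponent.

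Once such $V$ is located, Mann's theorem gives $V'$ of finite exponent, hence locally finite by Zelmanov. Since $V$ is open in $G$, the index $[G:V]$ is finite and $V'\trianglelefteq G$ (being characteristic in~$V$). The quotient $G/V'$ lies in $\mathscr{X}$ and is virtually abelian ($V/V'$ being an open abelian normal subgroup), hence virtually soluble. Theorem~\ref{dems} applied to $G/V'$ gives $w(G/V')$ locally finite of finite exponent; a finishing argument using the finite order of commutators in $G/V'$ (Schur again), the fact that $(G'\cap V)/V'$ sits as a subgroup of the abelian $V/V'$, and compactness should yield $G'/V'$ locally finite. Combining with $V'$ locally finite gives $G'$ locally finite.

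The hardest step is the promotion from coset-exponent bound to subgroup-exponent bound in the second paragraph: without the $\mathscr{X}$-hypothesis this is an open problem, as remarked immediately before the lemma statement, and the hypothesis must be used essentially at this point.
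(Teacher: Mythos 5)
The decisive step of your argument --- promoting the coset-level relation $(th)^n\in Z(G)$ obtained from Baire's theorem to an open subgroup $V$ with $V/Z(V)$ of finite exponent --- is exactly the hard part, and you do not carry it out: ``via Hall--Petresco-type expansions, one should be able to isolate an open $V$'' is a hope, not an argument. You give no indication of how the $\mathscr{X}$-hypothesis (finite order of $w$-values, periodicity of $w(P)$ for Sylow subgroups $P$) would interact with the single relation $(th)^n\in Z(G)$ to produce such a $V$, and, as you yourself observe, without an essential use of that hypothesis the statement is an open problem; so this is precisely the point at which a proof is required and none is given. (The observation that each individual commutator $[x,y]$ has finite order via Schur's theorem is correct but does not bridge the gap.) The final paragraph is also left vague, though it could plausibly be completed by mimicking the paper's own ending: factor out $w$ of a suitable quotient using Theorem~\ref{dems}, reduce to a soluble group, note that soluble locally finite profinite groups have finite exponent, and apply Mann's theorem.

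The paper avoids any single-variable Baire argument on $G/Z(G)$ altogether. Since $G/Z(G)$ is a compact torsion group, Wilson's structure theorem \cite[Theorem~1]{wil83} yields a finite series $1\leq Z(G)=G_0\leq G_1\leq\dots\leq G_s=G$ of closed normal subgroups whose factors are either locally finite pro-$p$ groups or Cartesian products of isomorphic finite simple groups. One then inducts on $s$: by induction $G_{s-1}'$ is locally finite, so one may assume $G_{s-1}$ abelian; Lemma~\ref{sta1} --- which is where the $\mathscr{X}$-hypothesis and the Baire category argument genuinely enter, through Lemma~\ref{sta00} applied to $2^k$-tuples and through the periodicity of $w(P)$ --- gives that $w(G)$ is locally finite; factoring it out makes $G$ soluble, whence $G/Z(G)$ has finite exponent and Mann's theorem concludes. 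To salvage your route you would in effect have to reprove the content of Lemmas~\ref{sta00} and~\ref{sta1}, which is what your missing promotion step amounts to.
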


\begin{proof}
In view of the result of Wilson \cite[Theorem~1]{wil83},
the group $G$ has a finite series of closed normal subgroups
$$
1\leq Z(G)=G_0\leq G_1\leq\dots\leq G_s=G
$$
each of whose factors $G_{i+1}/G_i$ is either a Cartesian product of isomorphic finite simple groups or a locally finite pro-$p$ group.
We
use induction on $s$, with obvious basis when $s=0$. By induction the derived group $G_{s-1}'$ is locally finite. We can pass to the quotient $G/G_{s-1}'$ and assume that $G_{s-1}$ is abelian. Now Lemma~\ref{sta1} shows that $w(G)$ is locally finite. We pass to the quotient $G/w(G)$ and assume that $G$ is soluble. Since soluble locally finite profinite  groups have finite exponent, we deduce that $G/Z(G)$ has finite exponent. In view of Mann's result \cite{mann} the lemma follows.
\end{proof}

\begin{lemma}\label{sta4}
Let $G\in\mathscr{X}$ be a group having a closed normal abelian subgroup $M$ such that $G/M$ is locally finite. Then $w(G)$ is locally finite.
\end{lemma}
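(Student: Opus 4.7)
The plan is to assemble Lemma~\ref{sta00}, Lemma~\ref{sta3}, and Theorem~\ref{dems}, reducing $G$ in two steps to a virtually soluble group on which Theorem~\ref{dems} can be applied.

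First, I would invoke Lemma~\ref{sta00} on $G$ to obtain closed normal subgroups $1 \leq T \leq C \leq G$ with $T$ of finite exponent, $[M, C] \leq T$, and $G/C$ virtually soluble; note also $M \leq C$, since $M/T$ is abelian and therefore centralizes itself in $G/T$. Because $T$ lies inside the abelian $M$, it is an abelian profinite group of finite exponent, hence locally finite. Factoring out $T$ (permitted because $\mathscr{X}$ is closed under homomorphic images, and local finiteness of $w(G/T)$ combined with local finiteness of $T$ implies local finiteness of $w(G)$), I may assume $T = 1$. Then $[M, C] = 1$ and $M \leq C$ together give $M \leq Z(C)$.

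Next, since $C/M$ embeds in the locally finite group $G/M$, it is locally finite, and hence so is the further quotient $C/Z(C)$. As $\mathscr{X}$ is closed under subgroups, $C \in \mathscr{X}$, so Lemma~\ref{sta3} applies and shows $C'$ is locally finite. Factoring out $C'$, I may assume $C$ is abelian. The group $G$ then has the closed normal abelian subgroup $C$ with virtually soluble quotient $G/C$, so $G$ itself is virtually soluble. Theorem~\ref{dems} now yields that $w(G)$ is locally finite.

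The only points that require care are routine bookkeeping: verifying that $M \leq C$ is built into the conclusion of Lemma~\ref{sta00}, and that the two successive quotientings by locally finite closed normal subgroups preserve both membership in $\mathscr{X}$ and the reducibility of the problem. The real work has already been carried out upstream, especially the Baire-category argument underlying Lemma~\ref{sta00}; the present lemma is essentially an assembly exercise that routes $G$ through Lemmas~\ref{sta00} and~\ref{sta3} into the hypothesis of Theorem~\ref{dems}.
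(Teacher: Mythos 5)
Your proof is correct and follows essentially the same route as the paper: reduce to $T=1$ via Lemma~\ref{sta00}, apply Lemma~\ref{sta3} to $C$ to make $C$ abelian, and finish with Theorem~\ref{dems}. The paper's version is just terser, leaving implicit the observations you spell out (that $M\leq C$ by the construction of $C$, hence $M\leq Z(C)$ once $T=1$, so $C/Z(C)$ is locally finite).
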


\begin{proof} Let $T$ and $C$ be as in Lemma~\ref{sta00}. Without loss of generality we can assume that $T=1$. By Lemma~\ref{sta3} the derived group of $C$ is locally finite. Passing to the quotient $G/C'$ we can assume that $C$ is abelian. Thus, $G$ is virtually soluble and so the result follows from Theorem~\ref{dems}.
\end{proof}

\begin{lemma}\label{sta5}
Let $G\in\mathscr{X}$ be a group having a closed normal soluble subgroup $M$ such that $G/M$ is locally finite. Then $w(G)$ is locally finite.
\end{lemma}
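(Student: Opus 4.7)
The plan is to argue by induction on the derived length $d$ of $M$. The case $d=1$ (with $M=1$ absorbed as a trivial abelian case) is Lemma~\ref{sta4}. For $d\geq 2$, the idea is to peel off the top abelian factor of $M$ using Lemma~\ref{sta4}, then feed $w(G)$ itself back into the induction to obtain local finiteness of $w(w(G))$, and finally recover local finiteness of $w(G)$ by reducing to a soluble group to which Theorem~\ref{dems} applies.

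For the inductive step I would first pass to $G/M'$: it lies in $\mathscr{X}$ (closed under homomorphic images), has the closed normal abelian subgroup $M/M'$, and $(G/M')/(M/M')\cong G/M$ is locally finite. Lemma~\ref{sta4} therefore gives that $w(G/M')=w(G)M'/M'$ is locally finite, so that $w(G)/(w(G)\cap M')$ is locally finite. Now view $w(G)$ itself, which lies in $\mathscr{X}$ (closed under subgroups), as the ambient group: its closed normal subgroup $w(G)\cap M'\leq M'$ is soluble of derived length at most $d-1$, with locally finite quotient by the previous sentence. The inductive hypothesis applied to $w(G)$ then yields that $w(w(G))$ is locally finite.

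To finish I would set $\bar G=G/w(w(G))$, which is a profinite group in $\mathscr{X}$ (note $w(w(G))$ is closed and characteristic in $G$, being a verbal subgroup of the characteristic subgroup $w(G)$). The verbal subgroup $w(\bar G)=w(G)/w(w(G))$ satisfies the law $w=1$, because by definition of the verbal subgroup every $w$-value on elements of $w(G)$ lies in $w(w(G))$; hence $w(\bar G)$ is soluble. Likewise $\bar G/w(\bar G)\cong G/w(G)$ satisfies $w=1$ and so is soluble. Therefore $\bar G$ itself is soluble, and Theorem~\ref{dems} applied to $\bar G\in\mathscr{X}$ gives that $w(\bar G)=w(G)/w(w(G))$ is locally finite. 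Combined with the local finiteness of $w(w(G))$, this makes $w(G)$ an extension of a locally finite profinite group by a locally finite profinite group, hence locally finite.

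The main subtle point is the final step: local finiteness of $w(w(G))$ alone does not obviously yield local finiteness of $w(G)$. The trick is that factoring $w(w(G))$ out renders the \emph{whole} ambient quotient $\bar G$ soluble, since both $w(\bar G)$ and $\bar G/w(\bar G)$ satisfy the law $w=1$, which is exactly the hypothesis needed to apply Theorem~\ref{dems} and close the induction.
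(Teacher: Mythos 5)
Your proof is correct and follows essentially the same route as the paper: induction on the derived length of $M$, reduction to Lemma~\ref{sta4} to get $w(w(G))$ locally finite, and the observation that killing $w(w(G))$ makes the whole group soluble so that Theorem~\ref{dems} finishes. The only (immaterial) difference is that you peel the abelian factor $M/\overline{M'}$ off the top of the derived series, whereas the paper quotients by the last nontrivial (abelian) term at the bottom and applies the inductive hypothesis to $G/N$ first and Lemma~\ref{sta4} to $w(G)$ second.
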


\begin{proof}
We use induction on the derived length of~$M$. By Lemma~\ref{sta4} the lemma holds if $M$ is abelian. So we assume that $M$ is non-abelian and let $N$ be the last nontrivial term of the derived series of~$M$. By induction the image of $w(G)$ in $G/N$ is locally finite. Again we use Lemma~\ref{sta4} and conclude that $w(w(G))$ is locally finite. We can now pass to the quotient $G/w(w(G))$ and assume that $w(w(G))=1$. In this case $G$ is soluble and the result follows from Theorem~\ref{dems}.
\end{proof}

Recall that for a prosoluble group $G$ we denote by $ph(G)$ the length of a shortest series of (closed) characteristic subgroups all of whose factors are pronilpotent, and if $G$ has no finite series of this kind, then we write $ph(G)=\infty$.

\begin{lemma}\label{sta6}
Let $G\in\mathscr{X}$ be a prosoluble group such that $ph(G)$ is finite. Then $w(G)$ is locally finite.
\end{lemma}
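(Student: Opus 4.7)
The plan is induction on $ph(G)$. For the base case $ph(G)=1$ I would decompose the pronilpotent group $G=\prod_p G_p$ as the Cartesian product of its Sylow pro-$p$ subgroups. Since elements from distinct Sylow subgroups commute and $w$ is a multilinear commutator, any $w$-value $w(g_1,\dots,g_n)$ with $g_i=(g_{i,p})_p$ decomposes coordinatewise as $(w(g_{1,p},\dots,g_{n,p}))_p$. If $w(G_p)\neq 1$ for infinitely many $p$, one can choose in each such $G_p$ test elements realising a nontrivial $w$-value and combine them into a single $w$-value of $G$ with infinitely many nontrivial coordinates, forcing infinite order and contradicting the $\mathscr{X}$-hypothesis. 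Hence $w(G)=\prod_{p\in\pi_0}w(G_p)$ is a \emph{finite} Cartesian product whose factors are periodic by hypothesis and hence locally finite by Zelmanov's theorem, so $w(G)$ itself is locally finite.

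For $ph(G)=k\geq 2$, fix a series of closed characteristic subgroups $1\leq G_1\leq\dots\leq G_k=G$ with pronilpotent factors. The base case applied to $G_1$ (which is pronilpotent and lies in $\mathscr{X}$ by subgroup closure) gives $w(G_1)$ locally finite. Passing to $\bar G=G/w(G_1)$ and writing bars for images, $\bar G_1$ is pronilpotent with $w(\bar G_1)=1$ and is therefore soluble, since the law $w=1$ defines a soluble variety. The quotient $\bar G/\bar G_1\cong G/G_1$ has $ph$ at most $k-1$, so by induction $w(\bar G/\bar G_1)$ is locally finite. Let $\bar N$ be its preimage in $\bar G$: then $\bar N$ has a closed normal soluble subgroup $\bar G_1$ with locally finite quotient $\bar N/\bar G_1$, so Lemma~\ref{sta5} yields $w(\bar N)$ locally finite. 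Since both $\bar N/w(\bar N)$ and $\bar G/\bar N\cong(\bar G/\bar G_1)/w(\bar G/\bar G_1)$ satisfy the law $w=1$ and so are soluble, the group $\bar G/w(\bar N)$ is soluble; Theorem~\ref{dems} then provides that $w(\bar G/w(\bar N))$ is locally finite. Combining these facts, $w(\bar G)$ is an extension of the locally finite normal subgroup $w(\bar G)\cap w(\bar N)$ by a subgroup of the locally finite group $w(\bar G/w(\bar N))$, hence is periodic and locally finite by Zelmanov. Since $w(\bar G)=w(G)/w(G_1)$ and both $w(G_1)$ and $w(G)/w(G_1)$ are locally finite, $w(G)$ itself is periodic and therefore locally finite.

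The main obstacle I expect is the base case: turning the hypothesis ``every $w$-value has finite order'' into ``$w(G_p)=1$ for almost all $p$'' depends on the clean coordinatewise behaviour of $w$ in the pronilpotent Cartesian product and the diagonal construction of a $w$-value with infinite support. The inductive step is then largely structural bookkeeping, combining Lemma~\ref{sta5}, Theorem~\ref{dems} (the virtually soluble case), and repeated use of Zelmanov's principle that periodic profinite groups are locally finite to propagate local finiteness through the extensions that arise.
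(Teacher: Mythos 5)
Your proof is correct and follows essentially the same route as the paper: the identical Cartesian-product argument for the pronilpotent base case, then induction on $ph(G)$ using Lemma~\ref{sta5}, Theorem~\ref{dems}, and Zelmanov's theorem. The only difference is bookkeeping in the inductive step (the paper applies Lemma~\ref{sta5} directly to $w(G)$, concludes $w(w(G))$ is locally finite, factors it out to make $G$ soluble, and finishes with Theorem~\ref{dems}; you apply it to the preimage of $w(\bar G/\bar G_1)$ and assemble the pieces with an extension argument), which is not a substantive divergence.
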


\begin{proof} Suppose first that $G$ is pronilpotent. Then $G=\prod P_i$ is the Cartesian product of its $p$-Sylow subgroups and $w(G)$ is the Cartesian product of the subgroups $w(P_i)$. Since each $w$-value in $G$ has finite order, it follows that $w(P_i)=1$ for all but finitely many Sylow subgroups~$P_i$. Therefore $w(G)$ is the Cartesian product of finitely many locally finite subgroups $w(P_i)$. It follows that $w(G)$ is locally finite.

Hence we can assume that $ph(G)\geq2$ and use induction on $ph(G)$. Let
$$
1=G_0<G_1<\dots<G_{h}=G
$$
be a series of closed normal subgroups of length $h=ph(G)$ with pronilpotent factors. By induction the image of $w(G)$ in $G/G_1$ is locally finite. Moreover, since $G_1$ is pronilpotent, $w(G_1)$ is locally finite. We pass to the quotient $G/w(G_1)$ and assume that $w(G_1)=1$. It follows that $G_1$ is soluble and $w(G)$ is soluble-by-(locally finite). Lemma~\ref{sta5} now tells us that $w(w(G))$ is locally finite. We pass to the quotient $G/w(w(G))$ and assume that $w(w(G))=1$. In this case $G$ is soluble and the result follows from Theorem~\ref{dems}.
\end{proof}

\begin{lemma}\label{sta7}
Let $G$ be a prosoluble group such that $w(G)\neq1$. Then there exist a finite set of primes $\pi$ and a pro-$\pi$ subgroup $H\leq G$ such that $w(H)\neq1$.
\end{lemma}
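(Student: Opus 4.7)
The plan is to extract a specific nontrivial $w$-value, descend to a finite continuous quotient where only finitely many primes appear, and then pull back a Hall subgroup along that quotient map to obtain the required pro-$\pi$ subgroup.

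First, I would observe that $w(G)\neq 1$ forces the existence of a $w$-value $v=w(g_1,\dots,g_n)\in G$ with $v\neq 1$; indeed, if every $w$-value were trivial, the closed subgroup they generate would be trivial as well. Since $G$ is profinite and $v\neq 1$, there is an open normal subgroup $K$ of $G$ with $v\notin K$, and the finite soluble quotient $\bar G=G/K$ then contains the nontrivial $w$-value $\bar v$. Setting $\pi=\pi(\bar G)$, a finite set of primes, reduces the problem to producing a closed pro-$\pi$ subgroup of $G$ whose image in $\bar G$ is all of $\bar G$.

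For this step, I would invoke the fact that every prosoluble group possesses Hall $\pi$-subgroups for any set of primes $\pi$ (the profinite analogue of P.\ Hall's theorem, obtained by taking inverse limits of Hall subgroups in the finite soluble continuous quotients). Let $H$ be such a Hall $\pi$-subgroup of $G$; it is closed and pro-$\pi$ by construction. Since $|\bar G|$ is a $\pi$-number, the image $HK/K$ is a Hall $\pi$-subgroup of $\bar G$ of full order, and therefore equals $\bar G$; equivalently, $HK=G$. Choosing lifts $h_1,\dots,h_n\in H$ with $h_iK=g_iK$, the element $w(h_1,\dots,h_n)$ lies in $w(H)$ and has image $\bar v\neq 1$ in $\bar G$, so $w(H)\neq 1$, as required.

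I do not anticipate a genuine obstacle in this argument: the only non-elementary ingredient is the existence of Hall $\pi$-subgroups in prosoluble groups, which is a standard result, and everything else follows from the correspondence between open normal subgroups of $G$ and its finite continuous quotients together with the functoriality of $w$ under surjective homomorphisms.
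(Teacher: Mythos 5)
Your proposal is correct and follows essentially the same route as the paper: pass to a finite continuous quotient $G/N$ with $w(G/N)\neq 1$, set $\pi=\pi(G/N)$, and take a Hall $\pi$-subgroup $H$ of the prosoluble group $G$, so that $G=NH$ forces $w(H)\neq 1$. The extra detail you supply about lifting the $g_i$ into $H$ is exactly the justification the paper leaves implicit.
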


\begin{proof} Since $w(G)\neq1$, there exists a normal open subgroup $N$ in $G$ such that $w(G/N)\neq1$. Set $\pi=\pi(G/N)$ and choose a $\pi$-Hall subgroup $H$ in~$G$. Then $G=NH$ and therefore $w(H)\neq1$.
\end{proof}

Given two words $u=u(x_1,\dots,x_m)$ and $v=v(x_1,\dots,x_n)$, we denote by $u\circ v$ the word $u(v(x_{11},\dots,x_{1n}),\dots,v(x_{m1},\dots,x_{mn}))$. It is clear that if $u$ and $v$ are multilinear commutators, then so is $u\circ v$. Recall that $\mathscr{Y}$ is the class of all profinite groups $G$ in which all $w$-values have finite order and $W(P)$ is periodic for any $p$-Sylow subgroup of $G$ (for any prime $p\in \pi (G)$).

\begin{lemma}\label{sta8}
Let $G\in\mathscr{Y}$ be a non-soluble prosoluble group. Then $G$ contains a finite subgroup $A$ such that $w(A)\neq1$.
\end{lemma}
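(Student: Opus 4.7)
My plan is to introduce the iterated word $w^{(2)}:=w\circ w$ and exploit the fact that a $w^{(2)}$-value is by construction a value of $w$ at $w$-values, so it automatically lies inside the subgroup $w(H)$ where local finiteness will be available. To get started, note that $w^{(2)}$ is itself a multilinear commutator word, of weight $n^2$, so Lemma~\ref{delta} applied to $w^{(2)}$ says that every $\delta_{n^2}$-value is a $w^{(2)}$-value; since $G$ is non-soluble, $\delta_{n^2}(G)\neq 1$, and hence $w^{(2)}(G)\neq 1$.

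Next, mimicking the proof of Lemma~\ref{sta7} with $w$ replaced by $w^{(2)}$, I would pick an open normal subgroup $N$ of $G$ with $w^{(2)}(G/N)\neq 1$, set $\pi=\pi(G/N)$ (finite, because $G/N$ is finite), and let $H$ be a $\pi$-Hall subgroup of the prosoluble group $G$. Then $G=NH$, so $G/N\cong H/(H\cap N)$, which forces $w^{(2)}(H)\neq 1$; meanwhile $H$ is closed and pro-$\pi$, inherits the class $\mathscr{Y}$ from $G$, is prosoluble, and has $\pi(H)\subseteq\pi$ finite. Corollary~\ref{prosolu} then gives $ph(H)<\infty$, and Lemma~\ref{sta6} yields that the verbal subgroup $w(H)$ (for the original word $w$) is locally finite.

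Finally, I would pick $h_1,\dots,h_{n^2}\in H$ with $w^{(2)}(h_1,\dots,h_{n^2})\neq 1$ and set $v_i=w(h_{(i-1)n+1},\dots,h_{in})$ for $i=1,\dots,n$. Each $v_i$ is then a $w$-value on $H$, so $v_i\in H_w\subseteq w(H)$, and the identity $w(v_1,\dots,v_n)=w^{(2)}(h_1,\dots,h_{n^2})\neq 1$ shows that $w$ takes a nontrivial value on the tuple $(v_1,\dots,v_n)$. Because $w(H)$ is locally finite, the finitely generated subgroup $A:=\langle v_1,\dots,v_n\rangle$ is in fact a \emph{finite} subgroup of $w(H)\leq G$, and $w(A)$ contains the nontrivial element $w(v_1,\dots,v_n)$, so $A$ is the required finite subgroup.

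The main conceptual step is the passage from $w$ to $w^{(2)}$. If one worked with $w$ directly, a nontrivial $w$-value of the form $w(g_1,\dots,g_n)$ in $H$ gives no control: the arguments $g_i$ may have infinite order and need not generate a finite subgroup of $H$, so there is no obvious way to extract the desired finite $A\leq G$. Replacing $w$ by $w^{(2)}$ forces the arguments of the outer $w$ to be $w$-values themselves, which land inside the locally finite subgroup $w(H)$, and the finite subgroup $A$ appears essentially for free.
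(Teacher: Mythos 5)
Your proposal is correct and follows essentially the same route as the paper's own proof: both pass to $v=w\circ w$, use Lemma~\ref{sta7} (you re-run its argument rather than citing it) to obtain a pro-$\pi$ Hall subgroup $H$ with $v(H)\neq1$, then apply Corollary~\ref{prosolu} and Lemma~\ref{sta6} to get $w(H)$ locally finite, and finally extract the finite subgroup $A$ from a nontrivial $w$-value whose arguments are $w$-values lying in $w(H)$. The only differences are expository: you make explicit why $v(G)\neq1$ for non-soluble $G$ (via Lemma~\ref{delta}) and you name the generators $v_1,\dots,v_n$ of $A$ instead of appealing to $w(w(H))\neq1$ directly.
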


\begin{proof} Set $v=w\circ w$. Then $v$ is a multilinear commutator word with the properties that every $v$-value in $G$ has finite order and $v(P)$ is locally finite for every $p$-Sylow subgroup $P\leq G$. According to Lemma~\ref{sta7} there exist a finite set of primes $\pi$ and a pro-$\pi$ subgroup $H$ in $G$ such that $v(H)\neq1$. By Theorem~\ref{prosolu}, then  $ph(H)$ is finite, since $G\in \mathscr{Y}$. Now Lemma~\ref{sta6} tells us that $w(H)$ is locally finite. It is clear that $v(H)\leq w(w(H))$ and therefore $w(w(H))\neq1$. Thus, we can choose a finite subgroup $A\leq w(H)$ such that $w(A)\neq1$.
\end{proof}

Let $G$ be a finite group, on which a finite group $A$ of coprime order acts by automorphisms. It is well-known that if $N$ is a normal $A$-invariant subgroup of $G$, then $C_{G/N}(A)=C_G(A)N/N$ (see for example \cite[Theorem~6.2.2]{go}). (Henceforth we use the centralizer notation for fixed-point subgroups.) A~profinite version of this result can be stated as follows.

\begin{lemma}\label{l-coprime}
Let $G$ be a profinite group on which a finite group $A$ acts by continuous automorphisms. Suppose that $\pi(G)\cap\pi(A)=\varnothing$. If $N$ is a closed normal $A$-invariant subgroup of $G$, then $C_{G/N}(A)=C_G(A)N/N$.
\end{lemma}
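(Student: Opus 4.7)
The plan is to reduce to the finite coprime-action statement by passing to quotients, and then reassemble via a compactness argument in the inverse limit. The inclusion $C_G(A)N/N \subseteq C_{G/N}(A)$ is immediate from continuity of the action, so the real work is in the reverse inclusion. Fix $g \in G$ whose image $gN$ lies in $C_{G/N}(A)$; we must produce $c \in C_G(A)$ with $c \in gN$, i.e., $gN \cap C_G(A) \neq \varnothing$.

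First I would observe that the collection $\mathscr{U}$ of open normal $A$-invariant subgroups of $G$ is a base of neighborhoods of $1$: given any open normal $W \leq G$, the intersection $\bigcap_{a \in A} a(W)$ is a finite intersection (since $A$ is finite) of open normal subgroups, hence open, normal, and $A$-invariant; and it is contained in $W$. Consequently, $G = \varprojlim_{U \in \mathscr{U}} G/U$, and for each $U \in \mathscr{U}$ the action of $A$ on the finite group $G/U$ satisfies $\pi(G/U) \cap \pi(A) \subseteq \pi(G) \cap \pi(A) = \varnothing$. For each such $U$, set $\widehat{N}_U = NU/U$, which is $A$-invariant normal in $G/U$. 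The image of $gN$ in $(G/U)/\widehat{N}_U = G/NU$ is fixed by $A$, so by the finite coprime case applied inside $G/U$ (see, e.g., Gorenstein \cite[Theorem~6.2.2]{go}), one gets $g \cdot NU \in C_{G/U}(A)\widehat{N}_U/\widehat{N}_U$; equivalently, there exists $c \in G$ with $cU \in C_{G/U}(A)$ and $c \in gNU$.

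Next I would assemble these local solutions via compactness. For each $U \in \mathscr{U}$ define
\[
E_U = \pi_U^{-1}\bigl(C_{G/U}(A)\bigr) \cap gN \cdot U,
\]
where $\pi_U : G \to G/U$ is the quotient. Each $E_U$ is a closed subset of the compact group $G$, and it is nonempty by the previous paragraph. The family $\{E_U\}$ is downward directed: if $V \subseteq U$ with $U,V \in \mathscr{U}$, then $\pi_V^{-1}(C_{G/V}(A)) \subseteq \pi_U^{-1}(C_{G/U}(A))$ and $gN \cdot V \subseteq gN \cdot U$, so $E_V \subseteq E_U$. By compactness of $G$ the intersection $\bigcap_{U \in \mathscr{U}} E_U$ is therefore nonempty.

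Finally, I would identify this intersection with $gN \cap C_G(A)$. Since $gN$ is closed and $\mathscr{U}$ is a base at $1$, we have $\bigcap_{U} gN \cdot U = gN$. On the other hand, $\bigcap_{U} \pi_U^{-1}(C_{G/U}(A)) = C_G(A)$: one inclusion is clear, and conversely, if $[c,a] \in U$ for every $U \in \mathscr{U}$ and every $a \in A$, then $[c,a] \in \bigcap \mathscr{U} = 1$, so $c \in C_G(A)$. Combining these gives $\bigcap E_U = gN \cap C_G(A) \neq \varnothing$, completing the proof. The only real subtlety is ensuring the local fixes in the finite quotients are compatible enough to be glued — and this is precisely what the directedness $E_V \subseteq E_U$ together with the identification $C_G(A) = \varprojlim C_{G/U}(A)$ buys, bypassing any need to check that $C_G(A) \to C_{G/U}(A)$ is surjective.
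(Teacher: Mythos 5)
Your argument is correct. The paper does not actually prove Lemma~\ref{l-coprime} --- it refers the reader to \cite{io98} (and to \cite{he} for $|A|$ prime) --- and your proof is the standard one for this profinite version: reduce to the finite coprime-action lemma over the directed family of open normal $A$-invariant subgroups, then glue the local solutions by compactness, using that the closed nonempty sets $E_U$ have the finite intersection property and that $\bigcap_U E_U = gN\cap C_G(A)$. The only point worth a footnote is that the finite statement \cite[Theorem~6.2.2]{go} formally assumes $A$ or $G/U$ soluble; this is automatic here because coprimality of $|A|$ and $|G/U|$ forces one of them to have odd order, hence to be soluble by Feit--Thompson, which is the same convention the paper itself adopts when quoting the finite result.
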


A proof of this fact can be found in \cite{io98} (see also Lemma 2 in \cite{he} for the case where $|A|$ is a prime).

In his seminal work \cite{th} Thompson proved that if $G$ is a finite soluble group on which a finite soluble group $A$ of coprime order acts by automorphisms, then the Fitting height $h(G)$ is bounded in terms of $h(C_G(A))$ and the number of prime divisors of $|A|$ counting multiplicities. Further results in this direction were devoted to improving the corresponding bounds, with best possible one obtained by Turull, see his survey \cite{turull}. We need a profinite version of Thompson's theorem for the profinite analogue, which can be deduced by standard
arguments in the spirit of Lemma~\ref{l-wil2}. The fact that the hypotheses are inherited by quotients by closed normal subgroups follows from Lemma~\ref{l-coprime}.
 Therefore we omit the proof.

\begin{proposition}\label{thompson}
Let $G$ be a prosoluble group on which a finite soluble group $A$ acts by continuous automorphisms and suppose that $\pi(G)\cap \pi(A)=\varnothing$. If $ph(C_G(A))$ is finite, then $ph(G)$ is also finite.
\end{proposition}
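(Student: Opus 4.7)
The plan is a straightforward inverse-limit argument: reduce to finite quotients, apply Thompson's classical theorem with a uniform bound, and reassemble using Lemma~\ref{l-wil2}. Let $h=ph(C_G(A))$, which is finite by hypothesis, and let $k$ denote the number of prime divisors of $|A|$ counted with multiplicities; the target is a bound for $ph(G)$ in terms of $h$ and~$k$.

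First I would observe that the $A$-invariant open normal subgroups of $G$ form a base of neighbourhoods of the identity: for any open normal $N\leq G$, the finite intersection $\bigcap_{a\in A}N^a$ is still open and normal, and is $A$-invariant, since $A$ is finite. Hence $G=\varprojlim G/N$ as $N$ ranges over this base, and for each such $N$ the quotient $\bar G=G/N$ is a finite soluble group on which $A$ acts. Because $\pi(\bar G)\subseteq\pi(G)$ and $\pi(G)\cap\pi(A)=\varnothing$, this action is coprime, so Lemma~\ref{l-coprime} applies and gives $C_{\bar G}(A)=C_G(A)N/N$. The right-hand side is a continuous homomorphic image of $C_G(A)$; since images of pronilpotent groups are pronilpotent, the induced factor series shows $h(C_{\bar G}(A))\leq ph(C_G(A))=h$. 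Thompson's theorem on coprime soluble actions, applied to the finite soluble $\bar G$, then yields $h(\bar G)\leq f(h,k)$ for some function $f$ that does not depend on $N$.

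Set $m=f(h,k)$ and take $\mathfrak{X}_1=\cdots=\mathfrak{X}_m$ to be the class of finite nilpotent groups, which is closed under normal subgroups and subdirect products. The class $\mathfrak{X}$ of Lemma~\ref{l-wil2} is then the class of finite soluble groups of Fitting height at most~$m$, and the previous step shows that $G$ is a pro-$\mathfrak{X}$ group. Applying Lemma~\ref{l-wil2} produces a series $1=G_0\leq G_1\leq\cdots\leq G_m=G$ of closed characteristic subgroups with each factor $G_i/G_{i-1}$ pronilpotent, giving $ph(G)\leq m<\infty$. The only nontrivial point is the coprime-lifting identity $C_{G/N}(A)=C_G(A)N/N$ furnished by Lemma~\ref{l-coprime}; without it one could not guarantee that the centralizer hypothesis descends to every finite quotient with a uniform Fitting-height bound, and Thompson's theorem would have no way to be brought to bear.
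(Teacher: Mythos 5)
Your proof is correct and is exactly the argument the paper has in mind: the authors omit the proof of Proposition~\ref{thompson} but state that it follows by ``standard arguments in the spirit of Lemma~\ref{l-wil2}'' with the coprime-lifting Lemma~\ref{l-coprime} ensuring the hypotheses descend to finite quotients, which is precisely your reduction to Thompson's finite theorem with the uniform bound $f(h,k)$ followed by reassembly via Lemma~\ref{l-wil2}. No gaps; the passage to $A$-invariant open normal subgroups and the observation that the centralizer's image has Fitting height at most $h$ are handled correctly.
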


The following proposition plays a key role in the proof of the main theorem.

\begin{proposition}\label{pi-prosol}
Let $G\in\mathscr{Y}$ be a prosoluble group. Then $ph(G)$ is finite.
\end{proposition}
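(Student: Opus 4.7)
My plan is to split according to whether $\pi(G)$ is finite or infinite, handling the first case directly via Corollary~\ref{prosolu} and the second by a reduction using the profinite Thompson theorem.

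If $\pi(G)$ is finite, I would invoke Corollary~\ref{prosolu} immediately: by Zelmanov's theorem the periodic profinite subgroup $W(P)$ is locally finite for every $p$-Sylow $P$ with $p\in\pi(G)$, and so the hypotheses of that corollary are met, giving $ph(G)<\infty$ at once.

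In the general case, I would argue by contradiction. Assuming $ph(G)=\infty$, my plan is to use Proposition~\ref{thompson} to reduce to the finite-$\pi$ case. Concretely, I would pick a prime $p\in\pi(G)$, choose a nontrivial finite $p$-subgroup $A$ of $G$, and consider the closed normal pro-$p'$ subgroup $N=O_{p'}(G)$. Since $A$ acts on $N$ by coprime conjugation and $N$ inherits the $\mathscr{Y}$-hypothesis from $G$, Proposition~\ref{thompson} reduces the task of bounding $ph(N)$ to one of bounding $ph(C_N(A))$. Because $G/N$ itself lies in $\mathscr{Y}$ and is prosoluble with $O_{p'}(G/N)=1$ (so its pronilpotent radical is forced to be pro-$p$), the same structural reduction iterated on the quotient will control $ph(G/N)$; combined with the inequality $ph(G)\le ph(N)+ph(G/N)$, which holds because $N$ is characteristic in $G$ and the iterated pro-Fitting subgroups of $N$ stay characteristic in $G$, these two bounds together would give $ph(G)<\infty$ and produce the contradiction.

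The main obstacle will be to show $ph(C_N(A))<\infty$, since $\pi(C_N(A))\subseteq\pi(N)$ can a priori remain infinite. To resolve this I would try to choose $A$ so that $\pi(C_N(A))$ is strictly smaller than $\pi(N)$, using the Baire category construction of Lemma~\ref{l-coset} at the prime $p$ together with the pro-$p$-soluble structure series furnished by Theorem~\ref{pro-p-solu}. With $C_N(A)$ staying prosoluble and still in $\mathscr{Y}$, an induction on the resulting invariant should eventually land in the finite-$\pi$ case already governed by Corollary~\ref{prosolu}, closing the argument.
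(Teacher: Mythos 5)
Your finite-$\pi$ case is fine (it is exactly Corollary~\ref{prosolu}), but the general case has a genuine gap, and it is precisely where the real difficulty of the proposition lies. Your plan is to shrink the set of primes one step at a time: choose a finite $p$-group $A$, pass to $C_N(A)$ with $\pi(C_N(A))\subsetneq\pi(N)$, and ``induct on the resulting invariant'' until $\pi$ becomes finite. When $\pi(G)$ is infinite this induction cannot terminate: a strictly decreasing chain of infinite sets of primes need never reach a finite set after finitely many steps, so there is no well-founded invariant to induct on. Moreover, the claim that $A$ can be chosen with $\pi(C_N(A))$ strictly smaller than $\pi(N)$ is unsupported --- neither Lemma~\ref{l-coset} (a Baire category statement about cosets of finite exponent) nor Theorem~\ref{pro-p-solu} says anything about centralizers of a finite group acting coprimely, and in general $C_N(A)$ can involve every prime of $N$. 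There is also a smaller problem at the very first step: a prosoluble group need not contain any nontrivial \emph{finite} $p$-subgroup (e.g.\ $\mathbb{Z}_p$ is torsion-free), so ``choose a nontrivial finite $p$-subgroup $A$ of $G$'' may already be impossible; the paper has to work to produce finite subgroups, via Lemma~\ref{sta8}.

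The idea you are missing is that the contradiction in the paper does not come from ever bounding $ph$ of anything infinite-$\pi$; it comes from the periodicity of $w$-values. Assuming $ph(G)=\infty$, one recursively builds finite subgroups $A_1,A_2,\dots$ (modulo an increasing chain of closed normal subgroups $R_i$) that pairwise commute, have pairwise coprime orders, and each satisfy $w(A_i/R_i)\neq 1$. Each step uses Theorem~\ref{pro-p-solu} and Corollary~\ref{prosolu} to locate a pro-$\pi'$ section with infinite $ph$, Proposition~\ref{thompson} to see that the fixed points of the previously constructed finite $\pi$-group still have infinite $ph$, and Lemma~\ref{sta8} to extract the next finite witness $A_n$ inside those fixed points. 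The closure of the direct product of the $\overline{A}_i$ is then a Cartesian product of finite groups of coprime orders each carrying a nontrivial $w$-value, which yields a $w$-value of infinite order --- contradicting $G\in\mathscr{Y}$. Without some mechanism of this kind for converting ``$ph(G)=\infty$'' into a $w$-value of infinite order, the infinite-$\pi$ case remains open in your argument.
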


\begin{proof} Suppose that $ph(G)$ is infinite; then we will arrive at a contradiction. For that we inductively construct
an increasing chain of normal closed subgroups $1=R_1 \le R_2 \le \cdots $,
a decreasing chain of normal open subgroups
$G = H_1 \ge H_2 \ge \cdots $, and subgroups $A_i\geq R_i$, $i=1,2,\dots $,
such that
\begin{enumerate}

\item there exist mutually disjoint sets of primes $\pi_i$ with the property that
 $A_i/R_i$
 is a finite $\pi_i$-group and $w(A_i/R_i)\ne 1$;

\item $[A_i,A_j]\le R_i$ whenever $j<i$;

\item $ph(G/R_i)=\infty$ for every $i$;

\item $
A_i\cap H_{i+1}=R_i$ for every~$i$.

\end{enumerate}

Since $ph(G)$ is infinite, Lemma~\ref{sta8} implies that $G$ contains a finite subgroup $A_1$ such that $w(A_1)\neq1$. Let $\pi_1=\pi(A_1)$ and set $R_1=1$. Next, choose a normal open subgroup $H_2$ in $G$ such that $A_1\cap H_2=1$. This completes the first step of our construction; clearly, then properties 1--4 hold for $i=1$.

We perform the second step for clarity, before the formal inductive construction.
Since the set $\pi_1$ is finite, it follows from Theorem~\ref{pro-p-solu}
that $H_2$ has a finite series
$$
H_2=N_0\geq N_1\geq \dots \geq N_l=1
$$
of closed characteristic subgroups $N_{i}$ in which each quotient $ N_{i}/ N_{i+1}$ is either a pro-$\pi_1$ or a pro-$\pi_1'$ group. By Corollary~\ref{prosolu}, $ph(N_{i}/ N_{i+1})$ is finite whenever the quotient $N_{i}/ N_{i+1}$ is a pro-$\pi_1$ group. Since $ph(G)=\infty$, we have $ph(H_2)=\infty$. Therefore there exists a pro-$\pi_1'$ quotient $N_{k}/N_{k+1}$ such that $ph(N_{k}/N_{k+1})=\infty$. Set $R_2=N_{k+1}$. We now pass to the quotient $G/R_2$.

The finite $\pi_1$-group $A_1$ acts by conjugation on the pro-$\pi_1'$ group
$N_k/R_2$. By Proposition~\ref{thompson}, $ph(C_{N_k/R_2}(A_1))=\infty$. Thus, by Lemma~\ref{sta8} there exists a subgroup $ A_2$ containing $R_2$ such that $A_2/R_2$ is a finite subgroup of $C_{N_k/R_2}(A_1)$ with the property that $w(A_2/R_2)\neq1$. Set $\pi_2=\pi(A_2/R_2)$. We note that $\pi_1\cap\pi_2=\varnothing$ and $[A_1,A_2]\leq R_2$. Since $A_2/R_2 $ is finite, there is an open normal subgroup $J$ of $G$ such that $R_2\leq J$ and $A_2\cap J=R_2$. We set $H_3=J\cap H_2$; then $A_2\cap H_3=R_2\cap H_2=R_2$, so property 4 holds for $i=2$. Properties 1--3 also hold for $i=2$ by construction.

We now make the induction step. Suppose subgroups $A_1, \dots , A_{n-1}$, $R_1\leq \cdots\leq R_{n-1}$ and
$H_1\geq \cdots \geq H_{n-1}$ with properties 1--4 have been found.
For brevity we indicate with a bar the images of elements or subgroups in $G/R_{n-1}$. Since $R_1\leq R_2\leq \cdots$, by properties 1 and 2,  $\overline A_1, \dots ,\overline A_{n-1}$ are pairwise commuting finite subgroups of mutually coprime orders. In particular,
$\overline A=\overline A_1 \cdots \overline A_{n-1}$ is a finite $\pi$-group, where $\pi\subseteq \bigcup_{i=1}^{n-1}\pi_i$. Let $J$ be a normal open subgroup of $G$ containing $R_{n-1}$ such that $\overline J \cap \overline A=1$. Let $H_n=H_{n-1}\cap J$. Since the set $\pi$ is finite, it follows from Theorem~\ref{pro-p-solu} that the subgroup $\overline H_n$ has a finite series of closed characteristic subgroups $\{\overline N_{i}\}$ in which each factor  $\overline N_{i}/\overline N_{i+1}$ is either a pro-$\pi$ or a pro-$\pi'$ group. By Corollary~\ref{prosolu}, $ph(\overline N_{i}/\overline N_{i+1})$ is finite for each pro-$\pi$ factor quotient $\overline N_{i}/\overline N_{i+1}$. Taking into account that $ph(\overline G)= \infty$, we observe that also $ph(\overline H_n)= \infty$. Therefore there exists a pro-$\pi'$ factor  $T=\overline N_{k}/\overline N_{k+1}$ for some index $k$ such that $ph(T)=\infty$. Let $S$ and $R$ be the full pre-images of $\overline N_{k}$ and $\overline N_{k+1}$, so that $S/R$ is isomorphic to~$T$. The finite $\pi$-group $\overline A$ acts by conjugation on the pro-$\pi'$ group $S/R$. By Proposition~\ref{thompson},
$ph(C_{T}(\overline A))=\infty$. We therefore can use Lemma~\ref{sta8} to find a subgroup $A_n$ containing $R_n:=R$ such that
$A_n/R_n \le C_{S/R}(\overline A)$  and $A_n/R_n$ is a finite $\pi'$-group such that $w(A_n/R_n)\neq1$.
It is easy to see that then properties 1--4 are satisfied for $i=n$.
This completes the induction step. Thus, we have constructed
 an increasing chain of normal closed subgroups $1=R_1 \le R_2 \le \cdots$, a decreasing chain of normal open subgroups $G=H_1 \ge H_2 \ge \dots\ge H$, and subgroups $A_i\geq R_i$, $i=1,2,\dots $, with the required properties.

We now set $H=\bigcap_{n=1}^{\infty} H_i$; note that $H$ contains $\bigcup_{n=1}^{\infty} R_n$, since $H_{i+1}\geq R_i$ for every $i$ by property 4. Since $H$ is closed, we can consider the quotient $G/H$, where we use bars to denote images of subgroups. We see that $A_i\cap H=R_i$ for every $i$, since $A_i\cap H_{i+1}=R_i$ and $H_{i+1}\geq H\geq R_i$. Hence $\overline A_i=A_i/R_i$ for all~$i$.
By properties 1--4 it follows that the subgroups $\overline A_i$ pairwise commute, have pairwise coprime orders, and $w(\overline A_i)\ne 1$ for every~$i$.

Therefore the abstract group generated by the subgroups $\overline A_i$, $i=1,2\dots $, is isomorphic to the direct product of the $\overline A_i$. The closure of this direct product in the profinite topology is isomorphic to the Cartesian product of finite groups $\overline A_i$ of mutually coprime orders such that $w(\overline A_i)\neq 1$ for all~$i$. Obviously this group contains a $w$-value of infinite order. This yields a contradiction since all $w$-values in $G$ have finite order. The proof is now complete.
 \end{proof}

We can now finish the proof of Theorem~\ref{main}.

\begin{proof}[Proof of Theorem~\ref{main}.] We have a profinite group $G$ in which all pronilpotent subgroups generated by $w$-values are periodic; we need to show that $w(G)$ is locally finite. Obviously, $G\in\mathscr{Y}$. Recall that finite groups of odd order are soluble by the Feit--Thompson theorem \cite{fei-tho}. Combining this with Theorem~\ref{profi} (applied with $p=2$) we deduce that $G$ has a finite series of closed characteristic subgroups
\begin{equation}\label{e-riad}
G=G_0\geq G_1\geq \dots\geq G_s=1
\end{equation}
in which each factor either is prosoluble or is isomorphic to a Cartesian product of non-abelian finite simple groups. There cannot be infinitely many non-isomorphic non-abelian finite simple groups in a factor of the second kind, since this would give a $w$-value of infinite order. Indeed, by a result of Jones \cite{jones} any infinite family of finite simple groups generates the variety of all groups; therefore the orders of $w$-values cannot be bounded on such an infinite family.
Thus, we can assume in addition that each non-prosoluble factor in \eqref{e-riad} is
 isomorphic to a Cartesian product of \emph{isomorphic} non-abelian finite simple groups.  We use induction on~$s$.

 There is nothing to prove if $s=0$. Let $s\geq 1$. By induction, $w(G_1)$ is locally finite. Passing to the quotient $G/w(G_1)$ we can assume that $G_1$ is soluble. If $G/G_1$ is isomorphic to a Cartesian product of isomorphic non-abelian finite simple groups, then $G/G_1$ is locally finite and the result follows from Lemma~\ref{sta5}.

If $G/G_1$ is prosoluble, then so is $G$, and then $ph(G)$ is finite by Proposition~\ref{pi-prosol}. In this case, $w(G)$ is locally finite by Lemma~\ref{sta6}, as required.

\end{proof}

\section*{Acknowledgement}
The authors thank the referee for careful reading and helpful comments.

\bibliographystyle{line}

\end{document}